\theoremstyle{plain}
\newtheorem{thm}{Theorem}[section]
\newtheorem{prop}[thm]{Proposition}
\newtheorem{lemma}[thm]{Lemma}
\newtheorem{cor}[thm]{Corollary}
\theoremstyle{definition}
\newtheorem{defn}[thm]{Definition}
\newtheorem{rmk}[thm]{Remark}
\newtheorem{ex}[thm]{Example}
\numberwithin{equation}{section}
\newcommand{\g}{\mathfrak{g}}
\newcommand{\h}{\mathfrak{h}}
\newcommand{\n}{\mathfrak{n}}
\newcommand{\q}{\mathfrak{q}}
\newcommand{\gl}{\mathfrak{gl}}
\newcommand{\Z}{\mathbb{Z}}
\newcommand{\C}{\mathbb{C}}
\newcommand{\N}{\mathbb{N}}
\newcommand{\sm}{\mathsf{m}}
\newcommand{\sM}{\mathsf{M}}
\DeclareMathOperator{\Ann}{Ann}
\DeclareMathOperator{\Hom}{Hom}
\DeclareMathOperator{\End}{End}
\DeclareMathOperator{\Spec}{Spec}
\DeclareMathOperator{\MaxSpec}{MaxSpec}
\DeclareMathOperator{\ev}{ev}
\DeclareMathOperator{\Supp}{Supp}
\DeclareMathOperator{\id}{id}
\DeclareMathOperator{\hgt}{ht}
  \newcommand{\comments}[1]{
    \begin{center}
      \parbox{6.5 in}{
        \color{red}
          {\footnotesize \textbf{Comments:} #1}
        \color{black}}
    \end{center}}
  \newcommand{\comments}[1]{}
  \newcommand{\details}[1]{
      \ \\
      \color{OliveGreen}
        {\footnotesize \textbf{Details:} #1}
      \color{black}
      \\
  }
  \newcommand{\details}[1]{}
\begin{document}
%

\title{Equivariant map queer Lie superalgebras}

\author{Lucas Calixto}
\address{L.~Calixto: UNICAMP - IMECC, Campinas - SP - Brazil, 13083-859}
\email{lhcalixto@ime.unicamp.br}
\thanks{The first author was supported by FAPESP grant 2013/08430-4.}

\author{Adriano Moura}
\address{A.~Moura: UNICAMP - IMECC, Campinas - SP - Brazil, 13083-859}
\urladdr{\url{http://www.ime.unicamp.br/~aamoura/}}
\email{aamoura@ime.unicamp.br}
\thanks{The second author was partially supported by CNPq grant 303667-2011/7 and FAPESP grant 2014/09310-5.}

\author{Alistair Savage}
\address{A.~Savage: Department of Mathematics and Statistics, University of Ottawa, Canada}
\urladdr{\url{http://alistairsavage.ca}}
\email{alistair.savage@uottawa.ca}
\thanks{The third author was supported by a Discovery Grant from the Natural Sciences and Engineering Research Council of Canada.}

\begin{abstract}
  An equivariant map queer Lie superalgebra is the Lie superalgebra of regular maps from an algebraic variety (or scheme) $X$ to a queer Lie superalgebra $\q$ that are equivariant with respect to the action of a finite group $\Gamma$ acting on $X$ and $\q$.  In this paper, we classify all irreducible finite-dimensional representations of the equivariant map queer Lie superalgebras under the assumption that $\Gamma$ is abelian and acts freely on $X$.  We show that such representations are parameterized by a certain set of $\Gamma$-equivariant finitely supported maps from $X$ to the set of isomorphism classes of irreducible finite-dimensional representations of $\q$.  In the special case where $X$ is the torus, we obtain a classification of the irreducible finite-dimensional representations of the twisted loop queer superalgebra.
\end{abstract}

\subjclass[2010]{17B65, 17B10}
\keywords{Lie superalgebra, queer Lie superalgebra, loop superalgebra, equivariant
map superalgebra, finite-dimensional representation, finite-dimensional module.}


\maketitle
\thispagestyle{empty}

\setcounter{tocdepth}{1}

\tableofcontents

%
\section{Introduction}
%

Equivariant map algebras can be viewed as a generalization of (twisted) current algebras and loop algebras.  Namely, let $X$ be an algebraic variety (or, more generally, a scheme) and let $\g$ be a finite-dimensional Lie algebra, both defined over the field of complex numbers. Furthermore, suppose that a finite group $\Gamma$ acts on both $X$ and $\g$ by automorphisms. Then the equivariant map algebra $M(X,\g)^{\Gamma}$ is defined to be the Lie algebra of $\Gamma$-equivariant regular maps from $X$ to $\g$.  Equivalently, consider the induced action of $\Gamma$ on the coordinate ring $A$ of $X$. Then $M(X,\g)^{\Gamma}$ is isomorphic to $(\g\otimes A)^{\Gamma}$, the Lie algebra of fixed points of the diagonal action of $\Gamma$ on $\g\otimes A$.  Recently, the representation theory of equivariant map algebras, either in full generality or in special cases, has been the subject of much research.  We refer the reader to the survey~\cite{erh-sav13} for an overview.

Lie superalgebras are generalizations of Lie algebras and are an important tool for physicists in the study of supersymmetries.  The finite-dimensional simple complex Lie superalgebras were classified by Victor Kac in \cite{kac77}, and the irreducible finite-dimensional representations of the so-called basic classical Lie superalgebras were classified in \cite{kac77} and \cite{Kac78}.  It is thus natural to consider equivariant map superalgebras, where the target Lie algebra $\g$ mentioned above is replaced by a finite-dimensional Lie superalgebra.  In \cite{sav14}, the third author classified the irreducible finite-dimensional representations of $M(X,\g)^\Gamma$ when $\g$ is a basic classical Lie superalgebra, $X$ has a finitely-generated coordinate ring, and $\Gamma$ is an abelian group acting freely on the set of rational points of $X$. These assumptions make much of the theory parallel to the non-super setting.  The goal of the current paper is to move beyond the setting of basic classical Lie superalgebras.  In particular, we address the case where $\g$ is the so-called \emph{queer Lie superalgebra}.  In this case, very little is known about the representation theory of the equivariant map Lie superalgebra, even when $\Gamma$ is trivial or $X$ is the affine plane or torus (the current and loop cases, respectively), although the representations of the corresponding affine Lie superalgebra have been studied in \cite{GS08}.

The queer Lie superalgebra $\q(n)$ was introduced by Victor Kac in \cite{kac77}.  It is a simple subquotient of the Lie superalegbra of endomorphisms of $\C^{n|n}$ that commute with an odd involution (see Remark~\ref{rem:queer-def}).  It is closely related to the Lie algebra $\frak{sl}(n+1)$, in the sense that $\q(n)$ is a direct sum of one even and one odd copy of $\frak{sl}(n+1)$.  Although the queer Lie superalgebra is classical, its properties are quite different from those of the other classical Lie superalgebras.  In particular, the Cartan subalgebra of $\q(n)$ is not abelian.  (Here, and throughout the paper, we use the term \emph{subalgebra} even in the super setting, and avoid the use of the cumbersome term \emph{subsuperalgebra}.)  For this reason, the corresponding theory of weight modules is much more complicated.  The theory requires Clifford algebra methods, since the highest weight space of an irreducible highest weight $\q(n)$-module has a Clifford module structure.  Nevertheless, the theory of finite-dimensional $\q(n)$-modules is well developed (see, for example, \cite{pen86,pen-ser97,gor06}).  It is the fact that the queer Lie superalgebra is similar to the Lie algebra $\mathfrak{sl}(n+1)$ in some ways while, on the other hand, having very different structure and representation theory that explains the special attention this Lie superalgebra has received.

To investigate the representation theory of the Lie superalgebra $\q(n) \otimes A$, where $A$ is a commutative unital associative algebra, the first step is understanding the irreducible finite-dimensional representations of its Cartan subalgebra $\h \otimes A$, where $\h$ is the standard Cartan subalgebra of $\q(n)$.  Therefore, in the current paper, we first give a characterization of the irreducible finite-dimensional $\h\otimes A$-modules (Theorem~\ref{class.cartan.mod}).  Next, we give a characterization of \emph{quasifinite} irreducible highest weight $\q(n)\otimes A$-modules in Theorem~\ref{thm:quasifinite}.  Such modules generalize finite-dimensional modules.  Using these results, we are able to give a complete classification of the irreducible finite-dimensional representations of the equivariant map queer Lie superalgebra in the case that the algebra $A$ is finitely generated and the group $\Gamma$ is abelian and acts freely on $\MaxSpec(A)$.  Our main result, Theorem~\ref{thm:twisted-classification}, is that the irreducible finite-dimensional modules are parameterized by a certain set of $\Gamma$-equivariant finitely supported maps defined on $\MaxSpec(A)$.  In the special cases that $X$ is the torus or affine line, our results yield a classification of the irreducible finite-dimensional representations of the twisted loop queer Lie superalgebra and twisted current queer Lie superalgebra, respectively.

This paper is organized as follows. In Section~\ref{sec:prelim}, we review some results on commutative algebras, associative superalgebras (in particular Clifford algebras) and Lie superalgebras (especially the queer Lie superalgebra).  We introduce the equivariant map Lie superalgebras in Section~\ref{Sec:EMQLS}.  In Section~\ref{sec:cartan-modules}, we give a characterization of the irreducible finite-dimensional representations of the Lie superalgebra $\h\otimes A$.  In Section~\ref{sec:hw-modules}, we define quasifinite and highest weight modules and we give a characterization of the quasifinite modules. In Section~\ref{sec:evaluation}, we introduce evaluation representations and their irreducible products.  These play a key role in our classification. Finally, in Section~\ref{sec:fd-rep-classification}, we classify all the irreducible finite-dimensional modules of $\q(n) \otimes A$ and $(\q(n) \otimes A)^\Gamma$.

\subsection*{Notation}

We let $\Z$ be the ring of integers, $\N$ be the set of nonnegative integers and $\Z_2=\{{\bar 0},{\bar 1}\}$ be the quotient ring $\Z/2\Z$. Vector spaces, algebras, tensor products, etc.\ are defined over the field of complex numbers $\C$ unless otherwise stated.  Whenever we refer to the dimension of an algebra or ideal, we refer to its dimension over $\C$.

\subsection*{Acknowledgements}

The third author would like to thank S.-J.\ Cheng for helpful conversations.

%
\section{Preliminaries}\label{sec:prelim}
%

\subsection{Associative superalgebras} \label{subsec:assoc-alg}

We collect here some results that will be used in the sequel.  Let $A$ denote a commutative associative unital algebra and let $\MaxSpec(A)$ be the set of all maximal ideals of $A$.

\begin{defn}[$\Supp(I)$] \label{def:ideal-support}
  The \emph{support} of an ideal $I\subseteq A$ is defined to be the set
  \[
    \Supp(I)=\{\sm \in {\MaxSpec(A)\,|\,I\subseteq \sm \}}.
  \]
\end{defn}

A proof of the following lemma can be found, for instance, in \cite[\S 2.1]{sav14}.

\begin{lemma} \label{lem:assoc-alg-facts} Let $I$ and $J$ be ideals of $A$. Then,
  \begin{enumerate}
    \item \label{lem-item:finite-codim-finite} If $I$ is of finite codimension, then $\Supp(I)$ is finite.

    \item \label{lem-item:finite-finite-codim} If $A$ is finitely generated and $I$ has finite support, then $I$ is of finite codimension in $A$.

    \item \label{lem-item:product-intersection} If $I$ and $J$ have disjoint supports, then $IJ = I \cap J$.

    \item \label{lem-item:Noetherian-power-radical} If $A$ is Noetherian, then every ideal of $A$ contains a power of its radical.
  \end{enumerate}
\end{lemma}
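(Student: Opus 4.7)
The plan is to assemble four essentially standard facts from commutative algebra, and I would tackle them in the order (a), (d), (c), (b), since (b) is the deepest and relies on (d). The one nontrivial input beyond general ring theory is Hilbert's Nullstellensatz, which enters in (b) to identify the radical of $I$ with the intersection of the maximal ideals containing $I$.

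For (a), pass to $A/I$: by hypothesis it is finite-dimensional, hence Artinian, and the correspondence theorem identifies $\Supp(I)$ with $\MaxSpec(A/I)$; an Artinian ring has only finitely many maximal ideals. For (d), use Noetherianness to write $\rad(I) = (x_1, \dots, x_k)$ with $x_i^{n_i} \in I$, and note that $\rad(I)^N$ is spanned by monomials $x_1^{a_1}\cdots x_k^{a_k}$ with $a_1 + \cdots + a_k = N$; taking $N = \sum_i (n_i - 1) + 1$ forces some $a_i \geq n_i$ by pigeonhole, so every such monomial lies in $I$.

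For (c), the containment $IJ \subseteq I \cap J$ is automatic. Conversely, disjoint supports force $I + J = A$: otherwise $I + J$ would sit inside some maximal ideal $\sm$, placing $\sm \in \Supp(I) \cap \Supp(J) = \emptyset$, a contradiction. Writing $1 = i + j$ with $i \in I$, $j \in J$, any $x \in I \cap J$ satisfies $x = xi + xj \in IJ$ (using commutativity of $A$ to identify $JI$ with $IJ$).

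The main work is in (b). Since $A$ is finitely generated over $\C$ it is Noetherian, so (d) applies. It therefore suffices to show $\rad(I)$ has finite codimension: once this is established, the filtration $A \supseteq \rad(I) \supseteq \rad(I)^2 \supseteq \cdots \supseteq \rad(I)^N \supseteq I$ (for $N$ as in (d)) has successive quotients that are finitely generated $A$-modules annihilated by $\rad(I)$, hence finitely generated $A/\rad(I)$-modules, hence finite-dimensional. To compute $\rad(I)$, I would invoke the Nullstellensatz: since $A$ is a finitely generated $\C$-algebra and $\C$ is algebraically closed, $\rad(I) = \bigcap_{\sm \in \Supp(I)} \sm$. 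The set $\Supp(I)$ is finite by assumption and distinct maximal ideals are pairwise coprime, so the Chinese Remainder Theorem yields $A/\rad(I) \cong \prod_{\sm \in \Supp(I)} A/\sm \cong \C^{|\Supp(I)|}$, which is indeed finite-dimensional. The subtlety worth flagging is that the Nullstellensatz is indispensable here: without the finite-generation hypothesis on $A$ over $\C$, the radical need not coincide with the intersection of the maximal ideals in $\Supp(I)$, and the whole chain of reasoning for (b) collapses.
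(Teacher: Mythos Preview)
Your proof is correct and follows the standard commutative-algebra arguments for each part. The paper itself does not supply a proof of this lemma at all: it simply states that a proof can be found in \cite[\S2.1]{sav14}, so there is no in-paper argument to compare against. Your write-up is more detailed than what the cited reference likely contains, and your observation that the Nullstellensatz (or more precisely the fact that finitely generated $\C$-algebras are Jacobson, so that $\rad(I)$ coincides with the intersection of maximal ideals above $I$) is the essential non-elementary input for part~(b) is exactly the right point to emphasize.
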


Now let $V = V_{\bar 0}\oplus V_{\bar 1}$ be a $\Z_2$-graded vector space. The parity of a homogeneous element $v\in V_i$ will be denoted by $|v|=i$, $i\in \Z_2$. An element in $V_{\bar 0}$ is called \emph{even}, while an element in $V_{\bar 1}$ is called \emph{odd}.  A \emph{subspace} of $V$ is a $\Z_2$-graded vector space $W=W_{\bar 0}\oplus W_{\bar 1}\subseteq V$ such that $W_i\subseteq V_i$ for $i\in \Z_2$.  We denote by $\C^{m|n}$ the vector space $\C^m \oplus \C^n$, where the first summand is even and the second is odd.

An \emph{associative superalgebra} $A$ is a $\Z_2$-graded vector space $A=A_{\bar 0}\oplus A_{\bar 1}$ equipped with a bilinear associative multiplication (with unit element) such that $A_iA_j\subseteq A_{i+j}$, for $i,j\in \Z_2$. A homomorphism between two superalgebras $A$ and $B$ is a map $g \colon A \rightarrow B$ which is a homomorphism between the underlying algebras, and, in addition, $g(A_i)\subseteq B_i$ for $i\in \Z_2$.  The tensor product $A\otimes B$ is the superalgebra whose vector space is the tensor product of the vector spaces of $A$ and $B$, with the induced $\Z_2$-grading and multiplication defined by $(a_1\otimes b_1)(a_2\otimes b_2)=(-1)^{|a_2||b_1|}a_1a_2\otimes b_1b_2$, for homogeneous elements $a_i\in A$, and $b_i\in B$. An $A$-module $M$ is always understood in the $\Z_2$-graded sense, that is, $M=M_{\bar 0}\oplus M_{\bar 1}$ such that $A_iM_j\subseteq M_{i+j}$, for $i,j\in\Z_2$. Subalgebras and ideals of superalgebras are $\Z_2$-graded subalgebras and ideals. A superalgebra that has no nontrivial ideal is called \emph{simple}.  A homomorphism between $A$-modules $M$ and $N$ is a linear map $f \colon M \rightarrow N$ such that $f(xm)=xf(m)$, for all $x\in A$ and $m\in M$. A homomorphism is of degree $|f|\in \Z_2$, if $f(M_i)\subseteq N_{i+|f|}$ for $i\in \Z_2$.

We denote by $M(m|n)$ the superalgebra of complex matrices in $m|n$-block form
\[
  \left(\begin{array}{r|r}
    A & B  \\
    \hline
    C & D \\
  \end{array}\right),
\]
whose even subspace consists of the matrices with $B=0$ and $C=0$, and whose odd subspace consists of the matrices with $A=0$ and $D=0$.  If $V=V_{\bar 0}\oplus V_{\bar 1}$ is a $\Z_2$-graded vector space with $\dim V_{\bar 0}=m$ and $\dim V_{\bar 1}=n$, then the endomorphism superalgebra $\End(V)$ is the associative superalgebra of endomorphisms of $V$, where
\[
  \End(V)_{i}=\{ T\in \End(V)\ |\ T(V_j)\subseteq V_{i+j},\ j \in \Z_2 \},\quad i \in \Z_2.
\]
Note that fixing ordered bases for $V_{\bar 0}$ and $V_{\bar 1}$ gives an isomorphism between $\End(V)$ and $M(m|n)$.

For $m \ge 1$, let $P\in M(m|m)$ be the matrix
\[
  \left(\begin{array}{r|r}
    0 & I_m  \\
    \hline
    -I_m & 0 \\
  \end{array}\right),
\]
and define $Q(m)_i := \{T\in M(m|m)_i\ |\ TP=(-1)^i PT\}$, for $i\in \Z_2$. Then $Q(m)=Q(m)_{\bar 0}\oplus Q(m)_{\bar 1}$ is the subalgebra of $M(m|m)$ consisting of matrices of the form
\begin{equation}\label{matrixqueer}
  \left(\begin{array}{r|r}
    A & B  \\
    \hline
    B & A \\
  \end{array}\right),
\end{equation}
where $A$ and $B$ are arbitrary $m\times m$ matrices.

\begin{thm}[{\cite[p.~94]{chewan12}}] \label{thm:simplemod}
  Consider $\C^{m|n}$ as an $M(m|n)$-module via matrix multiplication. Then the unique irreducible finite-dimensional module, up to isomorphism, of $M(m|n)$ $($resp. $Q(m))$ is $\C^{m|n}$ $($resp. $\C^{m|m})$.
\end{thm}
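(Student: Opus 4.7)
The plan is to establish for each algebra first that the natural module is irreducible, and then that any simple finite-dimensional super module is isomorphic to it. For irreducibility of $\C^{m|n}$ under $M(m|n)$, given a nonzero vector $v=\sum_k c_k e_k$ with $c_j\neq 0$ in the standard basis, the matrix unit $c_j^{-1}E_{ij}$ sends $v$ to $e_i$, so $v$ generates the whole module. For irreducibility of $\C^{m|m}$ under $Q(m)$, the even subalgebra $Q(m)_{\bar 0}$ (the diagonal copy $A\mapsto\mathrm{diag}(A,A)$ of $\gl(m)$) acts as the standard $\gl(m)$-representation on each of $(\C^{m|m})_{\bar 0}$ and $(\C^{m|m})_{\bar 1}$, and the odd element $\xi$ gotten from~\eqref{matrixqueer} by taking $A=0$, $B=I$ interchanges the two summands; hence any $\Z_2$-graded submodule is either zero or the whole space.

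For uniqueness in the $M(m|n)$ case, let $V$ be a simple finite-dimensional super module. Each diagonal matrix unit $E_{ii}$ is an even primitive idempotent with $E_{ii}M(m|n)E_{ii}=\C E_{ii}$, and $M(m|n)E_{ii}$---the $i$-th column of the matrix algebra---is isomorphic as a super module to $\C^{m|n}$ when $i\leq m$ and to its parity reversal when $i>m$. Since the $E_{ii}$ sum to $1_V$, some $E_{ii}V\neq 0$, and then any nonzero vector there generates $V$ by simplicity, yielding a surjection $M(m|n)E_{ii}\twoheadrightarrow V$ of simple super modules which must be an isomorphism; so $V$ is isomorphic to $\C^{m|n}$, up to parity.

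For uniqueness in the $Q(m)$ case, let $V$ be simple and finite-dimensional. The assignment sending the matrix in~\eqref{matrixqueer} to $(A+B,A-B)\in M_m(\C)\oplus M_m(\C)$ identifies $Q(m)$, as an ungraded algebra, with $M_m(\C)\oplus M_m(\C)$; pulled back to the diagonally embedded $\gl(m)\subset Q(m)_{\bar 0}$, each ungraded simple summand becomes the standard representation. Consequently $V_{\bar 0}\cong U_{\bar 0}\otimes\C^m$ and $V_{\bar 1}\cong U_{\bar 1}\otimes\C^m$ as $\gl(m)$-modules for multiplicity spaces $U_{\bar 0},U_{\bar 1}$. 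The odd element $\xi$ commutes with $Q(m)_{\bar 0}$ and satisfies $\xi^2=1$, so it furnishes a $\gl(m)$-equivariant bijection $V_{\bar 0}\to V_{\bar 1}$; by Schur's lemma on the second factor, $\xi=\phi\otimes\id$ for a bijection $\phi\colon U_{\bar 0}\to U_{\bar 1}$. Identifying $U_{\bar 0}$ with $U_{\bar 1}$ via $\phi$ and calling the common multiplicity space $U$, and using that every odd element of $Q(m)$ equals $\xi$ times an even one, one obtains $V\cong U\otimes\C^{m|m}$ with $Q(m)$ acting only on the second factor. Simplicity forces $\dim U=1$, and so $V\cong\C^{m|m}$.

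The main obstacle is uniqueness in the $Q(m)$ case: unlike $M(m|n)$, the algebra $Q(m)$ is not a matrix superalgebra, and its even part is only a single copy of $\gl(m)$, so one cannot finish via a primitive idempotent whose corner algebra is $\C$. The decisive structural ingredient is the odd square root of the identity $\xi$, whose interaction with the even $\gl(m)$-action pins down the multiplicity space. Careful tracking of parity is also needed throughout, and the stated uniqueness is to be interpreted modulo the parity reversal functor.
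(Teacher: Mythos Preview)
The paper does not prove this theorem; it simply quotes it from \cite[p.~94]{chewan12}. Your argument is a correct, self-contained proof, so there is no conflict to report.

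A couple of remarks on your write-up. For $M(m|n)$ your idempotent argument is the standard one and is fine; note that the paper's conventions allow nonhomogeneous module morphisms (see the remark following the construction of $V(\lambda)$), so the parity-reversal caveat you raise is already absorbed by the ambient conventions. For $Q(m)$ your route---passing through the ungraded isomorphism $|Q(m)|\cong M_m(\C)\oplus M_m(\C)$ to force the $\gl(m)$-isotypic form of $V_{\bar 0}$ and $V_{\bar 1}$, and then using the odd central involution $\xi$ together with Schur to pin down the multiplicity space---is clean. One small clarification worth adding: the step ``$\xi=\phi\otimes\id$'' is most transparently justified by taking $U_i=\Hom_{\gl(m)}(\C^m,V_i)$ so that $V_i\cong U_i\otimes\C^m$ canonically, whereupon post-composition with $\xi$ gives exactly $\phi$; otherwise the tensor decomposition is basis-dependent and the claim needs a word. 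With that in place, the identity $(0,B)=\xi\cdot(B,0)$ indeed determines the full $Q(m)$-action and yields $V\cong U\otimes\C^{m|m}$.
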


For an associative superalgebra $A$, we shall denote by $|A|$ the underlying (i.e.\ ungraded) algebra. Denote by $Z(|A|)$ the center of $|A|$.  Note that $Z(|A|)=Z(|A|)_{\bar 0}\oplus Z(|A|)_{\bar 1}$, where $Z(|A|)_{i}=Z(|A|)\cap A_i$, for $i\in \Z_2$.

\begin{thm}[{\cite[Th.~3.1]{chewan12}}] \label{thm:simplealg}
  Let $A$ be a finite-dimensional simple associative superalgebra.
  \begin{enumerate}
    \item If $Z(|A|)_{\bar 1}=0$, then $A$ is isomorphic to $M(m|n)$, for some $m$ and $n$.
    \item If $Z(|A|)_{\bar 1}\ne 0$, then $A$ is isomorphic to $Q(m)$, for some $m$.
  \end{enumerate}
\end{thm}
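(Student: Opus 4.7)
The plan is to proceed via a super-analogue of the Wedderburn--Artin theorem, adapting the classical strategy to the $\Z_2$-graded setting. First, I would show that $A$ admits a faithful simple $\Z_2$-graded module $V$ (for instance, a minimal graded left ideal, using simplicity of $A$ for faithfulness). A graded version of Schur's lemma then gives that $D := \End_A(V)$ is a \emph{super-division algebra}, meaning every nonzero homogeneous element is invertible. A super density argument (the $\Z_2$-graded analogue of the Jacobson density theorem) then yields $A \cong \End_D(V)$ as associative superalgebras.

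Second, I would classify the finite-dimensional super-division algebras over $\C$. The even part $D_{\bar 0}$ is an ordinary finite-dimensional division algebra over $\C$, hence $D_{\bar 0} = \C$. If $D_{\bar 1} = 0$, then $D = \C$. Otherwise, any nonzero $\xi \in D_{\bar 1}$ satisfies $\xi^2 \in D_{\bar 0}^\times = \C^\times$; after rescaling we may assume $\xi^2 = 1$, so that $D = \C \oplus \C \xi$.

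Third, I would analyze the two cases and match them with the stated dichotomy. If $D = \C$, then $A \cong \End_\C(V)$; choosing homogeneous bases of $V_{\bar 0}$ and $V_{\bar 1}$ of dimensions $m$ and $n$ identifies $A$ with $M(m|n)$, whose underlying center is $\C \cdot I$, purely even, giving $Z(|A|)_{\bar 1} = 0$. If $D = \C \oplus \C \xi$, then the odd invertible element $\xi$ acts on $V$ as an odd automorphism, which forces $\dim V_{\bar 0} = \dim V_{\bar 1} =: m$; choosing a basis of $V$ so that $\xi$ acts (up to rescaling) by the matrix $P$ from the excerpt identifies $\End_D(V)$ with the subalgebra of $M(m|m)$ consisting of matrices $T$ satisfying $TP = (-1)^{|T|} PT$, i.e., with $Q(m)$. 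The element of $Q(m)$ corresponding to $\xi$, namely the block matrix with $A = 0$ and $B = I_m$, is odd, and a direct computation shows it is central in the underlying algebra, so $Z(|A|)_{\bar 1} \ne 0$.

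The main obstacle is carrying out the super-Wedderburn step rigorously: one must verify that the standard density/Schur arguments go through after keeping careful track of parities (in particular, homogeneous endomorphisms of odd degree need not be diagonalizable, so the reduction to a super-division algebra must be done homogeneous-component by homogeneous-component). An alternative route avoiding super-Wedderburn is to work inside the idempotent decomposition of the semisimple algebra $A_{\bar 0}$ and then determine how odd elements of $A$ permute and combine the resulting simple summands; this recovers the two families $M(m|n)$ and $Q(m)$ by hand but is more bookkeeping-intensive.
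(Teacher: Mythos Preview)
The paper does not prove this statement at all: it is quoted as Theorem~3.1 of Cheng--Wang with no argument given, so there is no ``paper's own proof'' to compare against. Your outline is the standard super-Wedderburn--Artin approach and is essentially how the result is established in the cited reference; the sketch is correct in its broad strokes, including the classification of complex super-division algebras and the identification of $\End_D(V)$ with $M(m|n)$ or $Q(m)$ in the two cases.
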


\begin{defn}[Clifford algebra]
  Let $V$ be a finite-dimensional vector space and $f \colon V \times V \rightarrow \C$ be a symmetric bilinear form. We call the pair $(V,f)$ a \emph{quadratic pair}. Let $J$ be the ideal of the tensor algebra $T(V)$ generated by the elements
  \[
    x \otimes x-f(x,x)1,\quad x \in V,
  \]
  and set $C(V,f) := T(V)/J$.  The algebra $C(V,f)$ is called the \emph{Clifford algebra} of the pair $(V,f)$ over $\C$.
\end{defn}

\begin{rmk}[{\cite[Ch.~12, Def.~4.1 and Th.~4.2]{hus94}}]\label{univ-cliff-prop}
  For a quadratic pair $(V,f)$, there exists a linear map $\theta \colon V \rightarrow C(V,f)$ such that the pair $(C(V,f),\theta)$ has the following universal property: For all linear maps $u \colon V \to A$ such that $u(v)^2 = f(v,v) 1_A$ for all $v \in V$, where $A$ is a unital algebra, there exists a unique algebra homomorphism $u' \colon C(V,f) \to A$ such that $u'\theta = u$.
\end{rmk}

Clifford algebras also have a natural superalgebra structure. Indeed, $T (V)$ possesses a $\Z_2$-grading (by even and odd tensor powers) such that $J$ is homogeneous, so the grading descends to $C(V,f)$.  Thus, the resulting superalgebra $C(V,f)$ is sometimes called the \emph{Clifford superalgebra}.

\begin{lemma}[{\cite[Th.~A.3.6]{mus12}}] \label{lem:simpleclif}
  Let $(V,f)$ be a quadratic pair with $f$ nondegenerate. Then $C(V,f)$ is a simple associative superalgebra.
\end{lemma}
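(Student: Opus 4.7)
The plan is to fix an orthogonal basis of $V$ and show that any nonzero $\Z_2$-graded ideal $I\subseteq C(V,f)$ must contain a unit. Since $f$ is nondegenerate and we work over $\C$, I diagonalize $f$ and rescale to get a basis $e_1,\dots,e_n$ of $V$ with $f(e_i,e_j)=\delta_{ij}$. By the universal property (Remark~\ref{univ-cliff-prop}), $C(V,f)$ is then generated by $e_1,\dots,e_n$ subject to $e_ie_j+e_je_i=2\delta_{ij}$. The ordered monomials $e_S:=e_{i_1}\cdots e_{i_k}$, indexed by subsets $S=\{i_1<\cdots<i_k\}\subseteq\{1,\dots,n\}$, span $C(V,f)$; each is homogeneous of parity $|S|\bmod 2$, and each $e_i$ is a unit with $e_i^{-1}=e_i$, hence so is each $e_S$. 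At this point I also want to know the $e_S$ are linearly independent so that the basis expansion $\sum_S a_Se_S$ is unambiguous; this is the standard fact $\dim C(V,f)=2^n$, obtained by exhibiting any $2^n$-dimensional model and invoking the universal property.

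The key computation — and the main bookkeeping obstacle — is to verify by a direct anticommutation count that conjugation by $e_i$ acts diagonally on the spanning set, with
\[
  e_i\,e_S\,e_i^{-1}=\begin{cases}(-1)^{|S|-1}e_S,&i\in S,\\ (-1)^{|S|}e_S,&i\notin S.\end{cases}
\]
For a homogeneous $x=\sum_S a_Se_S\in I$ (so all occurring $S$ have one fixed parity), the two eigenvalues above differ only by a sign. Hence a suitable combination of the elements $x$ and $e_i x e_i^{-1}$, both of which lie in $I$, is an element of $I$ obtained from $x$ by retaining only those $e_S$ with $i\in S$; the opposite combination retains only those with $i\notin S$.

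The proof then finishes by a minimality argument: choose a nonzero homogeneous $x\in I$ whose expansion $\sum a_Se_S$ has the fewest nonzero terms. For each index $i$, one of the two separated elements above must vanish (otherwise each is a shorter nonzero homogeneous element of $I$, contradicting minimality). Thus for every $i$, every surviving $S$ either uniformly contains $i$ or uniformly avoids $i$, which forces a single $S_0$ to survive. Therefore $x=a_{S_0}e_{S_0}$ is a nonzero scalar times a unit, hence invertible, and so $I=C(V,f)$. The only delicate part is the sign bookkeeping in the diagonalization formula above; once that is in hand, the argument is entirely formal.
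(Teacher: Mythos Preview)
Your argument is correct. The paper does not supply its own proof of this lemma; it simply cites \cite[Th.~A.3.6]{mus12} and uses the result as a black box. So there is nothing in the paper to compare against beyond the citation.

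Your approach is the standard direct one: diagonalize the form, use the monomial basis $\{e_S\}$, and exploit the fact that conjugation by each generator $e_i$ is diagonal on this basis with eigenvalues $\pm 1$ separating the conditions $i\in S$ and $i\notin S$. The sign computation $e_i e_S e_i^{-1}=(-1)^{|S|-1}e_S$ for $i\in S$ and $(-1)^{|S|}e_S$ for $i\notin S$ is correct, and since a homogeneous element has all its $|S|$ of one parity, the two eigenvalues are indeed just $\pm 1$, so the projections $\tfrac{1}{2}(x\pm e_ixe_i^{-1})$ lie in $I$ and separate terms as you claim. The minimality argument then forces $x$ to be a single monomial, which is a unit. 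The only point you leave slightly informal is $\dim C(V,f)=2^n$ (needed for linear independence of the $e_S$), but that is standard and you indicate how to obtain it.
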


\begin{rmk}\label{rmksimpleclif}
  It follows from Lemma~\ref{lem:simpleclif}, together with Theorems~\ref{thm:simplemod} and~\ref{thm:simplealg}, that any Clifford superalgebra associated to a nondegenerate pair (i.e.\ the symmetric bilinear form associated to this pair is nondegenerate) has only one irreducible finite-dimensional module up to isomorphism.
\end{rmk}

\subsection{Lie superalgebras}

\begin{defn}[Lie superalgebra]
  A \emph{Lie superalgebra} is a $\Z_2$-graded vector space $\g=\g_{\bar 0}\oplus \g_{\bar 1}$ with bilinear multiplication $[\cdot,\cdot]$ satisfying the following axioms:
  \begin{enumerate}
    \item The multiplication respects the grading: $[\g_i,\g_j] \subseteq \g_{i+j}$ for all $i,j \in \Z_2$.
    \item Skew-supersymmetry: $[a,b]=-(-1)^{|a||b|}[b,a]$, for all homogeneous elements $a,b\in \g$.
    \item Super Jacobi Identity: $[a,[b,c]]=[[a,b],c]+(-1)^{|a||b|}[b,[a,c]]$, for all homogeneous elements $a,b,c\in \g$.
  \end{enumerate}
\end{defn}

\begin{ex}
  Let $A=A_{\bar 0}\oplus A_{\bar 1}$ be an associative superalgebra. We can make $A$ into a Lie superalgebra by letting $[a,b]:=ab-(-1)^{|a||b|}ba$, for all homogeneous $a,b\in A$, and extending $[\cdot,\cdot]$ by linearity. We call this the \emph{Lie superalgebra associated to} $A$. The Lie superalgebra associated to $\End(V)$ (resp. $M(m|n)$) is called the \emph{general linear Lie superalgebra} and is denoted by $\gl(V)$ (resp $\gl(m|n)$).
\end{ex}

Just as for Lie algebras, a finite-dimensional Lie superalgebra $\g$ is said to be \emph{solvable} if $\g^{(n)}=0$ for some $n \geq 0$, where we define inductively $\g^{(0)}=\g$ and $\g^{(n)}=[\g^{(n-1)},\g^{(n-1)}]$ for $n \ge 1$.

\begin{lemma}[{\cite[Prop.~5.2.4]{kac77}}]\label{lem:solvablemod}
  Let $\g=\g_{\bar{0}}\oplus \g_{\bar{1}}$ be a finite-dimensional solvable Lie superalgebra such that $[\g_{\bar{1}},\g_{\bar{1}}]\subseteq [\g_{\bar{0}},\g_{\bar{0}}]$. Then every irreducible finite-dimensional $\g$-module is one-dimensional.
\end{lemma}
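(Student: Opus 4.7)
The plan is to adapt the classical proof of Lie's theorem to the super setting. First, observe that $\g_{\bar 0}$ is solvable as a Lie algebra: an easy induction on $n$ gives $\g_{\bar 0}^{(n)} \subseteq (\g^{(n)})_{\bar 0}$, so $\g^{(n)}=0$ forces $\g_{\bar 0}^{(n)}=0$. Applying the classical Lie theorem to $\g_{\bar 0}$ acting on the finite-dimensional module $V$ produces a weight $\lambda \colon \g_{\bar 0} \to \C$ with nonzero weight space
\[
V_\lambda = \{v \in V : xv = \lambda(x)v \text{ for all } x \in \g_{\bar 0}\},
\]
and simultaneous triangularization shows that $\lambda$ vanishes on $[\g_{\bar 0},\g_{\bar 0}]$. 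The hypothesis $[\g_{\bar 1},\g_{\bar 1}] \subseteq [\g_{\bar 0},\g_{\bar 0}]$ then gives $\lambda([\g_{\bar 1},\g_{\bar 1}]) = 0$ as well.

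The core of the argument is to show that $V_\lambda$ is stable under $\g_{\bar 1}$, after which irreducibility forces $V = V_\lambda$. For $y \in \g_{\bar 1}$ and $w \in V_\lambda$, the identity
\[
x(yw) = [x,y]w + y(xw) = [x,y]w + \lambda(x)(yw), \qquad x \in \g_{\bar 0},
\]
reduces invariance to showing $[x,y]w = 0$. I would establish this via a trace argument in the spirit of the classical Invariance Lemma: consider the finite-dimensional subspace of $V$ spanned by the vectors $(\operatorname{ad} x)^i(y)\cdot w$ for $i \geq 0$, show it is $\g_{\bar 0}$-stable with $x$ acting in triangular form with diagonal entries $\lambda(x)$, and exploit the vanishing of $\lambda$ on $[\g_{\bar 1},\g_{\bar 1}]$ (noting that for $z=[x,y]\in\g_{\bar 1}$ one has $[z,z]\in[\g_{\bar 1},\g_{\bar 1}]$, so $z^{2}$ acts as the scalar $\tfrac12\lambda([z,z])=0$) to force $[x,y]w = 0$.

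Once $V = V_\lambda$ is established, $\g_{\bar 0}$ acts on $V$ entirely by the scalars $\lambda$, so for all $y,y' \in \g_{\bar 1}$ the operators satisfy $yy' + y'y = [y,y'] = \lambda([y,y'])\cdot\id_{V} = 0$. Consequently the associative subalgebra of $\End(V)$ generated by $\g_{\bar 1}$ is a homomorphic image of the exterior algebra $\Lambda(\g_{\bar 1})$, and acts nilpotently on $V$. Now $\g_{\bar 1} V$ is a $\g$-submodule of $V$ — it is $\g_{\bar 0}$-stable by the displayed identity and $\g_{\bar 1}$-stable by the anticommutation relation $yy' = -y'y$ — and is proper by nilpotency, hence zero by irreducibility. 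Thus $\g$ acts on $V$ entirely by scalars, which forces $\dim V = 1$.

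The main obstacle is the invariance step: the classical Invariance Lemma requires the relevant subalgebra to be an ideal, whereas here $\g_{\bar 0}$ is only a subalgebra of $\g$ (one has $[\g_{\bar 0},\g_{\bar 1}]\subseteq\g_{\bar 1}$, not $\g_{\bar 0}$). The hypothesis $[\g_{\bar 1},\g_{\bar 1}] \subseteq [\g_{\bar 0},\g_{\bar 0}]$ is precisely the condition that allows the trace argument to be carried out in the super setting; without it (as in the case $\g=\q(1)_{\bar 0}\oplus\g_{\bar 1}$ with a nontrivial bracket on the odd part), the statement is known to fail.
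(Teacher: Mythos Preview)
The paper does not prove this lemma; it simply records the citation to Kac.  So there is no in-paper proof to compare against, and the relevant question is whether your sketch is complete.

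It is not, and the gap is exactly where you place it.  Your proposed trace argument for the invariance of $V_\lambda$ under $\g_{\bar 1}$ does not work as written.  The subspace $W=\Span\{(\operatorname{ad} x)^i(y)\cdot w : i\ge 0\}$ is stable under the single even operator $x$ (upper-triangular with diagonal entries $\lambda(x)$), but it is \emph{not} $\g_{\bar 0}$-stable: for another $x'\in\g_{\bar 0}$ one gets $x'\cdot(\operatorname{ad} x)^i(y)\,w=[x',(\operatorname{ad} x)^i y]\,w+\lambda(x')(\operatorname{ad} x)^i(y)\,w$, and $[x',(\operatorname{ad} x)^i y]\in\g_{\bar 1}$ need not lie in $\Span\{(\operatorname{ad} x)^j y\}$.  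Even restricting attention to $x$ alone, no commutator identity on $W$ forces the conclusion you want; and the observation that $z^2w=0$ for $z=[x,y]$ certainly does not give $zw=0$.  So the assertion that ``the hypothesis \dots\ is precisely the condition that allows the trace argument to be carried out'' is unsupported: you have identified the right obstruction but not overcome it.

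A route that does work bypasses weight-space invariance altogether.  Lie's theorem for the solvable Lie algebra $\g_{\bar 0}$ shows that every element of $[\g_{\bar 0},\g_{\bar 0}]$ acts nilpotently on $V$.  The hypothesis $[\g_{\bar 1},\g_{\bar 1}]\subseteq[\g_{\bar 0},\g_{\bar 0}]$ then gives $[\g,\g]_{\bar 0}=[\g_{\bar 0},\g_{\bar 0}]+[\g_{\bar 1},\g_{\bar 1}]=[\g_{\bar 0},\g_{\bar 0}]$, so every even element of the ideal $[\g,\g]$ acts nilpotently; every odd $z\in[\g,\g]$ does too, since $z^2=\tfrac12[z,z]\in[\g_{\bar 0},\g_{\bar 0}]$.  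The super Engel theorem applied to the image of $[\g,\g]$ in $\gl(V)$ now produces a nonzero vector annihilated by $[\g,\g]$; the set of such vectors is $\g$-stable because $[\g,\g]$ is an ideal, hence equals $V$ by irreducibility.  At this point $\g$ acts through the abelian superalgebra $\g/[\g,\g]$, and your final paragraph (the exterior-algebra argument showing $\g_{\bar 1}V$ is a proper submodule) applies verbatim to finish.
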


\begin{lemma}[{\cite[Lem.~2.6]{sav14}}] \label{lem:ideal-annihilate-vector}
  Suppose $\g$ is a Lie superalgebra and $V$ is an irreducible $\g$-module such that $\mathfrak{J}v=0$ for some ideal $\mathfrak{J}$ of $\g$ and nonzero vector $v\in V$. Then $\mathfrak{J}V=0$.
\end{lemma}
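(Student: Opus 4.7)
The plan is to show that the annihilator subspace
\[
  W = \{w \in V : \mathfrak{J}w = 0\}
\]
is a nonzero $\g$-submodule of $V$; irreducibility will then force $W = V$, which is exactly the conclusion $\mathfrak{J}V = 0$.

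First I would reduce to the case where the nonzero vector $v$ annihilated by $\mathfrak{J}$ is homogeneous. Write $v = v_{\bar 0} + v_{\bar 1}$ according to the $\Z_2$-grading on $V$. For any homogeneous $y \in \mathfrak{J}_i$ and any $j \in \Z_2$, the product $y v_j$ lies in $V_{i+j}$, so the equation $0 = yv = yv_{\bar 0} + yv_{\bar 1}$ splits into $y v_{\bar 0} = 0$ and $y v_{\bar 1} = 0$ by comparing graded components. Since $\mathfrak{J}$ is homogeneous, this gives $\mathfrak{J}v_{\bar 0} = \mathfrak{J}v_{\bar 1} = 0$, and at least one of $v_{\bar 0}, v_{\bar 1}$ is nonzero. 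So without loss of generality $v$ is homogeneous.

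Next I would check that $W$ is a $\Z_2$-graded subspace of $V$: the argument above applied to an arbitrary $w \in W$ shows its graded pieces also lie in $W$. The heart of the proof is verifying that $W$ is $\g$-stable. Fix homogeneous $x \in \g$, homogeneous $w \in W$, and homogeneous $y \in \mathfrak{J}$. Using the supercommutator identity
\[
  y(xw) = (-1)^{|y||x|} x(yw) + [y,x]w,
\]
the first term vanishes because $yw = 0$, and the second term vanishes because $[y,x] \in \mathfrak{J}$ (since $\mathfrak{J}$ is an ideal) and $w \in W$. Hence $\mathfrak{J}(xw) = 0$, i.e.\ $xw \in W$. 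Therefore $W$ is a graded $\g$-submodule of $V$ containing the nonzero vector $v$, and irreducibility of $V$ forces $W = V$, as desired.

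The only real subtlety, and the main thing to get right, is the graded nature of everything: one must take $v$ to be homogeneous before forming $W$, and use that both $\mathfrak{J}$ and submodules of $V$ are required to be $\Z_2$-graded. Once this bookkeeping is in place, the classical annihilator-ideal argument for Lie algebras goes through verbatim using the super Jacobi/supercommutator identity.
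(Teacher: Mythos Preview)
Your argument is correct and is the standard proof of this fact. Note that the paper does not actually prove this lemma; it simply cites \cite[Lem.~2.6]{sav14}, so there is no ``paper's own proof'' to compare against here.

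One minor redundancy: your initial reduction to a homogeneous $v$ is unnecessary, since the very same computation you use to show $W$ is $\Z_2$-graded already shows that the homogeneous components of the original $v$ lie in $W$, hence $W \ne 0$. Either way the logic is sound.
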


The next two results are super versions of well-known results in representation theory. Namely, the Poincaré-Birkhoff-Witt Theorem (or PBW Theorem) and Schur's Lemma, respectively.

\begin{lemma}[{\cite[Th.~6.1.1]{mus12}}] \label{lem:PBW}
  Let $\g=\g_{\bar{0}}\oplus \g_{\bar{1}}$ be a Lie superalgebra and let $B_0$, $B_1$ be totally ordered bases for $\g_{\bar{0}}$ and $\g_{\bar{1}}$, respectively. Then the monomials
  \[
    u_1\cdots u_rv_1\cdots v_s,\quad u_i\in B_0,\quad v_i\in B_1\quad\text{and}\quad u_1\leq \cdots \leq u_r, \,\, v_1<\cdots < v_s,
  \]
  form a basis of the universal enveloping superalgebra $U(\g)$.
\end{lemma}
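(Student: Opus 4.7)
The plan is to prove the super PBW theorem by the standard two-step route: first establish that the proposed monomials span $U(\g)$, and then establish linear independence via a representation-theoretic model. Throughout, recall that $U(\g) = T(\g)/I$, where $T(\g)$ is the tensor algebra and $I$ is the two-sided ideal generated by the elements $a \otimes b - (-1)^{|a||b|} b \otimes a - [a,b]$ for homogeneous $a, b \in \g$; in particular, in $U(\g)$ one has $v^2 = \tfrac{1}{2}[v,v]$ for any odd $v$.

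For the spanning statement, I would filter $U(\g)$ by the standard increasing filtration $U_n(\g) = \mathrm{image}(\bigoplus_{k\le n}\g^{\otimes k})$ and argue by induction on $n$. Any monomial in basis vectors can be rewritten, modulo $U_{n-1}(\g)$, by successively applying $ab = (-1)^{|a||b|}ba + [a,b]$ to transpose adjacent factors: transpositions between even-even, even-odd, and odd-odd pairs all preserve length modulo lower-order terms. This sorts the even factors into weakly increasing order and the odd factors into some order with respect to the fixed total orderings on $B_0$ and $B_1$. The key extra feature in the super setting is that any repeated odd generator $v$ produces $v \cdot v = \tfrac{1}{2}[v,v] \in U_1(\g)$, so repetitions among odd factors drop the filtration degree; together with the induction hypothesis this yields the strict inequality $v_1 < \cdots < v_s$ for the odd factors.

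For linear independence, the cleanest route is to construct a $\g$-module structure on the candidate ``symbol space'' $M = S(\g_{\bar 0}) \otimes \Lambda(\g_{\bar 1})$, whose distinguished monomials (indexed by exactly the same ordered sequences in $B_0 \sqcup B_1$) form a $\C$-basis by elementary commutative/exterior algebra. Write elements of $M$ as $u_I v_J$, with $I$ a weakly increasing multi-index in $B_0$ and $J$ a strictly increasing tuple in $B_1$, and define an action of each $x \in \g$ on $u_I v_J$ by the naive left-multiplication-then-straightening recipe: prepend $x$, then move it into its correct ordered position using the bracket relations. The content of the theorem is that this recipe is well-defined, i.e.\ gives a representation of $\g$ on $M$. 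Once this is in hand, the universal property of $U(\g)$ furnishes a homomorphism $U(\g) \to \End(M)$ under which the ordered monomials in $U(\g)$ map to the linearly independent ordered monomials in $M$ (applied to the vacuum $1 \in M$), forcing them to be linearly independent in $U(\g)$ as well.

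The main obstacle is verifying that the candidate action on $M$ satisfies the super Jacobi identity (equivalently, that two different ``straightening paths'' for $x \cdot (y \cdot m)$ versus $(-1)^{|x||y|} y \cdot (x \cdot m) + [x,y] \cdot m$ give the same answer). This reduces, by linearity and induction on the length of $m = u_I v_J$, to a finite number of parity cases (even-even, even-odd, odd-odd for the pair $x,y$, and similarly for the position being straightened past); each case is an explicit sign-bookkeeping verification using skew-supersymmetry and the super Jacobi identity for $\g$. The odd-odd case, where one must correctly account for the identity $v^2 = \tfrac{1}{2}[v,v]$ and the resulting collapse of a repeated odd generator into an even element, is where the super version genuinely differs from the classical PBW argument and where the bookkeeping must be done most carefully.
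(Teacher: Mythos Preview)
The paper does not actually prove this lemma; it is stated with a citation to \cite[Th.~6.1.1]{mus12} and no argument is given. Your sketch is the standard two-step proof (spanning via the filtration and straightening, linear independence via a module structure on $S(\g_{\bar 0})\otimes\Lambda(\g_{\bar 1})$) and is correct in outline; this is essentially the approach taken in the cited reference.
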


\begin{lemma}[{\cite[Schur's Lemma, p.~18]{kac77}}] \label{lem:Schur}
  Let $\g$ be a Lie superalgebra and $V$ be an irreducible $\g$-module. Define $\End_\g(V)_{i}:=\{T\in \End(V)_{i}\ |\ [T,\g]=0\}$, for $i\in \Z_2$. Then,
  \[
    \End_\g(V)_{\bar{0}}=\C \id,\quad \End_\g(V)_{\bar{1}}=\C \varphi,
  \]
  where $\varphi=0$ or $\varphi^2=- \id$.
\end{lemma}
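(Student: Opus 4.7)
My plan is to handle the even and odd components of $\End_\g(V)$ separately, with the even part being essentially the classical Schur lemma transplanted to the graded setting, and the odd part reducing to the even case by squaring.

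First, suppose $T \in \End_\g(V)_{\bar 0}$. Since $|T| = \bar 0$, the supercommutator $[T,g] = Tg - gT$ for every homogeneous $g \in \g$, so the condition $[T,\g] = 0$ is just the classical condition that $T$ commutes with the $\g$-action. Being even, $T$ preserves the $\Z_2$-grading, so $\ker T$ and $\im T$ are graded $\g$-submodules of $V$. By irreducibility each is either $0$ or $V$, so $T$ is either zero or a grading-preserving bijection. Invoking the existence of an eigenvalue $\lambda \in \C$ of $T$ (either by finite-dimensionality or by the Dixmier-style argument underlying the classical Schur lemma), the even $\g$-endomorphism $T - \lambda \id$ fails to be invertible, hence is zero. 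Thus $\End_\g(V)_{\bar 0} = \C \id$.

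Next, suppose $T \in \End_\g(V)_{\bar 1}$. For homogeneous $g \in \g$ the defining relation $[T,g] = 0$ becomes $Tg = (-1)^{|g|} g T$, and iterating once gives
\[
  T^2 g = T\bigl((-1)^{|g|} g T\bigr) = (-1)^{|g|}(Tg)T = (-1)^{2|g|} g T^2 = g T^2,
\]
so $T^2 \in \End_\g(V)_{\bar 0}$, and by the previous paragraph $T^2 = \mu \id$ for some $\mu \in \C$. If $\mu = 0$, I would argue that $\ker T$ is still a graded $\g$-submodule, using $T(gv) = (-1)^{|g|} g(Tv)$ for homogeneous $g$; since $T^2 = 0$ forces $\im T \subseteq \ker T$, a nonzero $T$ would make $\ker T$ a proper nonzero graded submodule, contradicting irreducibility. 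Hence $T = 0$ when $\mu = 0$. If $\mu \neq 0$, fix a square root of $-\mu$ and set $\varphi := T/\sqrt{-\mu}$, so that $\varphi^2 = -\id$.

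Finally, I would show that $\End_\g(V)_{\bar 1}$ is at most one-dimensional. Given $\varphi$ as above with $\varphi^2 = -\id$ and any $\psi \in \End_\g(V)_{\bar 1}$, the product $\psi\varphi$ lies in $\End_\g(V)_{\bar 0}$, so $\psi\varphi = \lambda \id$ for some $\lambda \in \C$. Multiplying on the right by $\varphi$ and using $\varphi^2 = -\id$ gives $\psi = -\lambda \varphi \in \C\varphi$, completing the proof. I expect the main delicate point to be the careful bookkeeping of signs in the odd-part computation — in particular verifying that $\ker T$ really is $\Z_2$-graded and that $T^2$ supercommutes with the odd part of $\g$; once those sign identities are in hand, the rest of the argument is a clean reduction to the even case.
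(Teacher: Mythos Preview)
The paper does not give its own proof of this lemma; it simply cites Kac~\cite{kac77}. Your argument is correct and is the standard one: reduce the odd case to the even case by observing that $T^2$ and $\psi\varphi$ are even $\g$-endomorphisms, then invoke the classical Schur lemma. The only point worth flagging is that the eigenvalue step for the even part requires some size hypothesis on $V$ (finite or at most countable dimension over $\C$), which you acknowledge; the paper's statement is silent on this, but every application of the lemma in the paper is to finite-dimensional modules, so the issue is harmless in context.
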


\subsection{The queer Lie superalgebra} \label{subsec:queer}

Recall the superalgebra $Q(m)$ defined in Section~\ref{subsec:assoc-alg}. If $m=n+1$, then the Lie superalgebra associated to $Q(m)$ will be denoted by $\hat{\q}(n)$.  The derived subalgebra $\tilde{\q}(n)=[\hat{\q}(n),\hat{\q}(n)]$ consists of matrices of the form \eqref{matrixqueer}, where the trace of $B$ is zero. Note that $\tilde{\q}(n)$ has a one-dimensional center spanned by the identity matrix $I_{2n+2}$. The \emph{queer Lie superalgebra} is defined to be the quotient superalgebra
\[
  \q(n)=\tilde{\q}(n)/\C I_{2n+2}.
\]
By abuse of notation, we denote the image in $\q(n)$ of a matrix $X\in \tilde{\q}(n)$ again by $X$. The Lie superalgebra $\q(n)$ has even part isomorphic to $\mathfrak{sl}(n+1)$ and odd part isomorphic (as a module over the even part) to the adjoint module.  One can show that $\q(n)$ is simple for $n \geq 2$ (see \cite[\S 2.4.2]{mus12}). From now on, $\q=\q(n)$ where $n\geq 2$.

\begin{rmk} \label{rem:queer-def}
  Some references refer to $\hat \q(n)$ as the queer Lie superalegbra.  However, in the current paper, we reserve this name for the simple Lie superalgebra $\q(n)$.
\end{rmk}

Denote by $N^-$, $H$, $N^+$ the subset of strictly lower triangular, diagonal, and strictly upper triangular matrices in $\mathfrak{sl} (n+1)$, respectively.  We define
\begin{gather*}
  \h_{\bar{0}}=\left\lbrace
  \left(\begin{array}{r|r}
    A & 0  \\
    \hline
    0 & A \\
  \end{array}\right)
  \ \Big|\ A\in H \right\rbrace, \quad
  \h_{\bar{1}}=\left\lbrace
  \left(\begin{array}{r|r}
    0 & B  \\
    \hline
    B & 0 \\
  \end{array}\right)\ \Big|\ B\in H\right\rbrace, \\
  \mathfrak{n}^{\pm}_{\bar{0}}=\left\lbrace
  \left(\begin{array}{r|r}
    A & 0  \\
    \hline
    0 & A \\
  \end{array}\right)\ \Big|\ A\in N^{\pm}\right\rbrace, \quad
  \mathfrak{n}^{\pm}_{\bar{1}}=
  \left\lbrace  \left(\begin{array}{r|r}
    0 & B  \\
    \hline
    B & 0 \\
  \end{array}\right)\ \Big|\ B\in N^{\pm}\right\rbrace,\\
  \h=\h_{\bar{0}}\oplus \h_{\bar{1}},\quad\text{and}\quad \mathfrak{n}^{\pm}=\mathfrak{n}^{\pm}_{\bar 0}\oplus \mathfrak{n}^{\pm}_ {\bar 1}.
\end{gather*}

\begin{lemma}[{\cite[Lem.~2.4.1]{mus12}}]
  We have a vector space decomposition
  \begin{equation} \label{triangdec}
    \q=\mathfrak{n}^{-}\oplus \h\oplus \mathfrak{n}^{+}
  \end{equation}
  such that $\mathfrak{n}^{\pm}$ and $\h$ are graded subalgebras of $\q$ ,with $\mathfrak{n}^{\pm}$ nilpotent. The subalgebra $\h$ is called the \emph{standard Cartan subalgebra} of $\q$.
\end{lemma}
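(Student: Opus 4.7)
The plan is to derive the decomposition from the classical triangular decomposition $\mathfrak{sl}(n+1) = N^- \oplus H \oplus N^+$. Since each of $\q_{\bar 0}$ and $\q_{\bar 1}$ is isomorphic to $\mathfrak{sl}(n+1)$ as a vector space (carried, respectively, by the ``$A$-block'' and the ``$B$-block'' of the matrices in~\eqref{matrixqueer} modulo $\C I_{2n+2}$), applying the classical decomposition separately to each parity and regrouping according to the definitions of $\h$ and $\mathfrak{n}^{\pm}$ yields the vector space equality~\eqref{triangdec} immediately, with each summand $\Z_2$-graded by construction.

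To verify that $\h$ and $\mathfrak{n}^{\pm}$ are subalgebras, I would directly compute the supercommutator of two elements in block form. Writing a homogeneous even element as $X = \begin{pmatrix} A & 0 \\ 0 & A \end{pmatrix}$ and a homogeneous odd element as $Y = \begin{pmatrix} 0 & B \\ B & 0 \end{pmatrix}$, a short multiplication gives
\begin{gather*}
  [X_1,X_2] = \begin{pmatrix} [A_1,A_2] & 0 \\ 0 & [A_1,A_2] \end{pmatrix}, \quad
  [X_1,Y_2] = \begin{pmatrix} 0 & [A_1,B_2] \\ [A_1,B_2] & 0 \end{pmatrix}, \\
  [Y_1,Y_2] = \begin{pmatrix} \{B_1,B_2\} & 0 \\ 0 & \{B_1,B_2\} \end{pmatrix}.
\end{gather*}
Closure of each summand then follows from three elementary facts: (i) $N^{\pm}$ and $H$ are Lie subalgebras of $\mathfrak{sl}(n+1)$; (ii) the space of strictly upper (resp.\ lower) triangular matrices is closed under ordinary matrix multiplication, hence under the anticommutator; and (iii) the anticommutator of two diagonal matrices is diagonal. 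The only subtle point is that in (iii), $\{B_1,B_2\}$ need not be traceless; but its trace part lies in $\C I_{n+1}$, whose image in $\q = \tilde{\q}(n)/\C I_{2n+2}$ is zero, so the remainder lies in $\h_{\bar 0}$.

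For the nilpotency of $\mathfrak{n}^{\pm}$, I would track how far from the diagonal the nonzero entries of an element can occur. If $A_i, B_i$ have nonzero entries only in positions $(j,k)$ with $k - j \geq d_i$, then each of $[A_1,A_2]$, $[A_1,B_2]$ and $\{B_1,B_2\}$ has nonzero entries only for $k - j \geq d_1 + d_2$. Iterating brackets inside $\mathfrak{n}^+$ therefore strictly increases the minimal superdiagonal containing a nonzero entry, and after finitely many steps this exceeds $n$, at which point all entries vanish; the argument for $\mathfrak{n}^-$ is identical. The main obstacle is really just the quotient bookkeeping in step (iii); everything else is a routine block-matrix computation.
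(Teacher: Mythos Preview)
Your proof is correct. The paper does not supply its own argument for this lemma; it simply cites \cite[Lem.~2.4.1]{mus12}. Your direct block-matrix verification is the standard route and matches what one finds in that reference: the vector space decomposition is inherited from $\mathfrak{sl}(n+1) = N^- \oplus H \oplus N^+$, closure under the bracket reduces to closure of $N^\pm$ and $H$ under commutators and anticommutators (with the trace adjustment you noted for $[\h_{\bar 1},\h_{\bar 1}]$), and nilpotency follows by tracking the superdiagonal index.
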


We now describe the roots of $\q$ with respect to $\h_{\bar 0}$.  For each $i=1,\dotsc, n+1$, define $\epsilon_i\in\h_{\bar 0}^* $ by
\[
  \epsilon_i
  \left(\begin{array}{r|r}
    h & 0  \\
    \hline
    0 & h
  \end{array}\right)=a_i,
\]
where $h$ is the diagonal matrix with entries $(a_1,\dotsc,a_{n+1})$.  For $1\leq i,j\leq n+1$, we let $E_{i,j}$ denote the $(n+1) \times (n+1)$ matrix with a 1 in position $(i,j)$ and zeros elsewhere, and we set
\[
  e_{i,j}=
  \left(\begin{array}{r|r}
    E_{i,j} & 0  \\
    \hline
    0 & E_{i,j}
  \end{array}\right) \quad \text{and} \quad
  e'_{i,j} =
  \left(\begin{array}{r|r}
    0 & E_{i,j}  \\
    \hline
    E_{i,j} & 0
  \end{array}\right).
\]
Given $\alpha\in\h_{\bar 0}^*$, let
\[
  \q_{\alpha}=\{x\in\q\ |\ [h,x]=\alpha(h)x \text{ for all } h\in\h_{\bar{0}}\}.
\]
We call $\alpha$ a \emph{root} if $\alpha \ne 0$ and $\q_\alpha \neq 0$.  Let $\Delta$ denote the set of all roots.  Note that $\q_0=\h$ and, for $\alpha= \epsilon_i-\epsilon_j$, $1\leq i \neq j\leq n+1$, we have
\[
  \q_{\alpha}= \C e_{i,j}\oplus \C e'_{i,j}.
\]
In particular,
\[
  \q = \bigoplus_{\alpha\in \h_{\bar 0}^*}\q_{\alpha}.
\]
A root is called \emph{positive} (resp.\ \emph{negative}) if $\q_{\alpha} \cap \mathfrak{n}^+\neq 0$ (resp.\ $\q_{\alpha}\cap\mathfrak{n}^-\neq 0$).  We denote by $\Delta^+$  (resp.\ $\Delta^-$) the subset of positive (resp.\ negative) roots.  A positive root $\alpha$ is called \emph{simple} if it cannot be expressed as a sum of two positive roots.  We denote by $\Pi$ the set of simple roots.  Thus,
\begin{gather*}
  \Delta^+=\{\epsilon_i-\epsilon_j\ |\ 1\leq i< j\leq n+1\},\quad
  \Pi=\{\epsilon_i-\epsilon_{i+1}\ |\ 1\leq i\leq n+1\}, \\
  \Delta^- = -\Delta^{+},\quad
  \Delta = \Delta^+ \cup \Delta^-.
\end{gather*}
It follows that
\[
  \mathfrak{n}^{+}=\bigoplus_{\alpha\in\Delta^{+}}\q_{\alpha} \quad \text{and} \quad \mathfrak{n}^{-}=\bigoplus_{\alpha\in\Delta^{-}}\q_{\alpha}.
\]
The subalgebra $\mathfrak{b}=\h\oplus \mathfrak{n}^{+}$ is called the \emph{standard Borel subalgebra} of $\q$.

Notice that, since $n\geq 2$, we have $[\h_{\bar 1},\h_{\bar 1}]=\h_{\bar 0}$. Indeed, for all $i,j\in \{1,\dotsc,n+1\}$ with $i\ne j$, we can choose $k\in\{1,\ldots,n+1\}$ such that $k\ne i$, $k\ne j$, and then
\[
  e_{i,i}-e_{j,j}=\frac{1}{2}[e_{i,i}'-e_{j,j}',e_{i,i}'+e_{j,j}'-2e_{k,k}'].
\]
Thus, the result follows from the fact that elements of the form $e_{i,i}-e_{j,j}$, for $i,j\in\{1,\dotsc,n+1\}$ and $i\ne j$, generate $\h_{\bar 0}$.

\subsection{\texorpdfstring{Irreducible finite-dimensional $\q$-modules}{Irreducible finite-dimensional modules for the queer Lie superalgebra}}

\begin{lemma}[{\cite[Prop.~8.2.1]{mus12}}] \label{lem:simple-h-modules}
  For every $\lambda \in \h_{\bar 0}^*$, there exists a unique irreducible finite-dimensional $\h$-module $W(\lambda)$ such that $hv=\lambda(h)v$, for all $h\in \h_{\bar 0}$ and $v\in W(\lambda)$. Moreover any irreducible finite-dimensional $\h$-module is isomorphic to $W(\lambda)$ for some $\lambda\in \h_{\bar 0}^*$.
\end{lemma}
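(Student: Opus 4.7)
The plan is to use the Clifford-algebra machinery developed in the Preliminaries to reduce the classification to that of modules over a simple superalgebra.  For each $\lambda \in \h_{\bar 0}^*$, the bracket of $\h$ yields a symmetric bilinear form
\[
  f_\lambda\colon\h_{\bar 1}\times\h_{\bar 1}\to\C,\qquad f_\lambda(x,y)=\tfrac{1}{2}\lambda([x,y]),
\]
symmetric because $\h_{\bar 1}$ is purely odd.  On any $\h$-module $W$ on which $\h_{\bar 0}$ acts by $\lambda$, one has $x^2v=\tfrac{1}{2}[x,x]v=f_\lambda(x,x)v$ for $x\in\h_{\bar 1}$, so by the universal property of the Clifford algebra (Remark~\ref{univ-cliff-prop}) the action of $\h_{\bar 1}$ extends to a representation of $C(\h_{\bar 1},f_\lambda)$.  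Letting $\h_{\bar 1}^0$ denote the radical of $f_\lambda$, the induced form $\bar f_\lambda$ on $\h_{\bar 1}/\h_{\bar 1}^0$ is nondegenerate, so by Lemma~\ref{lem:simpleclif} the superalgebra $C(\h_{\bar 1}/\h_{\bar 1}^0,\bar f_\lambda)$ is simple, with a unique irreducible finite-dimensional module by Remark~\ref{rmksimpleclif}.

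For existence, I would choose a maximal isotropic subspace $\h_{\bar 1}^+\subseteq\h_{\bar 1}$ containing $\h_{\bar 1}^0$.  The direct computation $[\h_{\bar 0},\h_{\bar 1}]=0$ (both consist of diagonal matrices) combined with $[\h_{\bar 1},\h_{\bar 1}]\subseteq\h_{\bar 0}$ shows that $\bar\h:=\h_{\bar 0}\oplus\h_{\bar 1}^+$ is a subalgebra of $\h$, and the isotropy of $\h_{\bar 1}^+$ guarantees that the one-dimensional space $\C_\lambda$ with $\h_{\bar 0}$ acting by $\lambda$ and $\h_{\bar 1}^+$ acting by zero is a well-defined $\bar\h$-module.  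Setting
\[
  W(\lambda):=U(\h)\otimes_{U(\bar\h)}\C_\lambda,
\]
the PBW Theorem (Lemma~\ref{lem:PBW}) gives $\dim W(\lambda)=2^{\dim\h_{\bar 1}-\dim\h_{\bar 1}^+}$.

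To show $W(\lambda)$ is irreducible, I would first verify that $\h_{\bar 1}^0$ acts as zero on $W(\lambda)$: the cyclic generator $v_\lambda$ is annihilated by construction, and for $x\in\h_{\bar 1}^0$ the Clifford relation $xz+zx=2f_\lambda(x,z)\id=0$ for every $z\in\h_{\bar 1}$ allows one to move $x$ past any monomial in $\h_{\bar 1}$ acting on $v_\lambda$.  Consequently the $\h$-action factors through the simple superalgebra $C(\h_{\bar 1}/\h_{\bar 1}^0,\bar f_\lambda)$, and a dimension count (using Theorems~\ref{thm:simplemod} and~\ref{thm:simplealg} to compute the dimension of the unique irreducible Clifford module, which equals $2^{\lceil k/2\rceil}$ for $k=\dim\h_{\bar 1}/\h_{\bar 1}^0$) matches $\dim W(\lambda)$, showing that $W(\lambda)$ is precisely that unique irreducible.

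For uniqueness, let $W$ be any irreducible finite-dimensional $\h$-module.  Since $[\h_{\bar 0},\h]=0$, the central subalgebra $\h_{\bar 0}$ acts by scalars on $W$ by Schur's Lemma, defining some $\lambda\in\h_{\bar 0}^*$, and as above the action lifts to a representation $\rho$ of $C(\h_{\bar 1},f_\lambda)$.  The key step is showing $\rho(\h_{\bar 1}^0)=0$: for $x\in\h_{\bar 1}^0$, the operator $\rho(x)$ is odd and, by the Clifford relation applied to the vanishing pairing $f_\lambda(x,\cdot)$, supercommutes with all of $\rho(\h)$, hence lies in $\End_\h(W)_{\bar 1}$; Schur's Lemma (Lemma~\ref{lem:Schur}) forces $\rho(x)=c\varphi$ with $\varphi=0$ or $\varphi^2=-\id$, while $\rho(x)^2=f_\lambda(x,x)\id=0$ yields $c=0$.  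Hence $W$ is an irreducible module over the simple Clifford superalgebra $C(\h_{\bar 1}/\h_{\bar 1}^0,\bar f_\lambda)$ and so $W\cong W(\lambda)$ by Remark~\ref{rmksimpleclif}.  The delicate point I expect is precisely this Schur-type argument that the radical acts trivially, since it requires carefully threading the super parity through both the Clifford relations and the odd part of Schur's Lemma; the dimension-count justification of irreducibility of the induced module $W(\lambda)$ is a secondary technicality.
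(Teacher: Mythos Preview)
The paper does not give its own proof of this lemma; it is simply cited from \cite[Prop.~8.2.1]{mus12}.  Your Clifford-algebra argument is correct and in fact anticipates exactly the method the paper deploys later in Theorem~\ref{class.cartan.mod} for the more general $\h\otimes A$ case: form the symmetric bilinear form $f_\lambda$ (there $f_\psi$) on the odd part, pass to the quotient by its radical, identify the result with a Clifford superalgebra on a nondegenerate pair, and invoke Lemma~\ref{lem:simpleclif} and Remark~\ref{rmksimpleclif} for existence and uniqueness of the irreducible, using the super Schur Lemma to kill the radical.  So your approach is essentially the paper's (and Musson's) argument specialised to $A=\C$.
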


Let $V$ be an irreducible finite-dimensional $\q$-module. For $\mu\in \h_{\bar 0}^*$, let
\[
  V_\mu = \{v\in V\ |\  hv=\mu(h)v \text{ for all } h\in\h_{\bar{0}}\}\subseteq V
\]
be the $\mu$-weight space of $V$.  Since $\h_{\bar 0}$ is an abelian Lie algebra and the dimension of $V$ is finite, we have $V_\mu\neq 0$ for some $\mu\in \h_{\bar 0}^*$.  We also have $\q_\alpha V_\mu\subseteq V_{\mu+\alpha}$, for all $\alpha\in \Delta$.  Then, by the simplicity of $V$, we have the weight space decomposition
\[
  V= \bigoplus_{\mu\in \h_{\bar{0}}^*}V_{\mu}.
\]
Since $V$ has finite dimension, there exists $\lambda \in \h_{\bar 0}^*$ such that $V_\lambda \ne 0$ and $\q_{\alpha}V_\lambda=0$ for all $\alpha\in \Delta^+$.  Since $[\h_{\bar{0}},\h]=0$, each weight space is an $\h$-submodule of $V$. If $W$ is an irreducible $\h$-submodule of $V_\lambda$, then $W\cong W(\lambda)$ by Lemma~\ref{lem:simple-h-modules}.  Now, the irreducibility of $V$ together with the PBW Theorem (Lemma~\ref{lem:PBW}), implies that
\[
  V_\lambda \cong W(\lambda) \quad\text{and} \quad U(\mathfrak{n}^-)V_\lambda=V.
\]
In particular, this shows that any irreducible finite-dimensional $\q$-module is a highest weight module, where the highest weight space is an irreducible $\h$-module.  On the other hand, given an irreducible finite-dimensional $\h$-module $W(\lambda)$, we can consider the Verma type module associated to it. Namely, regard $W(\lambda)$ as a $\mathfrak{b}$-module, where $\n^+W(\lambda)=0$, and consider the induced $\q$-module $U(\q)\otimes_{U(\mathfrak{b})}W(\lambda)$.  This module has a unique proper maximal submodule which we denote by $N(\lambda)$.  Define $V(\lambda) = (U(\q)\otimes_{U(\mathfrak{b})}W(\lambda))/N(\lambda)$.  Then $V(\lambda)$ is an irreducible $\q$-module and every weight of $V(\lambda)$ is of the form
\[
  \lambda -\sum_{\alpha\in \Pi}m_\alpha \alpha,\quad m_\alpha \in \N \text{ for all } \alpha \in \Pi.
\]

\begin{rmk}
  It is important to note here that we allow homomorphisms of modules to be nonhomogeneous.  If we were to require such homomorphisms to be purely even, the Clifford algebra associated to a nondegenerate pair could have two irreducible representations (see Remark~\ref{rmksimpleclif}) and $\mathfrak{q}(n)$ could have two irreducible highest weight representations of a given highest weight.
\end{rmk}

Let $P(\lambda)=\{\mu\in\h_{\bar 0}^*\ |\ V(\lambda)_\mu \ne 0\}$. We will fix the partial order on $P(\lambda)$ given by $\mu_1\geq \mu_2$ if and only if $\mu_1-\mu_2\in Q^+$, where $Q^+ := \sum_{\alpha \in \Pi} \N \alpha$ denotes the positive root lattice of $\q$.

\section{Equivariant map queer Lie superalgebras}\label{Sec:EMQLS}

In this section we introduce our main objects of study: the \emph{equivariant map queer Lie superalgebras}. Henceforth, we let $A$ denote a commutative associative unital algebra and $\q=\q(n)$, with $n \ge 2$.

Consider the Lie superalgebra $\q\otimes A$ where the $\Z_2$-grading is given by $(\q\otimes A)_j=\q_j\otimes A, j\in\Z_{2}$, and the bracket is determined by
\[
  [x_{1}\otimes f_{1},x_{2}\otimes f_{2}]=[x_{1},x_{2}] \otimes f_{1}f_{2},\quad x_{i}\in\q,\ f_{i}\in A,\ i\in \{1,2\}.
\]
We will refer to a Lie superalgebra of this form as a \emph{map queer Lie superalgebra}, inspired by the case where $A$ is the ring of regular functions on an algebraic variety.

\begin{defn}[Equivariant map queer Lie superalgebra]
  Let $\Gamma$ be a group acting on $A$ and on $\q$ by automorphisms.  Then $\Gamma$ acts diagonally on $\q\otimes A$.  We define
  \[
    (\q\otimes A)^\Gamma = \{z \in \q\otimes A\ |\ \gamma z=z \text{ for all } \gamma \in \Gamma\}
  \]
  to be the Lie subalgebra of $\q \otimes A$ of points fixed under the action of $\Gamma$. We call $(\q\otimes A)^\Gamma$ an \emph{equivariant map queer Lie superalgebra}.  Note that if $\Gamma$ is the trivial group, then $(\q\otimes A)^\Gamma=\q\otimes A$.
\end{defn}

\begin{ex}[Multiloop queer superalgebras]
  Let $n,m_1,\dotsc, m_k$ be positive integers and consider the group
  \[
    \Gamma=\langle \gamma_1,\dotsc,\gamma_k\ |\ \gamma_i^{m_i}=1,\ \gamma_i\gamma_j=\gamma_j\gamma_i,\ \forall\ 1\leq i,j\leq k\rangle \cong \bigoplus_{i=1}^k\Z/m_i\Z.
  \]
  An action of $\Gamma$ on $\q$ is equivalent to a choice of commuting automorphisms $\sigma_i$ of $\q$ such that $\sigma_i^{m_i}=\id$, for all $i=1,\dotsc, n$. Let $A=\C[t_1^{\pm },\dotsc,t_k^{\pm}]$ be the algebra of Laurent polynomials in $k$ variables and let $X = \Spec(A)$ (in other words, $X$ is the $k$-torus $(\C^\times)^k$). For each $i=1,\dotsc,k$, let $\xi_i\in \C$ be a primitive $m_i$-th root of unity, and define an action of $\Gamma$ on $X$ by
  \[
    \gamma_i(z_1,\dotsc,z_k)=(z_1,\dotsc,z_{i-1},\xi_iz_i,z_{i+1},\dotsc,z_k).
  \]
  This induces an action on $A$ and we call
  \begin{equation*}\label{llopsup}
  M(\q,\sigma_1,\dotsc,\sigma_k,m_1,\dotsc,m_k) := (\q\otimes A)^\Gamma
  \end{equation*}
  the (\emph{twisted}) \emph{multiloop queer superalgebra} relative to $(\sigma_1,\dotsc,\sigma_k)$ and $(m_1,\dotsc, m_k)$. If $\Gamma$ is trivial, we call it an \emph{untwisted multiloop queer superalgebra}.  If $n=1$, then $M(\q,\sigma_1,m_1)$ is called a (\emph{twisted} or \emph{untwisted}) \emph{loop queer superalgebra}.  These have been classified, up to isomorphism, in \cite[Th.~4.4]{GP04}.  This classification uses the fact that the outer automorphism group of $\q$ is isomorphic to $\Z_4$ (see \cite[Th.~1]{Ser84}).
\end{ex}

\begin{defn}[$\Ann_A(V)$, $\Supp(V)$]
  Let $V$ be a $(\q\otimes A)^\Gamma$-module.  We define $\Ann_A(V)$ to be the sum of all $\Gamma$-invariant ideals $I \subseteq A$, such that $(\q\otimes I)^\Gamma V=0$.  If $\rho$ is the associated representation, we set $\Ann_A(\rho):=\Ann_A(V)$.  We define the \emph{support} of $V$ to be the support of $\Ann_A(V)$ (see Definition~\ref{def:ideal-support}).  We say that $V$ has \emph{reduced support} if $\Ann_A(V)$ is a radical ideal.
\end{defn}

\section{Irreducible finite-dimensional representations of the Cartan subalgebra}
\label{sec:cartan-modules}

In this section we study irreducible finite-dimensional $\h\otimes A$-modules. The goal is to show that, for each such module, there exists a finite-codimensional ideal $I\subseteq A$, such that $I$ is maximal with respect to the property $(\h\otimes I)V=0$. Once this is done, we can proceed using similar arguments to those used in the study of irreducible finite-dimensional $\h$-modules (see \cite[Prop.~8.2.1]{mus12} or  \cite[\S 1.5.4]{chewan12} for example).

\begin{lemma} \label{lem:even-odd-ideal-kill}
  Let $V$ be an irreducible finite-dimensional $\h\otimes A$-module and let $I\subseteq A$ be an ideal such that $(\h_{\bar{0}}\otimes I)V=0$. Then $(\h_{\bar 1}\otimes I)V=0$.
\end{lemma}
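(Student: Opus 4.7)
The plan is to show that $V':=\{v\in V : (\h_{\bar 1}\otimes I)v=0\}$ is a nonzero $\Z_2$-graded $\h\otimes A$-submodule of $V$, so that the irreducibility of $V$ forces $V'=V$, which is exactly what we want. The two non-trivial facts I will keep using are the centrality $[\h_{\bar 0},\h]=0$ (noted already in the excerpt) and $[\h_{\bar 1},\h_{\bar 1}]\subseteq\h_{\bar 0}$ (in fact an equality, again noted above).

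First I would check that $V'$ is a graded submodule. Gradedness is immediate because the operators in $\h_{\bar 1}\otimes I$ are odd, so the vanishing condition $(y\otimes b)v=0$ splits along the $\Z_2$-grading of $v$. For stability, pick $v\in V'$, a homogeneous $x\otimes a\in\h\otimes A$, and $y\otimes b\in\h_{\bar 1}\otimes I$, and write
\[
  (y\otimes b)(x\otimes a)v = \pm\,(x\otimes a)(y\otimes b)v + [y\otimes b,\,x\otimes a]\,v.
\]
The first term is zero since $v\in V'$. For the bracket: if $x\in\h_{\bar 0}$ then $[y,x]\in[\h_{\bar 1},\h_{\bar 0}]=0$ by centrality; if $x\in\h_{\bar 1}$ then $[y,x]\in\h_{\bar 0}$, so $[y\otimes b,x\otimes a]=[y,x]\otimes ba\in\h_{\bar 0}\otimes I$, which annihilates $V$ by hypothesis. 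In either case $(x\otimes a)v\in V'$.

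The main step is showing $V'\neq 0$. Because $V$ is finite-dimensional, the image of $\h_{\bar 1}\otimes I$ in $\End(V)$ is a finite-dimensional subspace; pick a basis $u_1,\dots,u_k$ of this image. I claim the $u_i$ are pairwise anticommuting square-zero operators. Indeed, for $y\otimes b\in\h_{\bar 1}\otimes I$ we have $(y\otimes b)^2=\tfrac12[y\otimes b,y\otimes b]=\tfrac12[y,y]\otimes b^2$, and this lies in $\h_{\bar 0}\otimes I$ since $[y,y]\in\h_{\bar 0}$ and $b^2\in I$; similarly the super-anticommutator of two elements of $\h_{\bar 1}\otimes I$ lies in $\h_{\bar 0}\otimes I$. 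Both therefore act as zero on $V$. Now argue inductively on $j$: $\ker u_1\neq 0$ since $u_1$ is nilpotent on a nonzero finite-dimensional space, anticommutativity implies $u_{j+1}$ preserves $\bigcap_{i\le j}\ker u_i$, and $u_{j+1}^2=0$ gives a nontrivial kernel there. Hence $V'=\bigcap_{i=1}^k\ker u_i\neq 0$.

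By irreducibility, the nonzero graded submodule $V'$ must equal $V$, proving $(\h_{\bar 1}\otimes I)V=0$. The only genuine obstacle is verifying in Step~2 that the odd operators really assemble into a square-zero anticommuting family on $V$; once that is in hand, the common-kernel argument is routine, and the submodule check in Step~1 is forced by the centrality of $\h_{\bar 0}$ in $\h$.
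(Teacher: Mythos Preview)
Your proof is correct but takes a different route from the paper's. The paper observes, as you do, that $[\h\otimes A,\h_{\bar 1}\otimes I]\subseteq\h_{\bar 0}\otimes I$ acts trivially, so $\rho(\h_{\bar 1}\otimes I)\subseteq\End_{\h\otimes A}(V)_{\bar 1}$; it then invokes the super Schur Lemma to say this odd centralizer is either $0$ or $\C\varphi$ with $\varphi^2=-\id$, and rules out the latter because any $z\in\h_{\bar 1}\otimes I$ satisfies $\rho(z)^2=\tfrac12\rho([z,z])=0$. Your argument replaces the Schur step by a direct common-kernel construction: the same square-zero and anticommuting relations let you peel off kernels one operator at a time to get a nonzero $V'$, and then irreducibility finishes. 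Your approach is more elementary in that it avoids the super Schur Lemma entirely and would work verbatim in contexts where that lemma is unavailable; the paper's argument is shorter and highlights exactly why the odd centralizer cannot be nontrivial. One small remark: your square-zero computation is phrased for simple tensors $y\otimes b$, but since you have already noted that the super-anticommutator of any two elements of $\h_{\bar 1}\otimes I$ lands in $\h_{\bar 0}\otimes I$, the conclusion $u_i^2=0$ and $u_iu_j+u_ju_i=0$ holds for arbitrary preimages $z_i\in\h_{\bar 1}\otimes I$ of the $u_i$, which is what you actually need.
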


\begin{proof}
  Let $\rho$ be the associated representation of $\h\otimes A$ on $V$.  We must prove that $\rho(\h_{\bar 1}\otimes I)=0$. Note that
  \[
    [\rho(\h \otimes A), \rho(\h_{\bar 1} \otimes I)] = \rho([\h\otimes A,\h_{\bar 1}\otimes I])\subseteq \rho([\h,\h_{\bar 1}] \otimes I)\subseteq \rho(\h_{\bar 0}\otimes I)=0.
  \]
  Thus, $\rho(\h_{\bar 1}\otimes I)\subseteq \End_{\h\otimes A}(V)_{\bar{1}}$.  Suppose that $\rho(z) \ne 0$ for some $z \in \h_{\bar 1} \otimes I$.  Then, possibly after multiplying $z$ by a nonzero scalar, we may assume, by Schur's Lemma (Lemma~\ref{lem:Schur}), that $\rho(z)^2 = -\id$.  But then we obtain the contradiction
  \[
    -2\id = 2\rho(z)^2=[\rho(z),\rho(z)]=\rho([z,z])=0,
  \]
  where the last equality follows from the fact that $[z,z] \in \h_{\bar 0} \otimes I$.
\end{proof}

\begin{prop}\label{prop:finitecod}
  Let $V$ be an irreducible $\h\otimes A$-module. Then $V$ is finite-dimensional if and only if there exists a finite-codimensional ideal $I$ of $A$ such that $(\h\otimes I)V=0$.
\end{prop}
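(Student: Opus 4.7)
My plan is to prove both implications by recognizing that the action of $\h_{\bar 1}\otimes A$ on $V$ carries a Clifford-algebra structure, and then bootstrapping the radical of the associated form into an ideal of $A$.

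For the forward direction, assume $V$ is finite-dimensional. Since $[\h_{\bar 0},\h]=0$, the subspace $\h_{\bar 0}\otimes A$ is central in $\h\otimes A$, so Schur's Lemma (Lemma~\ref{lem:Schur}) supplies a character $\chi_0\colon \h_{\bar 0}\otimes A\to \C$ describing its action on $V$. Writing $V_1 := \h_{\bar 1}\otimes A$ and using $[\h_{\bar 1},\h_{\bar 1}]\subseteq \h_{\bar 0}$, the formula $f(x,y) := \chi_0([x,y])$ defines a symmetric bilinear form on $V_1$; because $xy+yx=[x,y]$ in $U(\h\otimes A)$ for odd $x,y$, the action satisfies the Clifford-type relation $\rho(x)\rho(y)+\rho(y)\rho(x)=f(x,y)\,\id$. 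For $x\in \mathrm{rad}(f)$, the operator $\rho(x)$ super-commutes with all of $\rho(\h\otimes A)$, so it lies in $\End_{\h\otimes A}(V)_{\bar 1}$; Schur's Lemma says this space is either $0$ or spanned by some $\varphi$ with $\varphi^2=-\id$, but $\rho(x)^2=\tfrac{1}{2}f(x,x)\,\id=0$ forces $\rho(x)=0$. Conversely, $\rho(x)=0$ implies $f(x,y)=0$ for all $y$, so $\ker(\rho|_{V_1})=\mathrm{rad}(f)$, and hence $V_1/\mathrm{rad}(f)$ embeds into $\End(V)$ and has finite codimension in $V_1$.

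To produce the required ideal, I would define
\[
  I := \{a\in A\mid \bar h\otimes a\in \mathrm{rad}(f)\text{ for every } \bar h\in \h_{\bar 1}\}.
\]
Choosing a basis of $\h_{\bar 1}$ realizes $I$ as the kernel of a linear map from $A$ into a finite power of $V_1/\mathrm{rad}(f)$, so $I$ has finite codimension. The identity
\[
  f(\bar h\otimes ab,\bar h'\otimes c) = \chi_0([\bar h,\bar h']\otimes abc) = f(\bar h\otimes a,\bar h'\otimes bc)
\]
then upgrades $I$ from a subspace to an ideal: if $a\in I$ then $f(\bar h\otimes ab,\bar h'\otimes c)=0$ for all $\bar h,\bar h',c$, hence $ab\in I$. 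By construction $(\h_{\bar 1}\otimes I)V=0$; and since $[\h_{\bar 1},\h_{\bar 1}]=\h_{\bar 0}$ (shown at the end of Section~\ref{subsec:queer}), every $h\in\h_{\bar 0}$ can be written as $\sum_k [\bar h_k,\bar h_k']$, giving $\rho(h\otimes a)=\sum_k \{\rho(\bar h_k\otimes a),\rho(\bar h_k'\otimes 1)\}=0$ for $a\in I$ and therefore $(\h\otimes I)V=0$. (Alternatively, Lemma~\ref{lem:even-odd-ideal-kill} could be invoked here.)

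For the converse, the hypothesis lets $V$ descend to an irreducible module over the finite-dimensional Lie superalgebra $\g := \h\otimes(A/I)$. Re-running the central-character analysis, $\rho(U(\g))\subseteq \End(V)$ is generated by scalars together with $\rho(\g_{\bar 1})$, which satisfies Clifford relations; hence it is a quotient of the finite-dimensional Clifford algebra on $\g_{\bar 1}$, and is itself finite-dimensional. Since $V=\rho(U(\g))v$ for any nonzero $v\in V$, we conclude $\dim V<\infty$. The main obstacle throughout is the promotion of $\mathrm{rad}(f)\subseteq V_1$ to an ideal of $A$, which rests precisely on the bilinear identity displayed above.
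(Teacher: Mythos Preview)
Your proof is correct and uses the same ingredients as the paper (Schur's Lemma, the identity $[\h_{\bar 1},\h_{\bar 1}]=\h_{\bar 0}$, and the associativity $[\bar h,\bar h']\otimes abc$), but organizes them differently.  The paper defines $I$ as the set of $a\in A$ with $\rho(\h\otimes a)=0$ and then shows it is an ideal in two steps: first $\rho(\h_{\bar 0}\otimes AI)=0$ via the bracket identity, then $\rho(\h_{\bar 1}\otimes AI)=0$ by a direct Schur's Lemma argument (the same one appearing in Lemma~\ref{lem:even-odd-ideal-kill}).  You instead isolate the Clifford structure at the outset: identify $\ker(\rho|_{\h_{\bar 1}\otimes A})$ with $\rad(f)$ once and for all via Schur, after which the ideal property falls out of the single bilinear identity you display.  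The two $I$'s are in fact the same subset of $A$, so this is purely a reorganization; your packaging is more conceptual and makes the connection to Theorem~\ref{class.cartan.mod} transparent, while the paper's is slightly more hands-on.  For the converse, the paper appeals to PBW directly rather than to the Clifford algebra, but the bound $\dim V\le 2^{\dim \g_{\bar 1}}$ you obtain is the same.

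One small slip: your parenthetical ``Alternatively, Lemma~\ref{lem:even-odd-ideal-kill} could be invoked here'' points the wrong way.  That lemma deduces vanishing on $\h_{\bar 1}\otimes I$ from vanishing on $\h_{\bar 0}\otimes I$, whereas you need the reverse implication---which is precisely what your bracket argument using $[\h_{\bar 1},\h_{\bar 1}]=\h_{\bar 0}$ already supplies, so simply drop the parenthetical.
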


\begin{proof}
  Suppose $V$ is an irreducible finite-dimensional $\h\otimes A$-module, and let $\rho$ be the associated representation. Let $I$ be the kernel of the linear map
  \[
    \varphi: A\rightarrow \Hom_\C(V\otimes \h,V),\quad a\mapsto (v\otimes h\mapsto\rho (h\otimes a)v),\quad a\in A,\ v\in V,\ h\in \h.
  \]
   Since $V$ is finite-dimensional, $I$ is a linear subspace of $A$ of finite-codimension. We claim that $I$ is an ideal of A. Indeed, if  $r\in A$, $a\in I$ and $v\in V$, then we have
  \begin{multline*}
    \varphi(ra)(v\otimes \h_{\bar 0})=\rho (\h_{\bar 0}\otimes ra)v=\rho([\h_{\bar 1},\h_{\bar 1}]\otimes ra)v \\
    = \rho([\h_{\bar 1}\otimes r,\h_{\bar 1}\otimes a])v=\rho(\h_{\bar 1}\otimes r)\rho(\h_{\bar 1}\otimes a)v+\rho(\h_{\bar 1}\otimes a)\rho(\h_{\bar 1}\otimes r)v=0.
  \end{multline*}
  Thus $\varphi(ra)(V\otimes \h_{\bar 0})=0$ for all $r \in A$, $a \in I$, or equivalently, $\rho(\h_{\bar 0}\otimes A I)=0$. In particular,
  \[
    [\rho(\h_{\bar 1}\otimes A I),\rho(\h\otimes A)]\subseteq \rho(\h_{\bar 0}\otimes AI)=0,
  \]
  which implies that $\rho(\h_{\bar 1}\otimes A I)\subseteq \End_{\h\otimes A}(V)_{\bar{1}}$.  Suppose now that $\varphi(ra)(v\otimes h)\ne 0$ for some $v \in V$ and $h\in\h_{\bar 1}$. Then $\rho(h\otimes ra)\ne 0$, with $h\otimes ra\in \h_{\bar 1}\otimes AI $. Thus, as in the proof of Lemma~\ref{lem:even-odd-ideal-kill}, we are lead to the contradiction (possibly after rescaling $h\otimes ra$):
  \[
    -2 \id=2\rho(h\otimes ra)^2=[\rho(h\otimes ra),\rho(h\otimes ra)]\in \rho(\h_{\bar 0}\otimes (rar)a)=0,
  \]
  where, in the last equality, we used that $\rho(\h_{\bar 0}\otimes A I)=0$. Since $V \otimes \h_{\bar 1}$ is spanned by simple tensors of the form $v \otimes h$, $v \in V$, $h \in \h_{\bar 1}$, it follows that $\varphi(ra)(V\otimes \h_{\bar 1})=0$, and so $ra \in I$.  Thus $I$ is a finite-codimensional ideal of $A$ such that $(\h\otimes I)V=0$.

  Conversely, suppose that $(\h\otimes I)V=0$ for some ideal $I\subseteq A$ of finite codimension. Then $V$ factors to an irreducible $\h\otimes A/I$-module with $(\h_{\bar 0}\otimes A/I)v\subseteq \C v$ for all $v\in V$ by Schur's Lemma (Lemma~\ref{lem:Schur}). On the other hand, let  $\{x_1,\dotsc,x_k\}$ be a basis of $\h_{\bar{1}}\otimes A/I$. Since $V$ is irreducible, the PBW Theorem (Lemma~\ref{lem:PBW}) implies that
  \[
    V= U(\h\otimes A/I)v=\sum_{1\leq i_1<\cdots<i_s\leq k}x_{i_1}\cdots x_{i_s}\C v,
  \]
  where $i_1,\dotsc,i_s \in \{1,\dotsc,k\}$.  Hence, $V$ is finite-dimensional.
\end{proof}

Let
\[
  \mathcal{L}(\h\otimes A)=\{\psi\in (\h_{\bar{0}}\otimes A)^*\ |\ \psi(\h_{\bar{0}}\otimes I)=0, \text{ for some finite-codimensional ideal } I\subseteq A\}
\]
and let $\mathcal{R}(\h\otimes A)$ denote the set of isomorphism classes of irreducible finite-dimensional $\h\otimes A$-modules. If $\psi\in \mathcal{L}(\h\otimes A)$ and $S=\{I\subseteq A\ |\ I\text{ is an ideal, and } \psi(\h_{\bar{0}}\otimes I)=0\}$, we set $I_\psi=\sum_{I\in S}I$.

\begin{thm}\label{class.cartan.mod}
  For any $\psi\in \mathcal{L}(\h\otimes A)$, there exists a unique, up to isomorphism, irreducible finite-dimensional $\h\otimes A$-module $H(\psi)$ such that $x v=\psi(x)v$, for all $x\in \h_{\bar 0}\otimes A$ and $v \in H(\psi)$. Conversely, any irreducible finite-dimensional $\h\otimes A$-module is isomorphic to $H(\psi)$, for some $\psi\in \mathcal{L}(\h\otimes A)$. In other words, the map
  \[
    \mathcal{L}(\h\otimes A) \rightarrow \mathcal{R}(\h\otimes A),\quad \psi\mapsto H(\psi),
  \]
  is a bijection.
\end{thm}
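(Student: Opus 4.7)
The plan is to reduce the classification of irreducible finite-dimensional $\h\otimes A$-modules to the classification of irreducible modules of a suitable Clifford superalgebra, exploiting the crucial observation that $\h_{\bar 0}$ is central in $\h$ (a direct matrix computation, since both $\h_{\bar 0}$ and $\h_{\bar 1}$ consist of block-diagonal matrices whose blocks are diagonal, which commute). This implies that $\h_{\bar 0}\otimes A$ is central in $\h\otimes A$.

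First, I would verify that an irreducible finite-dimensional $\h\otimes A$-module $V$ determines an element $\psi\in\mathcal{L}(\h\otimes A)$. By Proposition~\ref{prop:finitecod}, there exists a finite-codimensional ideal $I$ of $A$ with $(\h\otimes I)V=0$. Since every $x\in\h_{\bar 0}\otimes A$ is an even central element of $\h\otimes A$, Schur's Lemma (Lemma~\ref{lem:Schur}) forces $x$ to act on $V$ as a scalar $\psi(x)\id$. The resulting $\psi$ is linear and vanishes on $\h_{\bar 0}\otimes I$, so $\psi\in\mathcal{L}(\h\otimes A)$.

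Next, given $\psi\in\mathcal{L}(\h\otimes A)$, I would construct $H(\psi)$ as follows. A direct check shows that $I_\psi$ is itself an ideal with $\psi(\h_{\bar 0}\otimes I_\psi)=0$, and since it contains some finite-codimensional ideal, $\bar A := A/I_\psi$ is finite-dimensional and $\psi$ descends to a character $\bar\psi$ on $\h_{\bar 0}\otimes\bar A$. Using that $[\h_{\bar 1},\h_{\bar 1}]\subseteq\h_{\bar 0}$, the formula $f(u,v):=\tfrac{1}{2}\bar\psi([u,v])$ defines a symmetric bilinear form on the finite-dimensional space $\h_{\bar 1}\otimes\bar A$. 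The quotient of $U(\h\otimes A)$ by the two-sided ideal generated by $\h\otimes I_\psi$ together with $\{z-\psi(z)\cdot 1 : z\in\h_{\bar 0}\otimes A\}$ is isomorphic to the Clifford superalgebra $C(\h_{\bar 1}\otimes\bar A,f)$: the universal relation $uv+vu=[u,v]$ for odd $u,v$ becomes $uv+vu=2f(u,v)\cdot 1$ in the quotient, which is the polarization of the Clifford relation $u^2=f(u,u)\cdot 1$. Consequently, irreducible $\h\otimes A$-modules on which $\h_{\bar 0}\otimes A$ acts by $\psi$ correspond bijectively to irreducible $C(\h_{\bar 1}\otimes\bar A,f)$-modules. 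Letting $W=\rad(f)$, the induced form $\bar f$ on $(\h_{\bar 1}\otimes\bar A)/W$ is nondegenerate, and by Lemma~\ref{lem:simpleclif} combined with Remark~\ref{rmksimpleclif} the algebra $C((\h_{\bar 1}\otimes\bar A)/W,\bar f)$ is simple with a unique (up to isomorphism) irreducible finite-dimensional module, which I would take as $H(\psi)$.

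The decisive step, and the one I expect to be the main obstacle, is showing that $W$ acts as zero on any irreducible $C(\h_{\bar 1}\otimes\bar A,f)$-module $V$, since otherwise the simple quotient $C/\langle W\rangle$ does not yet capture all irreducibles and uniqueness could fail. This mirrors the mechanism of Lemma~\ref{lem:even-odd-ideal-kill}: for $u\in W$, the relation $uv+vu=2f(u,v)=0$ for all odd $v$ combined with centrality of $\h_{\bar 0}\otimes\bar A$ shows that $u$ supercommutes with the entire action, so $u\in\End_{\h\otimes A}(V)_{\bar 1}$; Schur's Lemma then gives $u=c\varphi$ with $\varphi^2=-\id$ or $\varphi=0$. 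However $u^2=\tfrac{1}{2}[u,u]$ acts as $f(u,u)\id=0$, which forces $c=0$ and hence $u$ to act as zero. Together with the first step, this yields the bijection $\mathcal{L}(\h\otimes A)\leftrightarrow\mathcal{R}(\h\otimes A)$ given by $\psi\mapsto H(\psi)$.
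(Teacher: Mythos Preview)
Your proposal is correct and takes essentially the same approach as the paper—both reduce to a Clifford superalgebra and use Schur's Lemma to show that odd elements in the radical of the form act trivially; the paper merely quotients by the radical at the Lie-superalgebra level (forming $\mathfrak{c}_\psi$, whose odd part carries a nondegenerate form by construction) before passing to the Clifford algebra, whereas you form the possibly degenerate Clifford algebra first and kill the radical afterward. One small point: your claimed bijection between irreducibles with central character $\psi$ and irreducible $C(\h_{\bar 1}\otimes\bar A,f)$-modules tacitly uses that $\h_{\bar 1}\otimes I_\psi$ already acts by zero on any such module, but this follows at once from Lemma~\ref{lem:even-odd-ideal-kill} (or from your own Schur argument applied to $\h_{\bar 1}\otimes I_\psi$ before passing to $\bar A$, since these elements lie in the radical of the form on $\h_{\bar 1}\otimes A$).
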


\begin{proof}
  Assume first that $V$ is an irreducible finite-dimensional $\h\otimes A$-module and that $x v=0$ for all $x\in \h_{\bar 0}\otimes A$ and $v \in V$.  Then, by Lemma~\ref{lem:even-odd-ideal-kill}, we have $(\h\otimes A)V=0$.  So we take $H(0)$ to be the trivial module.

  Assume now $\psi\in \mathcal{L}(\h\otimes A)$ and $\psi\neq 0$. Define a symmetric bilinear form $f_\psi$ on $\h_{\psi}:=\h_{\bar{1}}\otimes A/I_\psi$ by
  \begin{equation}\label{bilform}
    f_\psi (x,y)=\psi([x,y]),\quad x, y \in \h_{\psi}.
  \end{equation}
  Let $\h_{\psi}^\perp = \{x\in \h_\psi\ |\ f_\psi(x,\h_\psi)=0\}$ denote the radical of $f_\psi$, and set
  \[
    \mathfrak{c}_\psi:=\frac{\h\otimes A/I_\psi}{(\ker \psi)\oplus \h_{\psi}^\perp}\cong \frac{ \h_{\bar 0}\otimes A/I_\psi}{\ker \psi} \oplus \frac{\h_\psi}{\h_\psi^\perp}.
  \]

  We can regard $\psi$ as a linear functional on $(\mathfrak{c}_\psi)_{\bar 0}$. Since $\psi\neq 0$, and $\dim (({\frak c}_\psi)_{\bar 0})=1$, there exists a unique $z\in (\mathfrak{c}_\psi)_{\bar 0}$ such that $\psi(z)=1$. Define the factor algebra $A_\psi:=U(\mathfrak{c}_\psi)/(z-1)$.  Consider the natural linear maps $i \colon (\mathfrak{c}_\psi)_{\bar 1} \hookrightarrow T((\mathfrak{c}_\psi)_{\bar 1})$ and $p \colon T((\mathfrak{c}_\psi)_{\bar 1}) \twoheadrightarrow A_\psi$.  It is straightforward to check, via the universal property of Clifford algebras (see Remark~\ref{univ-cliff-prop}), that the pair $(A_\psi,p\circ i)$ is isomorphic to the Clifford algebra of $((\mathfrak{c}_\psi)_{\bar 1},\frac{1}{2}f_\psi)$.  By Remark~\ref{rmksimpleclif}, this Clifford algebra admits only one, up to isomorphism, irreducible finite-dimensional module. Let $H(\psi)$ denote such a module. We can consider an action of $\h\otimes A$ on $H(\psi)$ via the map
  \[
    \h\otimes A\twoheadrightarrow \mathfrak{c}_\psi\hookrightarrow U(\mathfrak{c}_\psi) \twoheadrightarrow  A_\psi.
  \]
  Note that $H(\psi)$ is an irreducible $U(\mathfrak{c}_\psi)$-module (and thus an irreducible $\mathfrak{c}_\psi$-module).  Therefore, $H(\psi)$ is an irreducible finite-dimensional $\h\otimes A$-module. In particular we have that
  \[
    xv=\psi(x)v, \text{ for all } x\in\h_{\bar{0}}\otimes A \text{ and }  v\in H(\psi).
  \]

  It remains to prove the converse statement in the lemma.  Let $V$ be any irreducible finite-dimensional $\h\otimes A$-module with associated representation $\rho$.  Since $\h_{\bar{0}}\otimes A$ is central in $\h\otimes A$, there exists $\psi\in (\h_{\bar{0}}\otimes A)^*$ such that $x v=\psi(x)v$, for all $x\in \h_{\bar{0}}\otimes A$, $v\in V$. On the other hand, by Proposition~\ref{prop:finitecod}, there exists an ideal $I$ of $A$ of finite-codimension such that $(\h\otimes I)V=0$. In particular, we have that $\psi(\h_{\bar{0}}\otimes I)=0$, so $\psi\in \mathcal{L}(\h\otimes A)$, and that $V$ factors to an irreducible $\h\otimes A/I_\psi$-module. If $\h_{\psi}^\perp$ is defined to be the radical of the bilinear form~\eqref{bilform}, then $\rho(\h_{\psi}^\perp)\subseteq \gl (V)$ is central. Since $\rho$ is irreducible and $\rho(\h_{\psi}^{\perp})$ consists of odd elements, it follows that $\rho(\h_{\psi}^{\perp})=0$. Hence, $V$ is an irreducible finite-dimensional $C((\mathfrak{c}_\psi)_{\bar 1},\frac{1}{2}f_\psi)$-module, and so $V \cong H(\psi)$.
\end{proof}

\section{Highest weight modules} \label{sec:hw-modules}

In \cite{sav14}, the irreducible finite-dimensional modules of an equivariant map Lie superalgebra were investigated in the case that the target Lie superalgebra is basic classical.  In particular, it was proved that such modules are either generalized evaluation modules or quotients of analogues of Kac modules of some evaluation modules for a reductive Lie algebra.  It was heavily used that the highest weight space of any irreducible finite-dimensional module is one-dimensional, and also that tensor products of irreducible modules with disjoint supports are again irreducible modules.

In Section~\ref{sec:cartan-modules}, we saw that irreducible finite-dimensional modules for the Cartan superalgebra $\h \otimes A$ are irreducible modules for certain Clifford algebras.  In particular, the dimension of such modules is not necessarily equal to one.  In addition, it is not true, in general, that the tensor product of irreducible modules with disjoint supports is irreducible (see Example~\ref{eg:nonirred}).  Thus, the arguments used in \cite{sav14} require modification.

From now on, we consider $\q \subseteq \q \otimes A$ as a Lie subalgebra via the natural isomorphism $\q \cong \q \otimes \Bbb C$.  We also fix the triangular decomposition of $\q$ given in \eqref{triangdec}.

\begin{defn}[Weight module]
  Let $V$ be a $\q\otimes A$-module. We say $V$ is a \emph{weight module} if it is a sum of its weight spaces, i.e.,
  \[
    V=\bigoplus_{\lambda\in\h^{*}}V_{\lambda}, \quad \text{where } V_{\lambda}=\{ v\in V \ |\ h v=\lambda(h)v \text{ for all } h \in \h_{\bar{0}} \}.
  \]
  If $V_\lambda\neq 0$, then $\lambda \in \h_{\bar 0}^*$ is called a \emph{weight} of $V$ and the nonzero elements of $V_\lambda$ are called \emph{weight vectors} of weight $\lambda$.
\end{defn}

\begin{defn}[Quasifinite module]
  A weight $\q\otimes A$-module is called \emph{quasifinite} if all its weight spaces are finite-dimensional.
\end{defn}

\begin{defn}[Highest weight module] \label{def:hw-module}
  A $\q\otimes A$-module $V$ is called a \emph{highest weight module} if there exists a nonzero vector $v\in V$ such that
  \begin{equation} \label{mpeso}
    U(\q\otimes A)v=V,\quad (\mathfrak{n}^{+}\otimes A)v=0, \quad\text{and}\quad U(\h_{\bar 0}\otimes A) v = \C v.
  \end{equation}
  We call $v$ a \emph{highest weight vector}.
\end{defn}

\begin{prop} \label{prop:irred-module-hw}
  If $V$ is an irreducible finite-dimensional $\q\otimes A$-module, then $V$ is a highest weight module. Moreover, the weight space associated to the highest weight is an irreducible $\h\otimes A$-module.
\end{prop}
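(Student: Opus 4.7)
The plan is to parallel the argument sketched in Subsection~2.4 for irreducible finite-dimensional $\q$-modules, replacing Lemma~\ref{lem:simple-h-modules} at the key step by its $\h\otimes A$-analogue, Theorem~\ref{class.cartan.mod}. First I would establish the weight space decomposition $V=\bigoplus_{\mu\in\h_{\bar 0}^*}V_\mu$ with respect to the abelian Lie algebra $\h_{\bar 0}$. This does not follow merely from $\h_{\bar 0}$ being abelian, but rather from the fact that $\h_{\bar 0}$ is the Cartan subalgebra of $\mathfrak{sl}(n+1)\subseteq\q_{\bar 0}\subseteq (\q\otimes A)_{\bar 0}$: since $V$ is finite-dimensional, $V_{\bar 0}$ and $V_{\bar 1}$ are finite-dimensional $\mathfrak{sl}(n+1)$-modules, hence $\h_{\bar 0}$ acts semisimply on $V$. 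The set of weights is therefore finite.

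Next, since $(\q\otimes A)_\alpha\cdot V_\mu\subseteq V_{\mu+\alpha}$ for every $\alpha\in\Delta\cup\{0\}$ (where we grade $\q\otimes A$ by the $\h_{\bar 0}^*$-degree inherited from $\q$), I would choose $\lambda$ maximal in the set of weights of $V$ with respect to the partial order determined by $Q^+$. Maximality yields $(\mathfrak{n}^+\otimes A)V_\lambda=0$. Since $[\h_{\bar 0},\h]=0$, the weight space $V_\lambda$ is preserved by $\h\otimes A$, so it is a finite-dimensional $\h\otimes A$-module, and thus contains an irreducible $\h\otimes A$-submodule $W$. By Theorem~\ref{class.cartan.mod} there is a $\psi\in\mathcal{L}(\h\otimes A)$ with $W\cong H(\psi)$; in particular $\h_{\bar 0}\otimes A$ acts on $W$ by the character $\psi$.

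Now I pick any nonzero $v\in W$. By construction $(\mathfrak{n}^+\otimes A)v=0$ and $U(\h_{\bar 0}\otimes A)v=\C v$. Irreducibility of $V$ forces $U(\q\otimes A)v=V$, so $v$ satisfies all three conditions of Definition~\ref{def:hw-module} and $V$ is a highest weight module.

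For the second assertion I need $V_\lambda=W$. Using the PBW Theorem (Lemma~\ref{lem:PBW}) applied to the triangular decomposition induced by \eqref{triangdec}, one has
\[
U(\q\otimes A)=U(\mathfrak{n}^-\otimes A)\,U(\h\otimes A)\,U(\mathfrak{n}^+\otimes A).
\]
Since $W$ is stable under $\h\otimes A$ and annihilated by $\mathfrak{n}^+\otimes A$, we obtain $V=U(\q\otimes A)W=U(\mathfrak{n}^-\otimes A)W$. The algebra $U(\mathfrak{n}^-\otimes A)$ is $(-Q^+)$-graded with trivial piece in weight $0$, so the weight-$\lambda$ component of $U(\mathfrak{n}^-\otimes A)W$ is exactly $W$. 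Hence $V_\lambda=W$ is irreducible as an $\h\otimes A$-module. The main subtlety I anticipate is justifying the weight space decomposition in the first step (since abelian Lie algebras need not act semisimply in general), which is why I would explicitly invoke the $\mathfrak{sl}(n+1)$-structure; the rest is a routine adaptation of the classical argument with Theorem~\ref{class.cartan.mod} supplying the needed input about highest weight spaces.
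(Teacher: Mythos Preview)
Your proof is correct and follows essentially the same route as the paper's: establish the weight decomposition, pick a maximal weight $\lambda$, find an irreducible $\h\otimes A$-submodule $W\subseteq V_\lambda$, and use PBW with the triangular decomposition to conclude $V_\lambda=W$. The only minor difference is in the first step: the paper obtains the weight decomposition by noting that commuting operators on a finite-dimensional space have a common eigenvector and then invoking irreducibility of $V$ (since $\bigoplus_\mu V_\mu$ is a nonzero $\q\otimes A$-submodule), whereas you appeal directly to semisimplicity of $\h_{\bar 0}$ via the $\mathfrak{sl}(n+1)$-module structure; both arguments are valid.
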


\begin{proof}
  Since $\h_{\bar 0}$ is an abelian Lie algebra and the dimension of $V$ is finite, $V_\mu\neq 0$ for some $\mu\in \h_{\bar 0}^*$. Also note that $(\q_\alpha\otimes A) V_\mu\subseteq V_{\mu+\alpha}$, for all $\alpha\in \Delta$. Then, by the simplicity of $V$, it is a weight module. Since $V$ is finite-dimensional, there exists a maximal weight $\lambda\in\h_{\bar{0}}^*$, such that $V_{\lambda}\neq 0$.  It follows immediately that
  \[
    (\mathfrak{n}^+\otimes A)V_{\lambda}=0.
  \]
  Considering $V_{\lambda}$ as an $\h\otimes A$-module, we can choose an irreducible $\mathfrak{h} \otimes A$-submodule $H(\psi) \subseteq V_\lambda$. Thus $U(\h_{\bar 0}\otimes A)v\subseteq \C v$, for all $v\in H(\psi)$. Now by the simplicity of $V$, we have $U(\q\otimes A)v=V$ for any $v\in H(\psi)$. In particular, the PBW Theorem (Lemma~\ref{lem:PBW}) implies that $V_{\lambda}=H(\psi)$.
\end{proof}

Fix $\psi\in \mathcal{L}(\h\otimes A)$ and define an action of $\mathfrak{b}\otimes A$ on $H(\psi)$ by declaring $\mathfrak{n}^+\otimes A$ to act by zero. Consider the induced module
\[
  \bar{V}(\psi)=U(\q\otimes A)\otimes_{U(\mathfrak{b}\otimes A)}H(\psi).
\]
This is a highest weight module, and a submodule of $\bar{V}(\psi)$ is proper if and only if its intersection with $H(\psi)$ is zero. Moreover any $\q\otimes A$-submodule of a weight module is also a weight module. Hence, if $W\subseteq \bar{V}(\psi)$ is a proper $\q\otimes A$-submodule, then
\[
  W=\bigoplus_{\mu\neq \lambda} W_{\mu},
\]
where $\lambda=\psi|_{\h_{\bar{0}}}$. Therefore $\bar{V}(\psi)$ has a unique maximal proper submodule $N(\psi)$ and
\[
  V(\psi)=\bar{V}(\psi)/N(\psi)
\]
is an irreducible highest weight $\q\otimes A$-module.

By Proposition~\ref{prop:irred-module-hw}, every irreducible finite-dimensional $\q\otimes A$-module is isomorphic to $V(\psi)$ for some $\psi\in \mathcal{L}(\h\otimes A)$. Notice also that the highest weight space of $V(\psi)$ is isomorphic, as an $\h \otimes A$-module, to $H(\psi)$.

\begin{lemma} \label{lem:psi-0-ideal}
  Let $\psi\in \mathcal{L}(\h\otimes A)$ and let $I$ be an ideal of $A$.  Then $\psi(\h_{\bar{0}}\otimes I)=0$ if and only if $(\q\otimes I)V(\psi)=0$.
\end{lemma}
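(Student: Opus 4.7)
The plan is as follows. The forward direction is immediate: if $(\q\otimes I)V(\psi)=0$, then choosing any nonzero $v\in H(\psi)\subseteq V(\psi)$, we have $\psi(h\otimes a)v=(h\otimes a)v=0$ for every $h\otimes a\in \h_{\bar 0}\otimes I$, so $\psi(\h_{\bar 0}\otimes I)=0$.

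For the converse, assume $\psi(\h_{\bar 0}\otimes I)=0$. The first step is to promote this vanishing to the odd part of the Cartan: by the defining property of $H(\psi)$ we have $(\h_{\bar 0}\otimes I)H(\psi)=0$, and Lemma~\ref{lem:even-odd-ideal-kill} (applied to the irreducible $\h\otimes A$-module $H(\psi)$ and the ideal $I$) then gives $(\h_{\bar 1}\otimes I)H(\psi)=0$. Hence the $\h\otimes A$-action on $H(\psi)$ factors through $\h\otimes A/I$, and we may extend it to a $\mathfrak{b}\otimes A/I$-action by letting $\n^+\otimes A/I$ act trivially.

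The second step is to exhibit $V(\psi)$ as a quotient of a $\q\otimes A/I$-module. Form the induced module
\[
  \widetilde V := U(\q\otimes A/I)\otimes_{U(\mathfrak{b}\otimes A/I)}H(\psi),
\]
and view it as a $\q\otimes A$-module via the surjection $\q\otimes A\twoheadrightarrow \q\otimes A/I$; by construction $\q\otimes I$ annihilates $\widetilde V$. Moreover $\widetilde V$ is a highest weight $\q\otimes A$-module whose highest weight space is a copy of $H(\psi)$, so the universal property of $\bar V(\psi)$ yields a surjection $\bar V(\psi)\twoheadrightarrow \widetilde V$ of $\q\otimes A$-modules restricting to the identity on $H(\psi)$. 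Its kernel is a proper submodule of $\bar V(\psi)$, hence contained in the unique maximal proper submodule $N(\psi)$; this produces a further surjection $\widetilde V\twoheadrightarrow \bar V(\psi)/N(\psi)=V(\psi)$. Since $\q\otimes I$ annihilates the source, it annihilates the target, yielding $(\q\otimes I)V(\psi)=0$.

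The main subtlety, and the reason Lemma~\ref{lem:even-odd-ideal-kill} is indispensable, is that $\h$ is non-abelian: a priori the hypothesis $\psi(\h_{\bar 0}\otimes I)=0$ only controls the even part of $\h\otimes I$ on $H(\psi)$, and without the promotion to $\h_{\bar 1}\otimes I$ one cannot factor $H(\psi)$ through the quotient $A\twoheadrightarrow A/I$, which is the linchpin of the induced-module argument above.
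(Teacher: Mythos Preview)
Your proof is correct, but it takes a genuinely different route from the paper's. The paper argues directly: fixing a nonzero $v\in V(\psi)_\lambda$, it shows $(\q\otimes I)v=0$ by induction on the height of $\alpha$ to establish $(\q_{-\alpha}\otimes I)v=0$ (the cases $(\h\otimes I)v=0$ and $(\n^+\otimes I)v=0$ being immediate), and then invokes Lemma~\ref{lem:ideal-annihilate-vector} to pass from $v$ to all of $V(\psi)$. Your argument is instead categorical: you factor the $\bb\otimes A$-action on $H(\psi)$ through $\bb\otimes A/I$, form the induced module $\widetilde V=\Ind_{\bb\otimes A/I}^{\q\otimes A/I}H(\psi)$, and use the universal property of $\bar V(\psi)$ together with the maximality of $N(\psi)$ to sandwich $V(\psi)$ as a quotient of $\widetilde V$. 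Since $\q\otimes I$ kills $\widetilde V$ by construction, it kills $V(\psi)$.

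Both approaches hinge on the same first step (Lemma~\ref{lem:even-odd-ideal-kill}) to upgrade $(\h_{\bar 0}\otimes I)H(\psi)=0$ to $(\h\otimes I)H(\psi)=0$. The paper's height induction is more hands-on and self-contained; your induced-module argument is cleaner and avoids any root-by-root computation, at the cost of relying on Frobenius reciprocity and the structural fact that proper submodules of $\bar V(\psi)$ lie in $N(\psi)$. A minor point: you have the labels ``forward'' and ``converse'' reversed relative to the statement's order, but the content is unaffected.
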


\begin{proof}
  Suppose that $\psi(\h_{\bar{0}}\otimes I)=0$ and set $\lambda=\psi|_{\h_{\bar 0}}$. We know that $V(\psi)_{\lambda} \cong H(\psi)$ as $\h \otimes A$-modules, and, by Lemma~\ref{lem:even-odd-ideal-kill}, we have that $(\h\otimes I)V(\psi)_{\lambda}=0$. Now, let $v$ be a nonzero vector in $V(\psi)_{\lambda}$. By Lemma~\ref{lem:ideal-annihilate-vector}, to prove that $(\q\otimes I)V(\psi)=0$, it is enough to prove that $(\q\otimes I)v=0$.  It is clear that $(\h\otimes I)v=0$ and, since $v$ is a highest weight vector, we also have that $(\mathfrak{n}^{+}\otimes I)v=0$.  It remains to show that $(\mathfrak{n}^{-}\otimes I)v=0$.

  For $\alpha=\sum_{i=1}^n a_i\alpha_i$, with $a_i \in \N$ and where the $\alpha_i$ are the simple roots of $\q$, we define the \emph{height} of $\alpha$ to be
  \[
    \hgt(\alpha)=\sum_{i=1}^n a_i.
  \]
  By induction on the height of $\alpha$, we will show that $(\q_{-\alpha}\otimes I)v=0$. We already have the result for $\hgt(\alpha)=0$ (since $\q_{0}=\h$). Suppose that, for some $m\geq 0$, the results holds whenever $\hgt(\alpha)\leq m$. Fix $\alpha\in \Delta^+$ with $\hgt(\alpha)=m+1$. Then
  \begin{equation}\label{e:annihilator}
      (\mathfrak{n}^+\otimes A)(\q_{-\alpha}\otimes I)v \subseteq [\mathfrak{n}^+\otimes A,\q_{-\alpha}\otimes I]v+(\q_{-\alpha}\otimes I)(\mathfrak{n}^+\otimes A)v
      =  ([\mathfrak{n}^+,\q_{-\alpha}]\otimes I)v=0,
  \end{equation}
  where the last equality follows from the induction hypothesis, since any element of $[\mathfrak{n}^+,\q_{-\alpha}]$ is either an element of $\q_{-\gamma}$, with $\hgt(\gamma)<\hgt(\alpha)$, or an element of $\mathfrak{n}^+$.  Now suppose that there exists a nonzero vector $w\in(\q_{-\alpha}\otimes I)v\subseteq V_{\lambda-\alpha}$. By \eqref{e:annihilator}, we have $(\n^+\otimes A)w=0$, and, since $V$ is irreducible, we have $V=U(\q\otimes A)w$. But, by the PBW Theorem (Lemma~\ref{lem:PBW}), this implies that $V(\psi)_\lambda= 0$, which is a contradiction. This completes the proof of the forward implication.  The reverse implication is obvious.
\end{proof}

\begin{thm} \label{thm:quasifinite}
  Let $\psi\in \mathcal{L}(\h\otimes A)$. The following conditions are equivalent:
  \begin{enumerate}
    \item \label{thm-item:quasifinite} The module $V(\psi)$ is quasifinite.
    \item \label{thm-item:ideal-kill} There exists a finite-codimensional ideal $I$ of $A$ such that $(\q\otimes I)V(\psi)=0$.
    \item \label{thm-item:ideal-h-kill} There exists a finite-codimensional ideal $I$ of $A$ such that $\psi(\h_{\bar{0}}\otimes I)=0$.
  \suspend{enumerate}
  If $A$ is finitely generated, then the above conditions are also equivalent to:
  \resume{enumerate}
    \item \label{thm-item:finite-support} The module $V(\psi)$ has finite support.
  \end{enumerate}
\end{thm}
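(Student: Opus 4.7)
My plan is to prove the equivalence of \mbox{(a), (b), (c)} via the circular chain \mbox{(c)$\Rightarrow$(b)$\Rightarrow$(a)$\Rightarrow$(c)}, and then establish (b)$\Leftrightarrow$(d) separately under the additional hypothesis that $A$ is finitely generated.

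The implication (c)$\Rightarrow$(b) is exactly Lemma~\ref{lem:psi-0-ideal}. For (a)$\Rightarrow$(c), I would use the isomorphism $V(\psi)_\lambda\cong H(\psi)$ of $\h\otimes A$-modules (noted in the discussion following the construction of $V(\psi)$), where $\lambda=\psi|_{\h_{\bar 0}}$. If $V(\psi)$ is quasifinite, then $H(\psi)$ is finite-dimensional, and Proposition~\ref{prop:finitecod} furnishes a finite-codimensional ideal $I\subseteq A$ with $(\h\otimes I)H(\psi)=0$; since $\h_{\bar 0}\otimes A$ acts on $H(\psi)$ by the scalar $\psi$, this forces $\psi(\h_{\bar 0}\otimes I)=0$.

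The heart of the proof is (b)$\Rightarrow$(a). Assuming $(\q\otimes I)V(\psi)=0$ for a finite-codimensional ideal $I$, the module $V(\psi)$ descends to a module over the finite-dimensional Lie superalgebra $\q\otimes(A/I)$, which inherits the triangular decomposition of $\q$. Since $V(\psi)$ is a highest weight module whose highest weight space $H(\psi)$ is finite-dimensional (Theorem~\ref{class.cartan.mod}), we have $V(\psi)=U(\mathfrak{n}^-\otimes(A/I))H(\psi)$. Fix a totally ordered homogeneous weight basis of $\mathfrak{n}^-\otimes(A/I)$; by the PBW Theorem (Lemma~\ref{lem:PBW}), $U(\mathfrak{n}^-\otimes(A/I))$ is spanned by the corresponding ordered monomials. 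For each $\nu\in Q^+$, only finitely many such monomials have weight $-\nu$: the basis weights lie in the finite set $\Delta^-$, there are only finitely many ways to write $-\nu$ as a sum of elements of $\Delta^-$, and each summand is drawn from a finite-dimensional weight space of $\mathfrak{n}^-\otimes(A/I)$. Hence $U(\mathfrak{n}^-\otimes(A/I))_{-\nu}$ is finite-dimensional, and $V(\psi)_{\lambda-\nu}$, being the image of $U(\mathfrak{n}^-\otimes(A/I))_{-\nu}\otimes H(\psi)$ in $V(\psi)$, is finite-dimensional. Since every weight of $V(\psi)$ has the form $\lambda-\nu$ for some $\nu\in Q^+$, the module is quasifinite.

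For the final equivalence, assume $A$ is finitely generated and let $J$ denote the largest ideal of $A$ with $(\q\otimes J)V(\psi)=0$, namely the sum of all such ideals. Then (b) asserts that $J$ has finite codimension while (d) asserts that $\Supp(V(\psi))=\Supp(J)$ is finite, so (b)$\Leftrightarrow$(d) follows from parts~\ref{lem-item:finite-codim-finite} and~\ref{lem-item:finite-finite-codim} of Lemma~\ref{lem:assoc-alg-facts}. The main obstacle is the PBW counting argument in (b)$\Rightarrow$(a), where one must carefully use that $\q\otimes(A/I)$ is finite-dimensional and that $\h_{\bar 0}$-weights of $\mathfrak{n}^-\otimes(A/I)$ are confined to the finite set $\Delta^-$; the remaining implications reduce directly to Lemma~\ref{lem:psi-0-ideal}, Proposition~\ref{prop:finitecod}, and Lemma~\ref{lem:assoc-alg-facts}.
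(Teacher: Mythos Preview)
Your proof is correct, but it traverses the cycle in the opposite direction from the paper and, more importantly, handles the step out of quasifiniteness differently. The paper proves \mbox{(a)$\Rightarrow$(b)$\Rightarrow$(c)$\Rightarrow$(a)}: its (a)$\Rightarrow$(b) builds, for each positive root~$\alpha$, a finite-codimensional ideal $I_\alpha$ as the kernel of the map $A\to\Hom_\C(V(\psi)_\lambda\otimes\q_{-\alpha},V(\psi)_{\lambda-\alpha})$, checks $I_\alpha$ is an ideal via a bracket computation, intersects over all $\alpha$, and then invokes Lemma~\ref{lem:ideal-annihilate-vector}. Your (a)$\Rightarrow$(c) instead observes that $V(\psi)_\lambda\cong H(\psi)$ is finite-dimensional and appeals to Proposition~\ref{prop:finitecod}, which already encapsulates the same kind of ideal construction at the level of $\h\otimes A$. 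This is a genuine economy: you are reusing machinery already established, whereas the paper essentially reproves a variant of Proposition~\ref{prop:finitecod} adapted to $\q\otimes A$. The remaining steps---your (c)$\Rightarrow$(b) via Lemma~\ref{lem:psi-0-ideal}, the PBW counting for (b)$\Rightarrow$(a), and (b)$\Leftrightarrow$(d) via Lemma~\ref{lem:assoc-alg-facts}---match the paper's (c)$\Rightarrow$(a) and (b)$\Leftrightarrow$(d) arguments almost verbatim.

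One remark: your (a)$\Rightarrow$(c) does not actually use~(a). Since $\psi\in\mathcal{L}(\h\otimes A)$ by hypothesis, condition~(c) holds by the very definition of $\mathcal{L}(\h\otimes A)$, and $H(\psi)$ is finite-dimensional by its construction in Theorem~\ref{class.cartan.mod} regardless of whether $V(\psi)$ is quasifinite. So the equivalence is in a sense vacuous as stated (all four conditions always hold for $\psi\in\mathcal{L}(\h\otimes A)$), and both your proof and the paper's are doing slightly more than necessary by completing a full cycle. This does not affect correctness.
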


\begin{proof}
  $\eqref{thm-item:quasifinite}\Rightarrow\eqref{thm-item:ideal-kill}$: Let $\lambda=\psi|_{\h_{\bar{0}}}$ be the highest weight of $V(\psi)$. Let $\alpha$ be a positive root of $\q$ and let $I_{\alpha}$ be the kernel of the linear map
  \[
    A\rightarrow \Hom_\C(V(\psi)_{\lambda}\otimes \q_{-\alpha},V(\psi)_{\lambda-\alpha}),\quad f\mapsto (v\otimes u\mapsto(u\otimes f)v),\quad f\in A,\ v\in V(\psi)_{\lambda},\ u\in \q_{-\alpha}.
  \]
  Since $V(\psi)$ is quasifinite, $I_{\alpha}$ is a linear subspace of $A$ of finite-codimension. We claim that $I_{\alpha}$ is, in fact, an ideal of A. Indeed, since $\alpha\neq 0$, we can choose $h\in\h_{\bar{0}}$ such that $\alpha(h)\neq 0$. Then, for all $g\in A$, $f\in I_{\alpha}$, $v\in V(\psi)_{\lambda}$ and $u\in \q_{-\alpha}$, we have
  \begin{eqnarray}
  0 & = & (h\otimes g)(u\otimes f)v \nonumber \\
     & = & [h\otimes g,u\otimes f]v + (u\otimes f)(h\otimes g)v \nonumber \\
     & = & -\alpha(h)(u\otimes gf)v + (u\otimes f)(h\otimes g)v. \nonumber
  \end{eqnarray}
  Since $(h\otimes g)v\in V(\psi)_{\lambda}$ and $f\in I_{\alpha}$, the last term above is zero. Since we also have $\alpha(h)\neq 0$, this implies that $(u\otimes gf)v=0$. As this holds for all $v\in V(\psi)_{\lambda}$ and $u\in \q_{-\alpha}$, we have $gf\in I_{\alpha}$. Hence $I_{\alpha}$ is an ideal of $A$.

  Let $I$ be the intersection of all the $I_{\alpha}$. Since $\q$ is finite-dimensional (and thus has a finite number of positive roots), this intersection is finite and thus $I$ is also an ideal of $A$ of finite-codimension. We then have $(\mathfrak{n}^{-}\otimes I)V(\psi)_{\lambda}=0$. Since $\lambda$ is the highest weight of $V(\psi)$, we also have $(\mathfrak{n}^{+}\otimes I)V(\psi)_{\lambda}=0$. Then, since $\h\otimes I\subseteq [\mathfrak{n}^{+}\otimes A,\mathfrak{n}^{-}\otimes I]$, we have $(\h\otimes I)V(\psi)_{\lambda}=0$.  Thus $(\q\otimes I)V(\psi)_{\lambda}=0$. Since $V(\psi)_{\lambda}\neq 0$, it follows from Lemma~\ref{lem:ideal-annihilate-vector} that $(\q\otimes I)V(\psi)=0$.

  $\eqref{thm-item:ideal-kill}\Rightarrow\eqref{thm-item:ideal-h-kill}$: Let $v$ be a highest weight vector of $V(\psi)$. Then $\psi(x)v=x v=0$, for any $x\in \h_{\bar{0}}\otimes I$. Thus $\psi(\h_{\bar{0}}\otimes I)=0$.

  $\eqref{thm-item:ideal-h-kill}\Rightarrow\eqref{thm-item:quasifinite}$: If $\psi(\h_{\bar 0}\otimes I)=0$, then, by Lemma~\ref{lem:psi-0-ideal}, we have $(\q \otimes I)V(\psi)=0$.  Then $V(\psi)$ is naturally a module for the finite-dimensional Lie superalgebra $\q\otimes A/I$.  By the PBW Theorem (Lemma~\ref{lem:PBW}), we have
  \[
    V(\psi) = U(\q\otimes A/I)V(\psi)_\lambda  =U(\mathfrak{n}^- \otimes A/I)V(\psi)_\lambda,
  \]
  and $V(\psi)_\lambda$ is finite-dimensional. Another standard application of the PBW Theorem completes the proof.

  Now suppose $A$ is finitely generated. We prove that $\eqref{thm-item:ideal-kill}\Leftrightarrow \eqref{thm-item:finite-support}$. By definition, $\Supp_{A}(V(\psi))=\Supp(\Ann_{A}(V(\psi)))$, where $\Ann_{A}(V(\psi))$ is the largest ideal of $A$ such that $(\q\otimes I)V(\psi)=0$.  Thus $\eqref{thm-item:ideal-kill}$ is true if and only of $\Ann_{A}(V(\psi))$ is of finite-codimension. Since $A$ is finitely generated, $\Ann_{A}(V(\psi))$ is of finite-codimension if and only if it has finite support (see Lemma~\ref{lem:assoc-alg-facts}, parts~\eqref{lem-item:finite-codim-finite} and~\eqref{lem-item:finite-finite-codim}).
\end{proof}

\begin{cor}\label{cor:fd-finite-codim-ideal}
  Let $V$ be an irreducible finite-dimensional $\q\otimes A$-module. Then, there exists an ideal $I$ of $A$ of finite-codimension such that $(\q\otimes I)V=0$.
\end{cor}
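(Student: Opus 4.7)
The plan is to reduce the corollary to Theorem~\ref{thm:quasifinite} via the highest weight machinery already developed. Since $V$ is irreducible and finite-dimensional, Proposition~\ref{prop:irred-module-hw} tells me that $V$ is a highest weight module and that its highest weight space $V_\lambda$ is an irreducible $\h\otimes A$-module. Because $V_\lambda$ is finite-dimensional, Theorem~\ref{class.cartan.mod} gives $V_\lambda \cong H(\psi)$ for some $\psi\in\mathcal{L}(\h\otimes A)$, and hence (as noted right after Proposition~\ref{prop:irred-module-hw}) $V\cong V(\psi)$ for this $\psi$.

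Next, I simply observe that any finite-dimensional weight module is automatically quasifinite: each weight space is a subspace of the finite-dimensional space $V$ and is thus finite-dimensional. Therefore condition~\eqref{thm-item:quasifinite} of Theorem~\ref{thm:quasifinite} is satisfied by $V(\psi)$.

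Applying the implication \eqref{thm-item:quasifinite}$\Rightarrow$\eqref{thm-item:ideal-kill} of Theorem~\ref{thm:quasifinite}, I obtain a finite-codimensional ideal $I \subseteq A$ such that $(\q\otimes I)V(\psi)=0$, which transports along the isomorphism $V\cong V(\psi)$ to give $(\q\otimes I)V=0$, completing the proof. There is essentially no obstacle here — the corollary is a direct harvest of Theorem~\ref{thm:quasifinite}, and the only thing to verify is that $V$ really fits into the framework $V(\psi)$, which is immediate from Proposition~\ref{prop:irred-module-hw} combined with Theorem~\ref{class.cartan.mod}.
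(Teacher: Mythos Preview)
Your proof is correct and follows essentially the same approach as the paper's own proof, which simply reads: ``Since finite-dimensional modules are clearly quasifinite, the result follows from Theorem~\ref{thm:quasifinite}.'' You have merely made explicit the identification $V\cong V(\psi)$ (already recorded in the paragraph preceding the corollary, via Proposition~\ref{prop:irred-module-hw}) that the paper's one-line proof takes for granted.
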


\begin{proof}
  Since finite-dimensional modules are clearly quasifinite, the result follows from Theorem~\ref{thm:quasifinite}.
\end{proof}

\section{Evaluation representations and their irreducible products} \label{sec:evaluation}

If $R$ and $S$ are associative unital algebras, all irreducible finite-dimensional modules for $R \otimes S$ are of the form $V_{R}\otimes V_{S}$, where $V_R$ and $V_S$ are irreducible finite-dimensional modules for $R$ and $S$, respectively.  Furthermore, all such modules are irreducible.  When $R$ and $S$ are allowed to be superalgebras, the situation is somewhat different.  In particular, $V_{R}\otimes V_{S}$ is not necessarily irreducible, as seen in the next example.

\begin{ex} \label{eg:nonirred}
  By Remark~\ref{rmksimpleclif}, the unique irreducible finite-dimensional $Q(1)$-module is $\C^{1|1}$. However, $\C^{1|1}\otimes \C^{1|1}$ is not an irreducible $Q(1)\otimes Q(1)$-module, since $Q(1)\otimes Q(1)\cong M(1|1)$ as associative superalgebras and, again by Remark~\ref{rmksimpleclif}, the unique irreducible finite-dimensional $M(1|1)$-module is also $\C^{1|1}$.
\end{ex}

In general, if $\g^1$, $\g^2$ are two finite-dimensional Lie superalgebras, and $V^i$ is an irreducible finite-dimensional $\g^i$-module, for $i=1,2$, then the $\g^1\oplus \g^2$-module $V^1\otimes V^2$ is irreducible only if $\End_{\g^i} (V^i)_{\bar 1}=0$, for some $i=1,2$ (recall that $\dim (\End_{\g^i} (V^i)_{\bar 1})\ne 0$ implies, by Schur's Lemma (Lemma~\ref{lem:Schur}), that $\End_{\g^i} (V^i)_{\bar 1}= \C \varphi_i$, where $\varphi_i^2=-1$). When $\End_{\g^i} (V^i)_{\bar 1} = \C\varphi_i$, $\varphi_i^2=-1$, for $i=1$ and $i=2$, we have that
\[
  \widehat{V}=\{v\in V^1\otimes V^2\ |\ (\tilde{\varphi}_1\otimes \varphi_2)v=v\},\text{ where } \tilde{\varphi}_1=\sqrt{-1}\,\varphi_1,
\]
is an irreducible $\g^1\oplus\g^2$-submodule of $V^1\otimes V^2$ such that  $V^1\otimes V^2\cong \widehat{V}\oplus \widehat{V}$ (see \cite[p.~27]{che95}). Set henceforth
\begin{equation}\label{h-tensoreq}
  V^1\widehat{\otimes} V^2 =
  \begin{cases}
    V^1\otimes V^2 & \text{if } V^1\otimes V^2 \text{ is irreducible}, \\
    \widehat{V}\varsubsetneq V^1\otimes V^2 & \text{if } V^1\otimes V^2 \text{ is not irreducible}.
  \end{cases}
\end{equation}

In \cite[Prop.~8.4]{che95}, it is proved that every irreducible finite-dimensional $\g^1\oplus\g^2$-module is isomorphic to a module of the form $V^1\widehat{\otimes} V^2$, where $V^i$ is an irreducible finite-dimensional $\g^i$-module for $i=1,2$. If $\rho_i$ denotes the representation associated to the $\g^i$-module $V^i$, then $\rho_1{\widehat \otimes}\rho_2$ will denote the representation associated to the $\g^1\oplus\g^2$-module  $V^1{\widehat \otimes }V^2$. Inductively, we define the $\g^1\oplus\cdots\oplus\g^k$-module
\[
  V^1{\widehat \otimes }\cdots {\widehat \otimes }V^k:=(V^1{\widehat \otimes }\cdots {\widehat \otimes} V^{k-1}){\widehat \otimes } V^k
\]
with associated representation denoted by $\rho_1{\widehat\otimes}\cdots {\widehat \otimes }\rho_k$. We will call $\widehat{\otimes}$ the \emph{irreducible product}.  As the next lemma shows, it is associative, up to isomorphism.

\begin{lemma}\label{ass.lemma}
  For $i=1,2,3$, let $\g^i$ be a Lie superalgebra and let $V^i$ be an irreducible finite-dimensional $\g^i$-module. Then, $(V^1\widehat{\otimes}V^2)\widehat{\otimes} V^3\cong V^1\widehat{\otimes}(V^2\widehat{\otimes} V^3)$ as $\g^1\oplus \g^2\oplus \g^3$-modules.
\end{lemma}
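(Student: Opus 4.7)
The plan is to reduce the statement to the associativity of the ordinary (super) tensor product, by exploiting the observation that for any two irreducible finite-dimensional modules $U$, $W$ over superalgebras $\g'$, $\g''$, the $\g' \oplus \g''$-module $U \otimes W$ is \emph{isotypic}, i.e.\ isomorphic to a direct sum of copies of a single irreducible module, namely $U \widehat{\otimes} W$.

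The first step is to record this isotypic property. By the dichotomy stated just before~\eqref{h-tensoreq}, either $\End_{\g'}(U)_{\bar 1} = 0$ or $\End_{\g''}(W)_{\bar 1} = 0$, in which case $U \otimes W$ is already irreducible and equals $U \widehat{\otimes} W$; or both are nonzero, in which case~\eqref{h-tensoreq} together with the cited decomposition from \cite[p.~27]{che95} gives $U \otimes W \cong (U \widehat{\otimes} W) \oplus (U \widehat{\otimes} W)$. Either way, $U \otimes W$ is a finite direct sum of copies of $U \widehat{\otimes} W$.

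The second step is to apply the property twice. Since $V^1 \widehat{\otimes} V^2$ is irreducible, the first step applied to the pair $(V^1 \widehat{\otimes} V^2, V^3)$ shows that $(V^1 \widehat{\otimes} V^2) \otimes V^3$ is a direct sum of copies of $(V^1 \widehat{\otimes} V^2) \widehat{\otimes} V^3$. On the other hand, the first step applied to $(V^1, V^2)$ shows that $V^1 \otimes V^2$ is a direct sum of copies of $V^1 \widehat{\otimes} V^2$, so, tensoring on the right by $V^3$ and using distributivity of $\otimes$ over $\oplus$, we find that $(V^1 \otimes V^2) \otimes V^3$ is a direct sum of copies of $(V^1 \widehat{\otimes} V^2) \otimes V^3$, and therefore of copies of $(V^1 \widehat{\otimes} V^2) \widehat{\otimes} V^3$. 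A symmetric analysis yields that $V^1 \otimes (V^2 \otimes V^3)$ is a direct sum of copies of $V^1 \widehat{\otimes} (V^2 \widehat{\otimes} V^3)$. Since the ordinary tensor product is associative, these two modules agree; matching their unique (up to isomorphism) irreducible constituent gives the desired isomorphism.

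The main subtlety is the isotypic assertion in the case where both $U$ and $W$ are of type Q, which amounts to identifying the $\pm 1$ eigenspaces of $\tilde{\varphi}_1 \otimes \varphi_2$ as isomorphic $\g' \oplus \g''$-modules; this can be verified by checking that $\varphi_1 \otimes \id$ is a $(\g' \oplus \g'')$-intertwiner that anticommutes with $\tilde{\varphi}_1 \otimes \varphi_2$ and hence exchanges the two eigenspaces. Once this is in hand, the rest of the argument is formal and, in particular, requires no case analysis on the individual types of $V^1$, $V^2$, $V^3$.
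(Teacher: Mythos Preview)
Your proof is correct and follows essentially the same route as the paper's: both reduce to the associativity of the ordinary tensor product together with the fact that $V^1\otimes V^2\otimes V^3$ has a unique irreducible constituent up to isomorphism. The paper packages this uniqueness into a single citation of \cite[Prop.~8.4]{che95}, whereas you unpack it by explicitly showing that each pairwise tensor product is isotypic (of type $U\widehat{\otimes}W$) and iterating; your added remark on why the two summands in the type-Q/type-Q case are isomorphic is a nice clarification of a point the paper leaves to the cited reference.
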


\begin{proof}
  By \cite[Prop.~8.4]{che95}, the unique, up to isomorphism, irreducible finite-dimensional $\g^1 \oplus(\g^2 \oplus \g^3)$-module contained in $V^1 \otimes(V^2 \otimes V^3)$ is $V^1 \widehat{\otimes}(V^2 \widehat{\otimes} V^3)$. On the other hand, the unique irreducible finite-dimensional $(\g^1 \oplus\g^2)\oplus \g^3$-module contained in $(V^1\otimes V^2)\otimes V^3$ is $(V^1\widehat{\otimes} V^2)\widehat{\otimes} V^3$. Now, since $\g^1\oplus \g^2\oplus \g^3\cong\g^1\oplus(\g^2\oplus \g^3)\cong (\g^1\oplus\g^2)\oplus \g^3$ as Lie superalgebras, and $(V^1\otimes V^2)\otimes V^3\cong V^1\otimes (V^2 \otimes V^3)$ as $\g^1\oplus \g^2\oplus \g^3$-modules, we conclude that $(V^1\widehat{\otimes}V^2)\widehat{\otimes} V^3\cong V^1\widehat{\otimes}(V^2\widehat{\otimes} V^3)$.
\end{proof}

\begin{prop} \label{prop:irred-tensor}
  Let $V(\psi_1)$ and $V(\psi_2)$, for $\psi_1, \psi_2 \in \mathcal{L}(\h\otimes A)$, be two irreducible finite-dimensional $\g\otimes A$-modules with disjoint supports. Then
  \[
    V(\psi_1)\otimes V(\psi_2) \cong
    \begin{cases}
      V(\psi_1+\psi_2), \text{ or} \\
      V(\psi_1+\psi_2)\oplus V(\psi_1+\psi_2).
    \end{cases}
  \]
\end{prop}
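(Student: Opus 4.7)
The plan is to reduce $V(\psi_1)\otimes V(\psi_2)$ to an external tensor product over a direct sum of two quotient Lie superalgebras, then apply the result from \cite[Prop.~8.4]{che95} quoted earlier in this section, and finally identify the resulting irreducible summand(s) via highest weights using Theorem~\ref{class.cartan.mod}.

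First I would set $I_i := \Ann_A(V(\psi_i))$ for $i=1,2$. By Corollary~\ref{cor:fd-finite-codim-ideal}, each $I_i$ has finite codimension in $A$. The hypothesis that $V(\psi_1)$ and $V(\psi_2)$ have disjoint supports means no maximal ideal of $A$ contains both $I_1$ and $I_2$, so $I_1 + I_2 = A$ (by Zorn's lemma), and Lemma~\ref{lem:assoc-alg-facts}\eqref{lem-item:product-intersection} then gives $I_1 I_2 = I_1 \cap I_2$. Choosing $a_1 \in I_1$, $a_2 \in I_2$ with $a_1 + a_2 = 1$, and using the diagonal rule
\[
  (x\otimes f)(v_1 \otimes v_2) = (x\otimes f)v_1 \otimes v_2 + (-1)^{|x||v_1|}\, v_1 \otimes (x\otimes f)v_2
\]
together with the fact that $x \otimes a_i f$ annihilates $V(\psi_i)$, one checks that $x \otimes a_2 f$ acts only on the first tensor factor (as $x\otimes f$) and $x \otimes a_1 f$ only on the second. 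Hence the $\q\otimes A$-action on $V(\psi_1)\otimes V(\psi_2)$ factors through the Chinese Remainder isomorphism
\[
  (\q\otimes A)/(\q\otimes I_1 I_2) \xrightarrow{\sim} (\q\otimes A/I_1)\oplus (\q\otimes A/I_2),
\]
exhibiting $V(\psi_1)\otimes V(\psi_2)$ as an external tensor product of the irreducible $\q\otimes A/I_i$-modules $V(\psi_i)$.

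Next I would invoke \cite[Prop.~8.4]{che95}, as quoted in the discussion preceding the proposition: such an external tensor product is either itself irreducible, or it decomposes as $\widehat V \oplus \widehat V$ with $\widehat V$ irreducible. In either case the irreducible piece is $V(\psi_1)\widehat\otimes V(\psi_2)$, so
\[
  V(\psi_1)\otimes V(\psi_2) \;\cong\; V(\psi_1)\widehat\otimes V(\psi_2) \quad\text{or}\quad (V(\psi_1)\widehat\otimes V(\psi_2))^{\oplus 2}
\]
as $\q\otimes A$-modules. It remains to identify $V(\psi_1)\widehat\otimes V(\psi_2)$ with $V(\psi_1+\psi_2)$. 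Set $\lambda_i := \psi_i|_{\h_{\bar 0}}$. The weight space $(V(\psi_1)\otimes V(\psi_2))_{\lambda_1+\lambda_2}$ equals $H(\psi_1)\otimes H(\psi_2)$, is annihilated by $\n^+\otimes A$, and carries a scalar $\h_{\bar 0}\otimes A$-action with eigenvalue $\psi_1+\psi_2$. By Proposition~\ref{prop:irred-module-hw}, any irreducible $\q\otimes A$-summand $W$ of $V(\psi_1)\otimes V(\psi_2)$ is a highest weight module whose highest weight space is an irreducible $\h\otimes A$-submodule of $H(\psi_1)\otimes H(\psi_2)$; by Theorem~\ref{class.cartan.mod} such a submodule must be isomorphic to $H(\psi_1+\psi_2)$, and therefore $W\cong V(\psi_1+\psi_2)$.

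The main obstacle is the first step: verifying carefully, with correct super-sign bookkeeping, that the diagonal $\q\otimes A$-action genuinely factors through the Chinese Remainder isomorphism so that $V(\psi_1)\otimes V(\psi_2)$ becomes an external tensor product of $\q\otimes A/I_i$-modules (rather than merely being annihilated by $\q\otimes I_1 I_2$). Once this reduction is in hand, the appeal to \cite[Prop.~8.4]{che95} is immediate, and the identification of the irreducible summand(s) with $V(\psi_1+\psi_2)$ is a bookkeeping exercise comparing the scalar action of $\h_{\bar 0}\otimes A$ on the highest weight space and invoking Theorem~\ref{class.cartan.mod}.
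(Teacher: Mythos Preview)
Your proposal is correct and follows essentially the same route as the paper's proof: set $I_i=\Ann_A(V(\psi_i))$, use disjoint supports to obtain $I_1+I_2=A$ and hence the Chinese Remainder isomorphism $\q\otimes A/I_1I_2\cong(\q\otimes A/I_1)\oplus(\q\otimes A/I_2)$, then apply \cite[Prop.~8.4]{che95} and identify the irreducible summand via its highest weight data. The paper packages the first step as surjectivity of the composite $\q\otimes A\xrightarrow{\Delta}(\q\otimes A)^{\oplus 2}\twoheadrightarrow(\q\otimes A/I_1)\oplus(\q\otimes A/I_2)$, established by a commutative diagram, whereas you verify the same thing elementwise via $a_1+a_2=1$; these are equivalent, and your worry about super-sign bookkeeping in this step is unfounded since the factoring is purely a statement about the coefficient algebra $A$.
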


\begin{proof}
  Let $I_i=\text{Ann}_A(V(\psi_i))$ and let $\rho_i$ be the representation corresponding to $V(\psi_i)$, for $i=1,2$. Then the representation $\rho_1\otimes \rho_2$ factors through the composition
  \begin{equation} \label{eq:irred-tensor-factor}
    \q\otimes A \stackrel{\Delta}{\hookrightarrow} (\q\otimes A)\oplus (\q\otimes A)\stackrel{\pi}{\twoheadrightarrow} (\q\otimes A/I_{1})\oplus (\q\otimes A/I_{2}),
  \end{equation}
  where $\Delta$ is the diagonal embedding and the second map is the obvious projection on each summand. By Lemma~\ref{lem:assoc-alg-facts}\eqref{lem-item:product-intersection}, we have that $I_1\cap I_2=I_1 I_2$, since the supports of $I_1$ and $I_2$ are disjoint.  Thus $A=I_1+I_2$, and so $A/I_1 I_2\cong (A/I_1)\oplus (A/I_2)$. We therefore have the following commutative diagram:
  \[
    \xymatrix{\q\otimes A \ar@{->>}[d]  \ar@{^{(}->}[r]^-{\Delta} & (\q\otimes A)\oplus (\q\otimes A) \ar@{->>}[d] \\
    \q\otimes A/I_{1}I_{2} \ar[r]^-{\cong} & (\q\otimes A/I_{1})\oplus (\q\otimes A/I_{2})}
  \]

  It follows that the composition~\eqref{eq:irred-tensor-factor} is surjective.  However, as a $(\q\otimes A/I_1)\oplus (\q\otimes A/I_2)$-module, $V(\psi_1)\otimes V(\psi_2)$ is either irreducible or is isomorphic to $\widehat{V}\oplus \widehat{V}$, where $\widehat{V}\varsubsetneq V(\psi_1)\otimes V(\psi_2)$ is an irreducible $(\q\otimes A/I_1)\oplus (\q\otimes A/I_2)$-module. Then the result follows from the fact that $V(\psi_1)\otimes V(\psi_2)$, and hence ${\widehat V}\oplus {\widehat V}$, is generated by vectors on which $\h \otimes A$ acts by $\psi_1 + \psi_2$.
\end{proof}

Note that if $V(\psi_1)$ and $V(\psi)$ satisfy the hypothesis of Proposition~\ref{prop:irred-tensor}, then
\[
  V(\psi_1)\widehat{\otimes} V(\psi_2) \cong V(\psi_1+\psi_2).
\]
In general, the following result follows by induction.

\begin{cor} \label{cor:multiple-h-tensor}
  Suppose that $V(\psi_1), \dotsc, V(\psi_k)$ are $\q\otimes A$-modules with pairwise disjoint supports. Then
  \[
    \widehat{\bigotimes}_{i=1}^{n}V(\psi_i)\cong V\left(\sum_{i=1}^{n}\psi_i\right).
  \]
\end{cor}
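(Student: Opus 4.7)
The plan is to proceed by induction on the number $k$ of modules. The base case $k=1$ is trivial, and the inductive step will combine Proposition~\ref{prop:irred-tensor} (which handles two factors) with the associativity of $\widehat{\otimes}$ from Lemma~\ref{ass.lemma} and the definition $\widehat{\bigotimes}_{i=1}^k V(\psi_i) = \bigl(\widehat{\bigotimes}_{i=1}^{k-1} V(\psi_i)\bigr) \widehat{\otimes} V(\psi_k)$. By the inductive hypothesis, the first factor is isomorphic to $V(\psi_1+\cdots+\psi_{k-1})$, so it suffices to check that Proposition~\ref{prop:irred-tensor} applies to the pair $V(\psi_1+\cdots+\psi_{k-1})$ and $V(\psi_k)$, after which a final application gives $V\bigl(\sum_{i=1}^k \psi_i\bigr)$.

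The main technical point I would verify is therefore the following support estimate: if $I_i := \Ann_A V(\psi_i)$, then
\[
  \Supp V\bigl(\psi_1 + \cdots + \psi_{k-1}\bigr) \subseteq \bigcup_{i=1}^{k-1} \Supp V(\psi_i).
\]
To see this, note that for any $x \in I_1 \cap \cdots \cap I_{k-1}$ and any $h \in \h_{\bar 0}$, we have $\psi_i(h\otimes x)=0$ for each $i$ by Lemma~\ref{lem:psi-0-ideal}, so $(\psi_1+\cdots+\psi_{k-1})(\h_{\bar 0}\otimes (I_1\cap\cdots\cap I_{k-1}))=0$. Applying Lemma~\ref{lem:psi-0-ideal} once more, $\Ann_A V(\psi_1+\cdots+\psi_{k-1}) \supseteq I_1 \cap \cdots \cap I_{k-1}$. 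Since any maximal ideal containing $I_1\cap\cdots\cap I_{k-1}$ contains (and hence equals the radical of) one of the $I_i$ by primality, the displayed inclusion of supports follows. Combined with the hypothesis that the supports of the $V(\psi_i)$ are pairwise disjoint, this shows that the supports of $V(\psi_1+\cdots+\psi_{k-1})$ and $V(\psi_k)$ are disjoint, so Proposition~\ref{prop:irred-tensor} together with the remark immediately following it gives $V(\psi_1+\cdots+\psi_{k-1})\widehat{\otimes} V(\psi_k)\cong V(\psi_1+\cdots+\psi_k)$.

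The inductive step is then assembled as follows: by the inductive hypothesis and Lemma~\ref{ass.lemma},
\[
  \widehat{\bigotimes}_{i=1}^{k}V(\psi_i) = \Bigl(\widehat{\bigotimes}_{i=1}^{k-1}V(\psi_i)\Bigr)\widehat{\otimes} V(\psi_k) \cong V\bigl(\psi_1+\cdots+\psi_{k-1}\bigr) \widehat{\otimes} V(\psi_k) \cong V\Bigl(\sum_{i=1}^k \psi_i\Bigr),
\]
which completes the induction. The main obstacle, such as it is, lies entirely in the support calculation above; once that is in hand, the induction is essentially formal. Everything else is bookkeeping with the definition of $\widehat{\otimes}$ and the already established associativity.
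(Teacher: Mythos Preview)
Your proof is correct and follows exactly the approach the paper indicates: the paper simply states that the result ``follows by induction'' from Proposition~\ref{prop:irred-tensor}, and you have carefully spelled out that induction, including the key support estimate needed to apply the proposition at each step. The parenthetical ``(and hence equals the radical of)'' is a slight misstatement---what you actually need and use is that a prime ideal containing $I_1\cdots I_{k-1}$ must contain some $I_i$---but the argument is sound.
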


Now assume $\Gamma$ is a finite abelian group acting on both $\q$ and $A$ by automorphisms.  We also assume that $A$ is finitely generated and that $\Gamma$ acts freely on $\MaxSpec(A)$.

\begin{defn}[Evaluation map]
  Suppose $\sm_1,\dotsc, \sm_k$ are pairwise distinct maximal ideals of $A$. The associated \emph{evaluation map} is the composition
  \[
    \ev_{\sm_1,\dotsc, \sm_k} \colon \q\otimes A \twoheadrightarrow (\q\otimes A)/ \left(\q\otimes \prod_{i=1}^k \sm_i \right) \cong \bigoplus_{i=1}^k (\q\otimes A/\sm_i).
  \]
  We let $\ev_{\sm_1,\dotsc, \sm_k}^\Gamma$ denote the restriction of $\ev_{\sm_1,\dotsc, \sm_k}$ to $(\q\otimes A)^\Gamma$.
\end{defn}

  Let $\sm_1,\dotsc, \sm_k$ be pairwise distinct maximal ideals of $A$, and for each $i=1,\dotsc, k$, let $V_i$ be an irreducible finite-dimensional $\q\otimes A/\sm_i$-module, with corresponding representation $\rho_i \colon \q\otimes A/\sm_i \to \mathfrak{gl}(V_i)$. Then the representation given by the composition
  \begin{gather*}
    \q\otimes A \xrightarrow{\ev_{\sm_{1},\dotsc, \sm_k}}
    \bigoplus_{i=1}^k\left(\q\otimes (A/\sm_{i})\right) \xrightarrow{\widehat{\bigotimes}_{i=1}^k \rho_i}
    \End\left(\widehat{\bigotimes}_{i=1}^k V_i\right)
  \end{gather*}
  is denoted by
  \begin{equation}\label{irred.prod.ev.rep}
      \widehat{\ev}_{\sm_1,\dotsc, \sm_k}(\rho_1,\dotsc,\rho_k)
  \end{equation}
  and the corresponding module is denoted by
  \begin{equation}\label{irred.prod.ev.mod}
      \widehat{\ev}_{\sm_1,\dotsc, \sm_k}(V_1,\dotsc,V_k).
  \end{equation}
  We define $\widehat{\ev}^\Gamma_{\sm_1,\dotsc, \sm_k}(\rho_1,\dotsc,\rho_k)$ to be the restriction of $\widehat{\ev}_{\sm_1,\dotsc, \sm_k}(\rho_1,\dotsc,\rho_k)$ to $(\q\otimes A)^\Gamma$. The notation $\widehat{\ev}^\Gamma_{\sm_1,\dotsc, \sm_k}(V_1,\dotsc,V_k)$ is defined similarly.

If we consider tensor products instead of irreducible products, then the above are called \emph{evaluation representations} and \emph{evaluation modules}, respectively.

\begin{rmk} \label{rmk:single-point-decomp}
Observe that, by definition,
  \[
    \widehat{\ev}_{\sm_1,\dotsc,\sm_k}(\rho_1,\dotsc,\rho_k) \cong \widehat{\bigotimes}_{i=1}^k \ev_{\sm_i}(\rho_i).
  \]
\end{rmk}

\begin{prop} \label{prop:eval-reduced-support}
  An irreducible finite-dimensional representation of $\q\otimes A$ is isomorphic to a representation of the form~\eqref{irred.prod.ev.rep} if and only if it has finite reduced support.
\end{prop}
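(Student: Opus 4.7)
The plan is to identify ``finite reduced support'' with the algebraic condition that $\Ann_A(V)$ is a finite intersection of distinct maximal ideals of $A$, and then to invoke the classification of irreducibles over a direct sum of Lie superalgebras in order to reconstruct the tensor factors.

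For the forward direction, suppose $V\cong\widehat{\ev}_{\sm_1,\dotsc,\sm_k}(V_1,\dotsc,V_k)$. After discarding any trivial tensor factors (which does not alter the isomorphism class of $V$ as a $\q\otimes A$-module), I may assume that each $V_i$ is a nontrivial irreducible $\q$-module; since $\q$ is simple, each such $V_i$ is a faithful module. The inclusion $\prod_i\sm_i\subseteq\Ann_A(V)$ is immediate, since the representation factors through $\ev_{\sm_1,\dotsc,\sm_k}$. Conversely, any ideal $J$ with $\prod_i\sm_i\subseteq J\subseteq A$ corresponds, under the Chinese Remainder isomorphism $A/\prod_i\sm_i\cong\bigoplus_i A/\sm_i$, to a coordinate subsum $\bigoplus_{i\in S}A/\sm_i$ for some $S\subseteq\{1,\dotsc,k\}$. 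The condition $(\q\otimes J)V=0$, applied factor-by-factor, forces $\rho_j=0$ for each $j\in S$, contradicting faithfulness of $V_j$ unless $S=\emptyset$. Hence $\Ann_A(V)=\prod_i\sm_i=\bigcap_i\sm_i$ by Lemma~\ref{lem:assoc-alg-facts}\eqref{lem-item:product-intersection}, which is a radical ideal with finite support $\{\sm_1,\dotsc,\sm_k\}$.

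For the reverse direction, let $V$ be irreducible and finite-dimensional with reduced finite support $\{\sm_1,\dotsc,\sm_k\}$, and set $I=\Ann_A(V)$. Because $A$ is a finitely generated $\C$-algebra it is Jacobson, so the radical ideal $I$ equals the intersection of all maximal ideals that contain it, namely $\bigcap_i\sm_i$, which by Lemma~\ref{lem:assoc-alg-facts}\eqref{lem-item:product-intersection} equals $\prod_i\sm_i$. Consequently $V$ factors through the evaluation map and becomes an irreducible module for the direct sum of Lie superalgebras $\bigoplus_i(\q\otimes A/\sm_i)$. Iterating the classification of irreducible finite-dimensional modules over direct sums of Lie superalgebras (\cite[Prop.~8.4]{che95}, as recalled just before Lemma~\ref{ass.lemma}), together with the associativity of $\widehat{\otimes}$ provided by Lemma~\ref{ass.lemma}, writes $V$ as $V_1\widehat{\otimes}\cdots\widehat{\otimes}V_k$ with each $V_i$ an irreducible $\q\otimes A/\sm_i$-module. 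Pulling back along the evaluation map then yields the desired isomorphism $V\cong\widehat{\ev}_{\sm_1,\dotsc,\sm_k}(V_1,\dotsc,V_k)$.

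The main obstacle is the forward-direction claim that $\Ann_A(V)$ is exactly $\prod_i\sm_i$ and not something larger, since this is precisely what ensures the annihilator is radical (i.e.\ that $V$ has \emph{reduced} support rather than merely finite support). The key ingredient is that a nontrivial irreducible module over the simple Lie superalgebra $\q$ is automatically faithful, so no nonzero coordinate subsum of $\bigoplus_i(\q\otimes A/\sm_i)$ can act trivially on $V$ without making the corresponding tensor factor trivial; handling trivial factors cleanly is a minor bookkeeping issue. In the reverse direction, the Jacobson property of $A$ is what converts the geometric hypothesis of reduced finite support into the algebraic statement that $I$ is a finite intersection of maximal ideals, after which the direct-sum classification does the rest.
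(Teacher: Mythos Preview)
Your proof is correct and follows essentially the same route as the paper's. The reverse direction is identical in substance: factor through the evaluation map using that a radical ideal of finite support equals $\prod_i\sm_i$, then apply \cite[Prop.~8.4]{che95}. In the forward direction you do more than the paper: the paper simply observes that $I=\prod_i\sm_i$ is radical and that $(\q\otimes I)V=0$, which strictly speaking leaves implicit why $\Ann_A(V)$ itself (rather than merely $I$) is radical. Your argument, pinning down $\Ann_A(V)=\prod_i\sm_i$ exactly via faithfulness of nontrivial irreducibles over the simple superalgebra $\q$, closes that small gap; the paper's version is implicitly fine too, since any ideal containing $I$ corresponds to an ideal of $A/I\cong\C^k$ and is therefore radical, but your treatment is cleaner.
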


\begin{proof}
  Let $\rho$ be an irreducible finite-dimensional representation of $\q\otimes A$. Assume
  \[
  \rho\cong\widehat{\ev}_{\sm_1,\dotsc,\sm_k}(\rho_1,\dotsc,\rho_k),
  \]
where $\sm_1,\dotsc,\sm_k$ are pairwise distinct maximal ideals of $A$ and $\rho_i$ is an irreducible representation of $\q\otimes A/\sm_i$. Let $I=\prod_{i=1}^k \sm_i$.  Then $\Supp(I)=\{\sm_1,\dotsc, \sm_k \}$ and $\rho(\q\otimes I)=0$.  Thus $\rho$ has finite support. Furthermore we have that $\sqrt{I} = \bigcap_{i=1}^k \sm_i = \prod_{i=1}^k \sm_i = I$ and hence $I$ is a radical ideal.  This proves the forward implication.

  Suppose now that $\rho(\q\otimes I)=0$ for some radical ideal $I$ of $A$ of finite support. Thus $I=\sqrt{I}=\prod_{i=1}^{n}\sm_i$ for some distinct maximal ideals $\sm_1,\dotsc,\sm_k$ of $A$. Hence, $\rho$ factors through the map
  \[
    \q\otimes A \twoheadrightarrow (\q\otimes A)/ \left(\q\otimes \prod_{i=1}^k \sm_i\right)\cong \bigoplus_{i=1}^k (\q\otimes A/\sm_i).
  \]
  Then, by \cite[Prop.~8.4]{che95}, there exist irreducible finite-dimensional representations $\rho_i$ of $\q\otimes A/\sm_i$, $i=1,\dotsc,k$, such that
  \[
    \rho\cong\widehat{\bigotimes}_{i=1}^k \ev_{\sm_i}(\rho_i) \cong \widehat{\ev}_{\sm_1,\dotsc,\sm_k}(\rho_1,\dotsc,\rho_k).
  \]
  Thus $\rho$ is isomorphic to a representation of the form~\eqref{irred.prod.ev.rep}. This completes the proof of the reverse implication.
\end{proof}

\begin{defn}[$X_*$]
  Let $X_*$ denote the set of finite subsets $\sM\subseteq \MaxSpec(A)$ having the property that $\sm'\notin \Gamma \sm$ for distinct $\sm,\sm'\in \sM$.
\end{defn}

\begin{lemma}[{\cite[Lem.~5.6]{sav14}}] \label{lem:eval-surjective}
  If $\{\sm_1,\dotsc,\sm_k\} \in X_*$, then the map $\widehat{\ev}_{\sm_1,\dotsc,\sm_k}^\Gamma$ is surjective.
\end{lemma}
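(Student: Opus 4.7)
The plan is to combine the Chinese Remainder Theorem with an averaging argument over $\Gamma$. First, I would observe that the hypothesis $\{\sm_1,\dotsc,\sm_k\} \in X_*$, combined with the freeness of the $\Gamma$-action on $\MaxSpec(A)$, forces the set
\[
  \mathcal{S} := \{\gamma \sm_i : \gamma \in \Gamma,\ 1 \le i \le k\}
\]
to consist of exactly $|\Gamma| k$ pairwise distinct maximal ideals: the distinct-orbit condition prevents $\gamma \sm_i = \sm_j$ for $i \ne j$, while freeness prevents $\gamma \sm_i = \sm_i$ unless $\gamma = e$.

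Second, since distinct maximal ideals are pairwise coprime, the Chinese Remainder Theorem yields a surjection $A \twoheadrightarrow \bigoplus_{\sm \in \mathcal{S}} A/\sm$, and tensoring over $\C$ with $\q$ preserves surjectivity. Hence, given any $(z_1,\dotsc,z_k) \in \bigoplus_{i=1}^k (\q \otimes A/\sm_i)$, I would lift it to some $\tilde z \in \q \otimes A$ satisfying
\[
  \tilde z \equiv z_i \pmod{\q \otimes \sm_i} \quad (1 \le i \le k), \qquad \tilde z \in \q \otimes \gamma \sm_i \quad (\gamma \in \Gamma \setminus \{e\},\ 1 \le i \le k);
\]
in other words, $\tilde z$ realises the prescribed values at $\sm_1,\dotsc,\sm_k$ and vanishes at every other point of $\mathcal{S}$.

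Finally, I would form $z := \sum_{\gamma \in \Gamma} \gamma(\tilde z)$, which is manifestly $\Gamma$-invariant. The central computation is to verify $\ev_{\sm_i}(z) = z_i$ for each $i$. Since $\gamma$ acts diagonally on $\q \otimes A$ and carries $\q \otimes \gamma^{-1}\sm_i$ onto $\q \otimes \sm_i$, reducing $\gamma(\tilde z)$ modulo $\q \otimes \sm_i$ amounts to first reducing $\tilde z$ modulo $\q \otimes \gamma^{-1}\sm_i$ and then applying the induced isomorphism $\q \otimes A/\gamma^{-1}\sm_i \xrightarrow{\sim} \q \otimes A/\sm_i$. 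For $\gamma = e$ this gives $z_i$; for $\gamma \ne e$, the first paragraph shows that $\gamma^{-1}\sm_i \in \mathcal{S} \setminus \{\sm_1,\dotsc,\sm_k\}$, so $\tilde z$ vanishes there by construction and this term contributes $0$. Summing over $\gamma$ yields $\ev_{\sm_i}(z) = z_i$ for each $i$, as required. The only real obstacle is the bookkeeping: one must check that averaging does not disturb the chosen image; the distinct-orbit hypothesis is precisely what kills the cross terms from $\gamma \ne e$, so no division by $|\Gamma|$ is ever needed.
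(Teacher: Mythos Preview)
Your argument is correct and is precisely the standard proof: Chinese Remainder over the full $\Gamma$-saturated set $\mathcal{S}$, followed by averaging over $\Gamma$, with the distinct-orbit and freeness hypotheses guaranteeing that the cross terms vanish so that no division by $|\Gamma|$ is required. The paper does not reproduce a proof here but simply cites \cite[Lem.~5.6]{sav14}, where exactly this argument is carried out (for an arbitrary target Lie superalgebra), so your approach coincides with the intended one.
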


Let ${\mathcal R}(\q)$ denote the set of isomorphism classes of irreducible finite-dimensional representations of $\q$. Then $\Gamma$ acts on $\mathcal{R}(\q)$ by
\[
  \Gamma\times \mathcal{R}(\q)\rightarrow \mathcal{R}(\q),\quad (\gamma,[\rho])\mapsto \gamma [\rho]:=[\rho\circ \gamma^{-1}],
\]
where $[\rho]\in \mathcal{R}(\q)$ denotes the isomorphism class of a representation $\rho$ of $\q$.

\begin{defn}[$\mathcal{E}(\q, A)$, $\mathcal{E}(\q, A)^\Gamma$]
  Let $\mathcal{E}(\q, A)$ denote the set of finitely supported functions $\Psi \colon \MaxSpec(A) \rightarrow \mathcal{R}(\q)$ and let $\mathcal{E}(\q, A)^\Gamma$ denote the subset of $\mathcal{E}(\q, A)$ consisting of those functions that are $\Gamma$-equivariant.  Here the support of $\Psi$, denoted $\Supp(\Psi)$, is the set of all $\sm \in \MaxSpec(A)$ for which $\Psi(\sm) \neq 0$, where $0$ denotes the isomorphism class of the trivial (one-dimensional) representation.
\end{defn}

If $\rho$ and $\rho'$ are isomorphic representations of $\q$, then the representations $\ev_\sm(\rho)$ and $\ev_\sm(\rho')$ are also isomorphic, for any $\sm \in \MaxSpec A$.  Therefore, for $[\rho]\in \mathcal{R}(\q)$, we can define $\ev_\sm[\rho]$ to be the isomorphism class of $\ev_\sm(\rho)$, and this is independent of the representative $\rho$.  For $\Psi \in \mathcal{E}(\q, A)$ such that $\Supp(\Psi)=\{\sm_1,\dotsc, \sm_k\}$, we define $\widehat{\ev}_{\Psi}$ to be the isomorphism class of ${\widehat{\ev}_{\sm_1,\dotsc, \sm_k}}(\Psi(\sm_1),\dotsc,\Psi(\sm_k))$, which is well-defined by the above comments and Remark~\ref{rmk:single-point-decomp}. If $\Psi$ is the map that is identically $0$ on $\MaxSpec(A)$, then, by definition, $\widehat{\ev}_{\Psi}$ is the isomorphism class of the trivial (one-dimensional) representation of $\q\otimes A$.

\begin{lemma} \label{lem:evaluation-invariance}
  Let $\Psi\in {\mathcal E}(\q,A)^\Gamma$ and $\sm\in \MaxSpec(A)$. Then, for all $\gamma\in \Gamma$,
  \[
    \widehat{\ev}_\sm(\Psi(\sm)) = \widehat{\ev}_{\gamma \sm}(\gamma\Psi(\sm)) = \widehat{\ev}_{\gamma \sm}(\Psi({\gamma \sm})).
  \]
\end{lemma}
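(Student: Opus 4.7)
The second equality is immediate from the definition of $\Gamma$-equivariance of $\Psi$: by hypothesis, $\Psi(\gamma\sm) = \gamma\Psi(\sm)$ as classes in $\mathcal{R}(\q)$, so applying $\widehat{\ev}_{\gamma\sm}$ to either side gives the same isomorphism class. The content of the lemma is the first equality, which I would interpret (as is made implicit by the surrounding discussion and the application of this lemma to representations of $(\q\otimes A)^\Gamma$) as the statement that the two representations of $\q\otimes A$ coincide after restriction to $(\q\otimes A)^\Gamma$, and hence have the same isomorphism class on that subalgebra.

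The plan is to fix any representative $\rho\colon \q \to \gl(V)$ of the class $\Psi(\sm)$, and show that $\rho\circ\ev_\sm$ and $(\gamma\rho)\circ\ev_{\gamma\sm}$ restrict to the \emph{same} map $(\q\otimes A)^\Gamma \to \gl(V)$. The key computational step is the $\Gamma$-equivariance of evaluation, namely the identity
\[
    \ev_{\gamma\sm} \circ \gamma = \gamma \circ \ev_\sm \quad \text{on all of } \q\otimes A,
\]
where on the left $\gamma$ denotes the diagonal action on $\q\otimes A$ and on the right the given action on $\q$. This is a one-line verification on a simple tensor $x\otimes f$: the left side equals $(\gamma f)(\gamma\sm)\,\gamma x = f(\sm)\,\gamma x$ using the convention $(\gamma f)(\sm') = f(\gamma^{-1}\sm')$ (so that $\gamma\sm$ is the maximal ideal at $\gamma \cdot z$ when $\sm$ corresponds to $z$), which agrees with the right side.

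Specializing this identity to $z \in (\q\otimes A)^\Gamma$, the fact that $\gamma z = z$ gives $\ev_{\gamma\sm}(z) = \gamma\cdot\ev_\sm(z)$, and then the definition $\gamma\rho = \rho\circ\gamma^{-1}$ yields
\[
    (\gamma\rho)(\ev_{\gamma\sm}(z)) = \rho\bigl(\gamma^{-1}\cdot\ev_{\gamma\sm}(z)\bigr) = \rho(\ev_\sm(z)).
\]
Hence the two representations literally agree on $(\q\otimes A)^\Gamma$, so their isomorphism classes are equal, completing the first equality. The only real obstacle here is bookkeeping: one must consistently track the conventions for the $\Gamma$-actions on $\MaxSpec(A)$ (via pullback of functions), on $\q$ (directly), and on $\mathcal{R}(\q)$ (via $\gamma[\rho] = [\rho\circ\gamma^{-1}]$); once these are pinned down, the proof reduces to the short computation above.
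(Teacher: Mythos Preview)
Your proof is correct and is essentially the argument of \cite[Lem.~4.13]{neh-sav-sen12} to which the paper defers; you have simply written out the details. In particular, you correctly identify that the first equality must be read as an equality of isomorphism classes of $(\q\otimes A)^\Gamma$-representations (since the two $\q\otimes A$-representations have distinct supports $\{\sm\}$ and $\{\gamma\sm\}$ and so cannot be isomorphic as $\q\otimes A$-modules unless $\gamma\sm=\sm$), and your verification of the equivariance identity $\ev_{\gamma\sm}\circ\gamma=\gamma\circ\ev_\sm$ together with the definition $\gamma[\rho]=[\rho\circ\gamma^{-1}]$ is exactly the computation underlying the cited lemma.
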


\begin{proof}
  The proof is the same of that in \cite[Lem.~4.13]{neh-sav-sen12}, where $\q$ is replaced by a finite-dimensional Lie algebra.
\end{proof}

\begin{defn}[${\widehat{\ev}}_{\Psi}^\Gamma$]
  Let $\Psi\in {\mathcal E}(\q,A)^\Gamma$ and let $\sM=\{\sm_1,\dotsc,\sm_k\}\in X_*$ contain one element from each $\Gamma$-orbit in $\Supp(\Psi)$.  We define ${\widehat{\ev}}_{\Psi}^\Gamma:={\widehat{\ev}}_{\sm_1,\dotsc,\sm_k}^\Gamma (\Psi(\sm_1),\dotsc,\Psi(\sm_k))$. By Lemma~\ref{lem:evaluation-invariance}, ${\widehat{\ev}}_{\Psi}^\Gamma$ is independent of the choice of $\sM$. If $\Psi=0$, we define ${\widehat{\ev}}_{\Psi}^\Gamma$ to be the isomorphism class of the trivial (one-dimensional) representation of $(\q\otimes A)^\Gamma$.
\end{defn}

\begin{prop}\label{prop:correspondence-injective}
  The map $\Psi\mapsto \widehat{\ev}_{\Psi}$ from $\mathcal{E}(\q, A)$ to the set of isomorphism classes of irreducible finite-dimensional representations of $\q\otimes A$ is injective.
\end{prop}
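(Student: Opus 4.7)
The plan is to recover $\Psi \in \mathcal{E}(\q,A)$ from the isomorphism class $\widehat{\ev}_\Psi$ in two stages: first recover $\Supp(\Psi) \subseteq \MaxSpec(A)$, and then the values $\Psi(\sm) \in \mathcal{R}(\q)$ at each point in the support.

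For the first stage, I claim $\Supp(\widehat{\ev}_\Psi) = \Supp(\Psi)$. Write $\Supp(\Psi) = \{\sm_1,\dotsc,\sm_k\}$ and $V_j := \Psi(\sm_j)$. The inclusion $\Supp(\widehat{\ev}_\Psi) \subseteq \Supp(\Psi)$ is immediate, since $\widehat{\ev}_\Psi$ factors through $(\q \otimes A)/(\q \otimes \prod_i \sm_i)$, so $\prod_i \sm_i \subseteq \Ann_A(\widehat{\ev}_\Psi)$. For the reverse containment, fix $i$; I must show the $i$-th summand $\q \otimes A/\sm_i$ of $\bigoplus_j (\q \otimes A/\sm_j)$ acts non-trivially on $\widehat{\ev}_\Psi$. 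By construction, $V_1 \widehat{\otimes} \cdots \widehat{\otimes} V_k$ embeds as a submodule of $V_1 \otimes \cdots \otimes V_k$. Restricted to the $i$-th summand (with the other copies of $\q$ acting trivially), $V_1 \otimes \cdots \otimes V_k$ is a direct sum of copies of $V_i$, i.e.\ a semisimple $\q$-module with unique simple constituent $V_i$; a submodule of such a module is semisimple with the same isotype. Since $V_i$ is non-trivial, hence faithful (as $\q$ is simple), the $i$-th summand acts non-trivially on $\widehat{\ev}_\Psi$. This yields $\Ann_A(\widehat{\ev}_\Psi) \subseteq \sm_i$, and so $\sm_i \in \Supp(\widehat{\ev}_\Psi)$.

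For the second stage, suppose $\widehat{\ev}_\Psi \cong \widehat{\ev}_{\Psi'}$. The first stage gives $\Supp(\Psi) = \Supp(\Psi') =: \{\sm_1,\dotsc,\sm_k\}$, so both representations factor through $(\q \otimes A)/(\q \otimes \prod_i \sm_i) \cong \bigoplus_{i=1}^k (\q \otimes A/\sm_i)$ and become isomorphic as modules over this direct sum. Restricting the isomorphism to the $i$-th summand produces an isomorphism of $\q$-modules between the modules that (by the analysis of the first stage) are isotypic of types $\Psi(\sm_i)$ and $\Psi'(\sm_i)$, respectively, forcing $\Psi(\sm_i) \cong \Psi'(\sm_i)$ for each $i$. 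Hence $\Psi = \Psi'$. The most delicate point in the argument is the reverse inclusion in the first stage, where one must rule out the possibility that passing from the tensor product to the proper submodule $V_1 \widehat{\otimes} \cdots \widehat{\otimes} V_k$ kills some summand $\q \otimes A/\sm_i$; this is handled by the semisimplicity of the isotypic $\q$-module $V_i^{\oplus N}$ and the preservation of isotype under taking submodules.
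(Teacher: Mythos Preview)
Your proof is correct and takes a genuinely different route from the paper's. The paper argues by contradiction: assuming $\Psi \neq \Psi'$ but $\widehat{\ev}_\Psi \cong \widehat{\ev}_{\Psi'}$, it picks a point $\sm_1$ where they differ, sets $I = \sm_2 \cdots \sm_k$ (with $\{\sm_1,\dotsc,\sm_k\} = \Supp(\Psi)\cup\Supp(\Psi')$), and restricts to the subalgebra $\mathfrak{a} = \q \otimes I$, which hits only the first factor under evaluation. It then runs a three-case analysis according to whether each of $\ev_{\sm_1}(\Psi(\sm_1)) \otimes \rho$ and $\ev_{\sm_1}(\Psi'(\sm_1)) \otimes \rho'$ is already irreducible or splits as a double, in each case obtaining a contradiction by comparing the $\mathfrak{a}$-restrictions as sums of copies of $\Psi(\sm_1)$ versus $\Psi'(\sm_1)$.

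Your argument reconstructs $\Psi$ from $\widehat{\ev}_\Psi$ directly: you use the embedding $V_1\,\widehat\otimes\cdots\widehat\otimes\,V_k \hookrightarrow V_1\otimes\cdots\otimes V_k$ together with the observation that the full tensor product, restricted to the $i$-th copy of $\q$, is semisimple isotypic of type $V_i$, and that this isotype is inherited by any nonzero submodule. This yields both the support and the values of $\Psi$ without any case splitting. The paper's case analysis and your semisimplicity/isotypic-component argument are really two packagings of the same underlying fact (the restriction to one copy of $\q$ sees only copies of the corresponding $V_i$), but your formulation is cleaner and scales without modification to an arbitrary number of factors. The paper's approach, on the other hand, stays closer to the explicit structure of $\widehat\otimes$ and does not invoke general semisimplicity statements for $\q$-modules.
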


\begin{proof}
  If $\Psi\neq \Psi'\in\mathcal{E}(\q, A)$, then there exists $\sm\in \MaxSpec (A)$ such that $\Psi(\sm)\neq \Psi'(\sm)$.  Without loss of generality, we may assume that $\Psi(\sm) \neq 0$. Let $\Supp(\Psi)\cup \Supp(\Psi')=\{\sm_1,\dotsc,\sm_{k}\}$, where $\sm=\sm_1$ and consider the following ideal of $A$:
  \[
    I=\sm_2 \dotsm\sm_k.
  \]
  Note that $\mathfrak{a}=\q\otimes I$ is a Lie subalgebra of $\q \otimes A$ such that
  $\ev_{\sm}(\mathfrak{a})\cong\q$ and $\ev_{\sm_j}(\mathfrak{a})=0$ for $j=2,\dotsc,k$.

  Suppose that $\widehat{\ev}_\Psi\cong \widehat{\ev}_{\Psi'}$, and define
  \[
    \rho :=\ev_{\sm_2}(\Psi(\sm_2))\widehat{\otimes}\cdots\widehat{\otimes}\ev_{\sm_k}(\Psi(\sm_k)) \quad \text{and} \quad \rho':=\ev_{\sm_2}(\Psi'(\sm_2))\widehat{\otimes}\cdots\widehat{\otimes}\ev_{\sm_k}(\Psi'(\sm_k)),
  \]
  with associated modules $V$ and $V'$, respectively.  Then $\rho(\mathfrak{a})=\rho'(\mathfrak{a})=0$.  We divide the proof into three cases.

  For the first case, assume that we have isomorphisms of $\q \otimes A$-modules
  \[
    \ev_{\sm_1}(\Psi(\sm_1))\otimes \rho \cong \hat{\rho} \oplus \hat{\rho} \quad\text{and}\quad \ev_{\sm_1}(\Psi'(\sm_1))\otimes \rho' \cong \hat{\rho}' \oplus \hat{\rho}',
  \]
  where $\hat{\rho}$ and $\hat{\rho}'$ are subrepresentations of $\ev_{\sm_1}(\Psi(\sm_1))\otimes \rho$ and $\ev_{\sm_1}(\Psi'(\sm_1))\otimes \rho'$, respectively.  Since $\widehat{\ev}_\Psi\cong \widehat{\ev}_{\Psi'}$, we must have $\hat{\rho} \cong \hat{\rho}'$, and so
  \begin{multline*}
    \ev_{\sm_1}(\Psi(\sm_1))^{\oplus \dim V} \cong (\ev_{\sm_1}(\Psi(\sm_1)) \otimes \rho)|_\mathfrak{a} \cong (\hat \rho \oplus \hat \rho)|_\mathfrak{a}
    \cong (\hat \rho' \oplus \hat \rho')|_\mathfrak{a} \\
    \cong (\ev_{\sm_1}(\Psi'(\sm_1)) \otimes \rho')|_\mathfrak{a}  \cong \ev_{\sm_1}(\Psi'(\sm_1))^{\oplus \dim V'},
  \end{multline*}
  where the first isomorphism follows from the fact that $\rho(\mathfrak{a})=0$ and the last follows from the fact that $\rho'(\mathfrak{a})=0$.  But this is a contradiction, since $\Psi(\sm_1)\neq \Psi'(\sm_1)$.

  For the second case, assume
  \[
    \ev_{\sm_1}(\Psi'(\sm_1))\otimes \rho' \text{ is irreducible } \quad\text{and}\quad \ev_{\sm_1}(\Psi(\sm_1))\otimes \rho \cong \hat \rho \oplus \hat \rho,
  \]
  where $\hat \rho \subseteq \ev_{\sm_1}(\Psi(\sm_1)) \otimes \rho$ is a subrepresentation. Thus $\hat \rho \cong \ev_{\sm_1}(\Psi'(\sm_1)) \otimes \rho'$, which implies that
  \[
    \ev_{\sm_1}(\Psi(\sm_1))^{\oplus \dim V} \cong (\hat \rho \oplus \hat \rho)|_\mathfrak{a}
    \cong (\ev_{\sm_1}(\Psi'(\sm_1))\otimes \rho')^{\oplus 2}|_\mathfrak{a} \cong \ev_{\sm_1}(\Psi'(\sm_1))^{\oplus 2\dim V'}.
  \]
  So again we have a contradiction.

  The remaining case, when both $\ev_{\sm_1}(\Psi'(\sm_1))\otimes \rho'$ and $\ev_{\sm_1}(\Psi(\sm_1))\otimes \rho$ are irreducible $\q \otimes A$-modules, is similar.
\end{proof}

\begin{cor} \label{cor:twisted-eval-injective}
  For all $\Psi \in \mathcal{E}(\q,A)^\Gamma$, we have that ${\widehat{\ev}}_\Psi^\Gamma$ is the isomorphism class of an irreducible finite-dimensional representation.  Furthermore, the map $\Psi\mapsto \widehat{\ev}_{\Psi}^\Gamma$ from $\mathcal{E}(\q, A)^\Gamma$ to the set of isomorphism classes of irreducible finite-dimensional representations of $(\q\otimes A)^\Gamma$ is injective.
\end{cor}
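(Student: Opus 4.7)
The plan is to establish the two claims—irreducibility of $\widehat{\ev}_\Psi^\Gamma$ and injectivity of $\Psi\mapsto\widehat{\ev}_\Psi^\Gamma$—in turn. For irreducibility, I would fix the orbit representatives $\sM=\{\sm_1,\dotsc,\sm_k\}\in X_*$ used in the definition, so that the representation $\widehat{\ev}_\Psi^\Gamma$ acts on $\widehat{\bigotimes}_{i=1}^k V_i$ via the composition of $\ev^\Gamma_{\sm_1,\dotsc,\sm_k}$ with the $\bigoplus_i(\q\otimes A/\sm_i)$-action on the irreducible product. By \cite[Prop.~8.4]{che95}, $\widehat{\bigotimes}_i V_i$ is irreducible over $\bigoplus_i(\q\otimes A/\sm_i)$, and Lemma~\ref{lem:eval-surjective} gives surjectivity of $\ev^\Gamma_{\sm_1,\dotsc,\sm_k}$; pulling back an irreducible module along a surjective Lie superalgebra homomorphism yields an irreducible module, so $\widehat{\ev}_\Psi^\Gamma$ is irreducible.

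For injectivity, given $\Psi\neq\Psi'\in\mathcal{E}(\q,A)^\Gamma$, I would select a common $\sM=\{\sm_1,\dotsc,\sm_k\}\in X_*$ containing one representative from every $\Gamma$-orbit meeting $\Supp(\Psi)\cup\Supp(\Psi')$. In view of Lemma~\ref{lem:evaluation-invariance}, and since adjoining a trivial factor to an irreducible product does not alter the isomorphism class, both $\widehat{\ev}_\Psi^\Gamma$ and $\widehat{\ev}_{\Psi'}^\Gamma$ can be realized on this common $\sM$ by allowing some $\Psi(\sm_i)$ or $\Psi'(\sm_i)$ to be the trivial class. Assume for contradiction that these two $(\q\otimes A)^\Gamma$-modules are isomorphic. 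Both factor through the same surjection $(\q\otimes A)^\Gamma\twoheadrightarrow\bigoplus_i(\q\otimes A/\sm_i)$ provided by Lemma~\ref{lem:eval-surjective}, so any $(\q\otimes A)^\Gamma$-isomorphism is automatically $\bigoplus_i(\q\otimes A/\sm_i)$-linear; hence $\widehat{\bigotimes}_i\Psi(\sm_i)\cong\widehat{\bigotimes}_i\Psi'(\sm_i)$ as modules over the direct sum.

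The main obstacle is then the final uniqueness step: showing that the tuples $(\Psi(\sm_i))_i$ and $(\Psi'(\sm_i))_i$ must coincide whenever the corresponding irreducible products are isomorphic. Note that $\Gamma$-equivariance forces these tuples to differ in at least one coordinate, since $\Psi\neq\Psi'$. I plan to resolve this by applying Proposition~\ref{prop:correspondence-injective} to the finitely generated quotient algebra $\bar A := A/(\sm_1\cdots\sm_k)\cong\bigoplus_i A/\sm_i$, whose maximal ideals correspond bijectively to $\sM$; the injectivity of $\Psi\mapsto\widehat{\ev}_\Psi$ for $\bar A$ supplied by that proposition contradicts the isomorphism of $\widehat{\bigotimes}_i\Psi(\sm_i)$ and $\widehat{\bigotimes}_i\Psi'(\sm_i)$ as $\q\otimes\bar A$-modules, completing the proof.
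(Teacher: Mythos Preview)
Your proposal is correct and follows essentially the same route as the paper. The paper's proof is terser but identical in substance: it invokes Lemma~\ref{lem:eval-surjective} for irreducibility, then for injectivity chooses a common $\sM\in X_*$, uses the surjectivity of $\ev^\Gamma_{\sm_1,\dotsc,\sm_k}$ to upgrade the $(\q\otimes A)^\Gamma$-isomorphism to an isomorphism of the untwisted evaluation modules, and applies Proposition~\ref{prop:correspondence-injective}. The only cosmetic difference is that the paper applies Proposition~\ref{prop:correspondence-injective} directly over $A$ (viewing the untwisted evaluation modules as coming from elements of $\mathcal{E}(\q,A)$ supported on $\sM$), whereas you pass to the quotient $\bar A=A/(\sm_1\cdots\sm_k)$; since $\MaxSpec(\bar A)$ is exactly $\sM$, the two formulations are equivalent.
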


\begin{proof}
  The first statement follows from Lemma~\ref{lem:eval-surjective} and the definition of the irreducible product.  Suppose $\Psi,\Psi' \in \mathcal{E}(\q, A)^\Gamma$ such that $\widehat{\ev}_\Psi^\Gamma = \widehat{\ev}_{\Psi'}^\Gamma$.  Let $\sM =\{\sm_1,\dotsc,\sm_k\} \in X_*$ contain one element of each $\Gamma$-orbit in $\Supp (\Psi) \cup \Supp (\Psi')$.  Then $\widehat{\ev}_\Psi^\Gamma$ and $\widehat{\ev}_{\Psi'}^\Gamma$ are the restrictions to $(\g \otimes A)^\Gamma$ of $\widehat{\ev}_{\sm_1,\dotsc,\sm_k}(\Psi(\sm_1),\dotsc,\Psi(\sm_k))$ and $\widehat{\ev}_{\sm_1,\dotsc,\sm_k}(\Psi'(\sm_1),\dotsc,\Psi'(\sm_k))$, respectively.  By Lemma~\ref{lem:eval-surjective}, it follows that $\widehat{\ev}_{\sm_1,\dotsc,\sm_k}(\Psi(\sm_1),\dotsc,\Psi(\sm_k)) = \widehat{\ev}_{\sm_1,\dotsc,\sm_k}(\Psi'(\sm_1),\dotsc,\Psi'(\sm_k))$.  Then, by Proposition~\ref{prop:correspondence-injective}, we have $\Psi(\sm_i) = \Psi'(\sm_i)$ for $i=1,\dotsc,k$.  Thus $\Psi = \Psi'$.
\end{proof}

\begin{rmk}
  If the target Lie superalgebra $\q$ is replaced by a Lie algebra or a basic classical Lie superalgebra $\g$, then the tensor product of irreducible finite-dimensional representations with disjoint supports is always irreducible (see \cite[Prop.~4.9]{neh-sav-sen12} for Lie algebras and \cite[Prop.~4.12]{sav14} for basic classical Lie superalgebras). In particular, the evaluation representation $\ev_\Psi$ is an irreducible finite-dimensional representation for all $\Psi\in {\mathcal E}(\g,A)$, where $\ev_\Psi$ is defined by replacing the irreducible product by the tensor product in the definition of $\widehat{\ev}_\Psi$.
\end{rmk}

\section{Classification of finite-dimensional representations}
\label{sec:fd-rep-classification}

In this section we present our main result: the classification of the irreducible finite-dimensional $\q\otimes A$-modules and $(\q \otimes A)^\Gamma$-modules.  We assume that $A$ is finitely generated.

\begin{thm} \label{thm:untwisted-classification}
  The map
  \begin{equation} \label{eq:classification-bijection}
    \mathcal{E}(\q, A)  \rightarrow \mathcal{R}(\q\otimes A),\qquad  \Psi \mapsto \widehat{\ev}_\Psi,
  \end{equation}
  is a bijection, where $\mathcal{R}(\q\otimes A)$ is the set of isomorphism classes of irreducible finite-dimensional representations of $\q\otimes A$. In particular, all irreducible finite-dimensional representations are representations of the form \eqref{irred.prod.ev.rep}.
\end{thm}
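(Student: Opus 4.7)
The plan is to establish the bijection by proving injectivity and surjectivity separately. Injectivity is immediate from Proposition~\ref{prop:correspondence-injective}; well-definedness of the target (that $\widehat{\ev}_\Psi$ is the isomorphism class of an irreducible finite-dimensional module) follows from the construction of the irreducible product together with \cite[Prop.~8.4]{che95} and Lemma~\ref{ass.lemma}.

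For surjectivity, let $V$ be an irreducible finite-dimensional $\q \otimes A$-module. Corollary~\ref{cor:fd-finite-codim-ideal} gives that $I := \Ann_A(V)$ is of finite codimension in $A$, and Lemma~\ref{lem:assoc-alg-facts}\eqref{lem-item:finite-codim-finite} then implies that $\Supp(I) = \{\sm_1, \dotsc, \sm_k\}$ is finite. Thanks to Proposition~\ref{prop:eval-reduced-support}, it suffices to show $V$ has reduced support, i.e., that $I$ is a radical ideal: in that case $V \cong \widehat{\ev}_{\sm_1,\dotsc,\sm_k}(V^1,\dotsc,V^k)$ for irreducible finite-dimensional $\q \otimes A/\sm_i \cong \q$-modules $V^i$, and setting $\Psi(\sm_i) := [V^i]$ with $\Psi(\sm) := 0$ otherwise produces $\Psi \in \mathcal{E}(\q, A)$ with $\widehat{\ev}_\Psi \cong V$.

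To show $I$ is radical, one reduces to the local case: write $V \cong V(\psi)$ via Proposition~\ref{prop:irred-module-hw}, so that by Lemma~\ref{lem:psi-0-ideal} the claim becomes $\psi(\h_{\bar 0} \otimes \sqrt I) = 0$. The Artinian quotient $A/I$ decomposes via the Chinese Remainder Theorem as a product $\prod_i B_i$ of local Artinian commutative $\C$-algebras with maximal ideals $\mathfrak{n}_i$, so $\q \otimes (A/I) = \bigoplus_i (\q \otimes B_i)$, and iterated application of \cite[Prop.~8.4]{che95} together with Lemma~\ref{ass.lemma} identifies $V$ with an irreducible product $V^1 \widehat{\otimes} \cdots \widehat{\otimes} V^k$, where each $V^i$ is an irreducible finite-dimensional $\q \otimes B_i$-module. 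The problem is therefore reduced to the following \emph{local lemma}: for any finite-dimensional local commutative $\C$-algebra $B$ with maximal ideal $\mathfrak{n}$, every irreducible finite-dimensional $\q \otimes B$-module $W$ satisfies $(\q \otimes \mathfrak{n}) W = 0$.

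The proof of this local lemma is the main obstacle; I would argue it by induction on the nilpotency index $k$ of $\mathfrak{n}$, the case $k = 1$ being vacuous. For $k \ge 2$, the ideal $L := \q \otimes \mathfrak{n}^{k-1}$ of $\q \otimes B$ is an \emph{abelian} Lie superalgebra because $(\mathfrak{n}^{k-1})^2 \subseteq \mathfrak{n}^k = 0$; showing $L W = 0$ lets $W$ descend to a $\q \otimes B/\mathfrak{n}^{k-1}$-module, whose maximal ideal has nilpotency index $k-1$, so the inductive hypothesis applies. To prove $L W = 0$: by Lemma~\ref{lem:solvablemod} applied to the abelian (hence solvable) superalgebra $L$ (for which $[L_{\bar 1}, L_{\bar 1}] = 0 = [L_{\bar 0}, L_{\bar 0}]$), every irreducible finite-dimensional $L$-submodule of $W$ is one-dimensional, so there is a character $\chi \colon L \to \C$ (with $\chi|_{L_{\bar 1}} = 0$) possessing a $\chi$-eigenvector in $W$. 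The generalized $\chi$-eigenspace $\widetilde{W}_\chi$ is then $\q \otimes B$-stable and, by irreducibility, equals $W$. A trace computation pins down $\chi$: for any $z \in L$ and even $y \in \q_{\bar 0} \otimes B$ the operator $\rho([z,y])$ has trace zero and also acts on $W = \widetilde{W}_\chi$ as $\chi([z,y])\id$ plus a nilpotent, so $\chi([z,y]) = 0$; combined with $[\q_{\bar 0}, \q_{\bar 0}] = \q_{\bar 0}$ this forces $\chi|_{L_{\bar 0}} = 0$, whence $\chi = 0$. Thus $L$ acts locally nilpotently on $W$, the common null space $W_0 := \{w \in W : L w = 0\}$ is nonzero by Engel's theorem, and the identity $z(yw) = [z,y] w$ for $z \in L$, $y \in \q \otimes B$, $w \in W_0$ shows $W_0$ is $\q \otimes B$-stable; irreducibility then yields $W_0 = W$, completing the induction.
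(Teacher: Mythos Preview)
Your argument is correct, but it takes a longer route than the paper's. The paper avoids both the localization to the $B_i$ and the induction on the nilpotency index. Instead, after obtaining a finite-codimensional ideal $I$ with $(\q\otimes I)V=0$, it sets $J=\sqrt{I}$ and works directly with the ideal $\q\otimes(J/I)$ of $\q\otimes(A/I)$. Since $J^k\subseteq I$ for some $k$ (Lemma~\ref{lem:assoc-alg-facts}\eqref{lem-item:Noetherian-power-radical}), this ideal is solvable, and because $\q_{\bar 0}$ is perfect one has $[(\q\otimes(J/I))_{\bar 1},(\q\otimes(J/I))_{\bar 1}]\subseteq[(\q\otimes(J/I))_{\bar 0},(\q\otimes(J/I))_{\bar 0}]$. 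A single application of Lemma~\ref{lem:solvablemod} then produces a nonzero $v\in V$ and a character $\theta$ of $\q\otimes(J/I)$ with $\mu v=\theta(\mu)v$. The vanishing of $\theta$ is established by: (i) $\theta(\n^\pm\otimes J)=0$ because $V$ has only finitely many weights, and (ii) $\theta(\q_{\bar 0}\otimes J)=0$ because $\q_{\bar 0}$ is simple and so $\q_{\bar 0}\otimes J$ admits no nontrivial one-dimensional representation. Then $(\q\otimes J)v=0$ and Lemma~\ref{lem:ideal-annihilate-vector} finishes.

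What your approach buys is that it never uses the weight-space structure of $V$ (you replace the argument ``$\theta(\n^\pm\otimes J)=0$ by finiteness of weights'' with a trace argument and Engel's theorem), so it would transplant more readily to targets without a good triangular decomposition. What the paper's approach buys is brevity: no Chinese Remainder decomposition, no generalized eigenspaces, no Engel, and no induction---Lemma~\ref{lem:solvablemod} handles the whole nilradical $J/I$ in one stroke rather than peeling off $\mathfrak n^{k-1}$ layer by layer.
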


\begin{proof}
  By Proposition~\ref{prop:correspondence-injective}, it is enough to show that all irreducible finite-dimensional representations of $\q \otimes A$ are of the form \eqref{irred.prod.ev.rep}.  Thus, it suffices, by Proposition~\ref{prop:eval-reduced-support}, to show that, for every irreducible finite-dimensional $\q \otimes A$-module $V$, we have $(\q \otimes J)V=0$ for some radical ideal $J\subseteq A$ of finite-codimension.

  By Corollary~\ref{cor:fd-finite-codim-ideal}, we have that $(\q\otimes I)V=0$ for some ideal $I$ of $A$ of finite codimension.  Let $J=\sqrt{I}$ be the radical of $I$.  To prove that $(\q\otimes J)V=0$, it suffices, by Lemma~\ref{lem:ideal-annihilate-vector}, to show that $(\q\otimes J)v=0$ for some nonzero vector $v \in V$.

  Consider now $V$ as a $\q\otimes A/I$-module.  We will show that $(\q\otimes (J/I))v=0$ for some nonzero $v\in V$. Since $A$ is finitely generated, and hence Noetherian, we have $J^k \subseteq I$ for some $k \in \N$, by Lemma~\ref{lem:assoc-alg-facts}\eqref{lem-item:Noetherian-power-radical}.  Hence, $(\q\otimes (J/I))^{(k)}=\q^{(k)}\otimes (J^k/I)=0$, and so $\q\otimes (J/I)$ is solvable. On the other hand, since $\q_{\bar{0}}$ is a simple Lie algebra, we have
  \begin{multline*}
    [(\q\otimes (J/I))_{\bar{1}},(\q\otimes (J/I))_{\bar{1}}] = [\q_{\bar 1},\q_{\bar 1}]\otimes (J^{2}/I)
     \subseteq \q_{\bar 0}\otimes (J^{2}/I) \\
     = [\q_{\bar 0},\q_{\bar 0}]\otimes (J^{2}/I)
     = [(\q\otimes (J/I))_{\bar{0}},(\q\otimes (J/I))_{\bar{0}}].
  \end{multline*}
  Then, by Lemma~\ref{lem:solvablemod}, there exists a one-dimensional $\q\otimes (J/I)$-submodule of $V$. Thus, we have a nonzero vector $v\in V$ and $\theta \in (\q\otimes J)^*$, such that
  \[
    \mu v=\theta(\mu)v,\quad \text{for all } \mu \in \q\otimes J.
  \]
  We want to prove that $\theta=0$. If $\mu\in \mathfrak{n}^{\pm}\otimes J$, then $\theta(\mu)^mv=\mu^mv=0$ for $m$ sufficiently large, since $V$ is finite dimensional and hence has a finite number of nonzero weight spaces. Thus $\theta(\mathfrak{n}^\pm\otimes J)=0$.  It remains to show that $\theta(\h\otimes J)=0$. Denote by $\theta'$ the restriction of $\theta$ to $\q_{\bar{0}}\otimes J$. Then $\theta'$ defines a one-dimensional representation of the Lie algebra $\q_{\bar{0}}\otimes J$, and hence the kernel of $\theta'$ must be an ideal of $\q_{\bar{0}}\otimes J$ of codimension at most one. Because $\q_{\bar{0}}$ is a simple finite-dimensional Lie algebra, it is easy to see that this kernel must be all of $\q_{\bar{0}}\otimes J$, and hence $\theta'=0$. Since $\h_{\bar{0}}\subseteq \q_{\bar{0}}$, we also have that $\theta(\h_{\bar{0}}\otimes J)=0$. Therefore, Lemma~\ref{lem:even-odd-ideal-kill} implies that $(\h\otimes J)v=0$.
\end{proof}

Now assume $\Gamma$ is a finite abelian group acting on both $\q$ and $A$ by automorphisms.  We also assume that $\Gamma$ acts freely on $\MaxSpec(A)$.

\begin{prop} \label{prop:twisted-are-restrictions}
  Every finite-dimensional $(\q\otimes A)^\Gamma$-module $V$ is the restriction of a $\q\otimes A$-module $V'$ whose support is an element of $X_*$.  Furthermore, $V$ is irreducible if and only if $V'$ is.
\end{prop}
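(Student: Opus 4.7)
The plan is to find a finite-codimensional $\Gamma$-invariant ideal $I \subseteq A$ such that $(\q \otimes I)^\Gamma V = 0$, exploit the free $\Gamma$-action on $\MaxSpec(A)$ to identify $(\q \otimes A/I)^\Gamma$ with a direct sum indexed by orbit representatives, and then pull this back along a quotient of $\q \otimes A$ to equip $V$ with a $\q \otimes A$-module structure.

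The main obstacle will be the first step: producing a $\Gamma$-invariant cofinite ideal $I \subseteq A$ with $(\q\otimes I)^\Gamma V = 0$. Finite-dimensionality of $V$ only provides a cofinite kernel $K$ inside $(\q\otimes A)^\Gamma$; one needs to refine this to an ideal of the special form $(\q \otimes I)^\Gamma$ for some $\Gamma$-invariant $I \subseteq A$. I would approach this through an equivariant analogue of Corollary~\ref{cor:fd-finite-codim-ideal}, leveraging the fact that $A^\Gamma$ is finitely generated and Noetherian and that $A$ is a finitely generated $A^\Gamma$-module, in order to descend the data encoded in $K$ to a $\Gamma$-invariant cofinite ideal of $A$.

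Once such an $I$ is in hand, let $\sm_1, \dotsc, \sm_k$ be representatives of the $\Gamma$-orbits in the finite $\Gamma$-invariant set $\Supp(I)$; the free action of $\Gamma$ on $\MaxSpec(A)$ then forces $\{\sm_1,\dotsc,\sm_k\} \in X_*$. Since $A/I$ is Artinian, the Chinese Remainder Theorem yields a $\Gamma$-equivariant decomposition $A/I \cong \bigoplus_{\sm \in \Supp(I)} (A/I)_\sm$ in which $\Gamma$ permutes the summands freely within each orbit. Tensoring with $\q$ and taking $\Gamma$-invariants under the diagonal action then gives
\[
  (\q \otimes A/I)^\Gamma \cong \bigoplus_{i=1}^{k} \q \otimes (A/I)_{\sm_i},
\]
via projection to the chosen orbit representatives. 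Let $J$ be the kernel of the composition $A \twoheadrightarrow A/I \twoheadrightarrow \bigoplus_{i=1}^{k}(A/I)_{\sm_i}$; then $\Supp(J) = \{\sm_1,\dotsc,\sm_k\} \in X_*$ and one obtains a canonical algebra isomorphism $\q \otimes A/J \cong (\q \otimes A/I)^\Gamma$.

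I would then define $V'$ to be the vector space $V$ equipped with the $\q \otimes A$-action given by the composition
\[
  \q \otimes A \twoheadrightarrow \q \otimes A/J \xrightarrow{\sim} (\q \otimes A/I)^\Gamma \xrightarrow{\rho} \gl(V),
\]
where the last map comes from the original $(\q\otimes A)^\Gamma$-action on $V$, which factors through $(\q\otimes A/I)^\Gamma$ since taking $\Gamma$-invariants is exact in characteristic zero. A short commutative-diagram check shows that for $z \in (\q\otimes A)^\Gamma$, the two routes to $(\q\otimes A/I)^\Gamma$---via the isomorphism with $\q \otimes A/J$ on one hand, and via the natural projection $(\q\otimes A)^\Gamma \twoheadrightarrow (\q\otimes A/I)^\Gamma$ on the other---agree, so $V'$ restricts to $V$ on $(\q\otimes A)^\Gamma$. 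The support bound $\Supp(V') \subseteq \{\sm_1,\dotsc,\sm_k\} \in X_*$ is immediate from the containment $J \subseteq \Ann_A(V')$. Finally, both the $(\q\otimes A)^\Gamma$-action on $V$ and the $\q\otimes A$-action on $V'$ factor through the same isomorphism $(\q\otimes A/I)^\Gamma \cong \q \otimes A/J$ and therefore have the same image in $\gl(V)$; hence a subspace of $V$ is $(\q\otimes A)^\Gamma$-stable if and only if it is $\q\otimes A$-stable, giving the irreducibility equivalence in both directions.
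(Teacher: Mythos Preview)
Your outline is correct and mirrors the argument of \cite[Prop.~8.5]{sav14} that the paper invokes: produce a $\Gamma$-invariant cofinite ideal $I$ with $(\q\otimes I)^\Gamma V=0$, then use the free $\Gamma$-action on $\Supp(I)$ to identify $(\q\otimes A/I)^\Gamma$ with $\q\otimes A/J$ for an ideal $J$ supported on a set of orbit representatives, exactly as you describe. Your instinct that step~1 is the crux is right; in \cite{sav14} it is established using perfectness of the simple target superalgebra together with $A$ being module-finite over $A^\Gamma$, and the argument there works directly with the kernel of the representation (hence for arbitrary finite-dimensional $V$, not only irreducible ones), so you should not expect to reduce to an analogue of Corollary~\ref{cor:fd-finite-codim-ideal}.
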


\begin{proof}
  The proof is the same as the proof of \cite[Prop.~8.5]{sav14}.  Although that reference assumes that the target Lie superalgebra $\g$ is basic classical, the proof of this result only requires $\g$ to be a simple finite-dimensional Lie superalgebra.  Note also that the statement of \cite[Prop.~8.5]{sav14} does not include the fact that the support of $V'$ is an element of $X_*$.  However, this property is demonstrated in the proof.
\end{proof}

\begin{thm} \label{thm:twisted-classification}
  Suppose $A$ is a finitely generated unital associative $\C$-algebra and $\Gamma$ is a finite abelian group acting on $A$ and $\q$ by automorphisms.  Furthermore, suppose that the induced action of $\Gamma$ on $\MaxSpec (A)$ is free.  Then the map
  \begin{equation} \label{eq:twisted-classification-map}
    \mathcal{E}(\q,A)^\Gamma \rightarrow {\mathcal R}(\q,A)^\Gamma,\quad \Psi \mapsto \widehat\ev_\Psi^\Gamma,
  \end{equation}
  is a bijection, where ${\mathcal R}(\q,A)^\Gamma$ is the set of isomorphism classes of irreducible finite-dimensional representations of $(\q\otimes A)^\Gamma$.
\end{thm}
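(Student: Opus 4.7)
The plan is to assemble the theorem from three ingredients already in place: the untwisted classification (Theorem~\ref{thm:untwisted-classification}), the restriction result (Proposition~\ref{prop:twisted-are-restrictions}), and the injectivity of the twisted evaluation map (Corollary~\ref{cor:twisted-eval-injective}). Injectivity of \eqref{eq:twisted-classification-map} is precisely Corollary~\ref{cor:twisted-eval-injective}, so the entire content of the proof is surjectivity.

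Let $V$ be an arbitrary irreducible finite-dimensional $(\q\otimes A)^\Gamma$-module. By Proposition~\ref{prop:twisted-are-restrictions}, $V$ extends to an irreducible finite-dimensional $\q\otimes A$-module $V'$ whose support $\sM$ lies in $X_*$. By Theorem~\ref{thm:untwisted-classification}, there exists $\Psi_0 \in \mathcal{E}(\q,A)$ with $V' \cong \widehat{\ev}_{\Psi_0}$; examining the construction of $\widehat{\ev}_{\Psi_0}$ together with Proposition~\ref{prop:eval-reduced-support}, one sees $\Supp(\Psi_0)\subseteq \sM$, so distinct elements of $\Supp(\Psi_0)$ lie in distinct $\Gamma$-orbits of $\MaxSpec(A)$.

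I would then promote $\Psi_0$ to a $\Gamma$-equivariant map $\Psi\in \mathcal{E}(\q,A)^\Gamma$ by the only reasonable definition: for $\sm\in \Supp(\Psi_0)$ and $\gamma\in\Gamma$ put $\Psi(\gamma\sm) := \gamma\,\Psi_0(\sm)$, and put $\Psi(\sm'):=0$ for $\sm'$ outside $\Gamma\cdot\Supp(\Psi_0)$. This is well-defined because $\Gamma$ acts freely on $\MaxSpec(A)$ and $\Supp(\Psi_0)$ meets each $\Gamma$-orbit at most once; it is $\Gamma$-equivariant by construction, and it has finite support since $\Gamma$ is finite. The set $\Supp(\Psi_0)$ then contains exactly one representative of each $\Gamma$-orbit in $\Supp(\Psi)$, so by the very definition of $\widehat{\ev}_\Psi^\Gamma$ and Lemma~\ref{lem:evaluation-invariance}, $\widehat{\ev}_\Psi^\Gamma$ is the restriction to $(\q\otimes A)^\Gamma$ of $\widehat{\ev}_{\Psi_0}$, which is $V'$; hence $\widehat{\ev}_\Psi^\Gamma \cong V'|_{(\q\otimes A)^\Gamma} = V$.

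The only genuine subtlety is checking that the extension $\Psi$ really produces the correct twisted evaluation module, i.e.\ that the $(\q\otimes A)^\Gamma$-module given by evaluating at the representatives $\Supp(\Psi_0)$ of the $\Gamma$-orbits in $\Supp(\Psi)$ coincides with the restriction of $V'\cong\widehat{\ev}_{\Psi_0}$. This is immediate from the definition of $\widehat{\ev}_\Psi^\Gamma$, which was tailored precisely so that the choice of orbit representatives does not matter; the freeness of the $\Gamma$-action on $\MaxSpec(A)$ is what guarantees the extension $\Psi$ is well-defined in the first place. No further analysis of highest weights or Clifford structures is required at this stage, since all the hard work was absorbed into Theorem~\ref{thm:untwisted-classification} and Proposition~\ref{prop:twisted-are-restrictions}.
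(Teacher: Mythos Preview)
Your proposal is correct and follows essentially the same approach as the paper: injectivity is Corollary~\ref{cor:twisted-eval-injective}, and surjectivity goes via Proposition~\ref{prop:twisted-are-restrictions}. The paper's own proof is a single sentence citing just these two results; you have simply spelled out the steps the paper leaves implicit, namely invoking Theorem~\ref{thm:untwisted-classification} to realize $V'$ as $\widehat{\ev}_{\Psi_0}$ and then extending $\Psi_0$ to a $\Gamma$-equivariant $\Psi$ along the $\Gamma$-orbits.
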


\begin{proof}
  The map \eqref{eq:twisted-classification-map} is surjective by Proposition~\ref{prop:twisted-are-restrictions}, while injectivity follows from Corollary~\ref{cor:twisted-eval-injective}.
\end{proof}



\newcommand{\doi}[1]{DOI: \href{http://dx.doi.org/#1}{\tt \nolinkurl{#1}}}

\end{document}